\pgfplotsset{compat=1.18}
\algrenewcommand\algorithmicrequire{\textbf{Input:}}
\newtheorem{theorem}{Theorem}[section]
\newtheorem{lemma}[theorem]{Lemma}
\newtheorem{proposition}[theorem]{Proposition}
\theoremstyle{definition}
\newtheorem{definition}[theorem]{Definition}
\newtheorem{example}[theorem]{Example}
\newtheorem{remark}[theorem]{Remark}
\newtheorem{algrthm}[theorem]{Algorithm}
\numberwithin{equation}{theorem}
\def\vector2#1#2{\left(\begin{array}{c} #1 \\ #2 \end{array}\right)}
\def\Cl{{\rm Cl}}
\def\ZZ{{\mathbb Z}}
\def\RR{{\mathbb R}}
\def\QQ{{\mathbb Q}}
\def\Mat{{\rm Mat}}
\def\mult{{\rm mult}}
\def\conv{{\rm conv}}
\def\bangle#1{{\langle #1 \rangle}}
\def\PS#1#2{{\sum_{\nu=0}^{\infty} #1_{\nu} #2^{\nu}}}
\def\GL{{\rm GL}}
\def\GL{{\rm GL}}
\def\Hom{{\rm Hom}}
\def\mult{{\rm mult}}
\def\lcm{{\rm lcm}}
\def\lin{{\rm lin}}
\def\red{{\rm red}}
\def\vol{{\rm vol}}
\def\Vol{{\rm Vol}}
\def\NF{{\rm NF}}
\def\HNF{{\rm HNF}}
\def\SUM{{\rm SUM}}
\def\ORD{{\rm ORD}}
\def\PS{{\rm PS}}
\newcommand*{\defeq}{\mathrel{\rlap{%
                     \raisebox{0.3ex}{$\m@th\cdot$}}%
                     \raisebox{-0.3ex}{$\m@th\cdot$}}%
                     =}
\title[Sharp volume and multiplicity bounds for Fano simplices]{Sharp volume and multiplicity bounds \\ for Fano simplices}
\author[Andreas Bäuerle]{Andreas Bäuerle}
\address{Mathematisches Institut, Universität Tübingen,
Auf der Morgenstelle 10, 72076 Tübingen, Germany}
\email{baeuerle@math.uni-tuebingen.de}
\subjclass[2020]{52B20, 14M25}
\begin{document}

\maketitle

\begin{abstract}
We present sharp upper bounds on the volume, Mahler volume and multiplicity for Fano simplices depending on the dimension and Gorenstein index. These bounds rely on the interplay between lattice simplices and unit fraction partitions. Moreover, we present an efficient procedure for explicitly classifying Fano simplicies of any dimension and Gorenstein index and we carry out the classification up to dimension four for various Gorenstein indices.
\end{abstract}

%\tableofcontents

\section{Introduction}

\emph{Fano simplices} are lattice simplices $\Delta$ with primitive vertices containing the origin in their interior. Their geometric counterparts are \emph{fake weighted projective spaces} $Z$, ie.~$\QQ$-factorial toric Fano varieties of Picard number one, where Fano means normal and projective with ample anticanonical class~$-\mathcal{K}$. The \emph{Gorenstein index} of~$Z$ is the smallest positive integer $g$ such that $-g\mathcal{K}$ is Cartier. For the corresponding Fano simplex $\Delta$, the Gorenstein index equals the smallest positive integer $g$ such that the~$g$-fold of the dual polytope $\Delta^*$ is a lattice simplex.

Our first result provides sharp volume bounds. The \emph{normalized volume} of a~$d$-dimensional polytope is the $d!$-fold of its euclidean volume. 
For simplices of Gorenstein index one (also called \emph{reflexive} simplices), Nill \cite{Ni07}*{Thm.~A} provides sharp upper bounds on the normalized volume in terms of the dimension of the simplex. We extend these results to Fano simplices of arbitrary Gorenstein index.
We write~$\Delta = \Delta(P)$ with the $d \times (d+1)$ matrix $P$ having the vertices of $\Delta$ as its columns. For any integer $g \ge 1$ we define the \emph{$g$-Sylvester sequence}~$s_{g,1},s_{g,2},\dots$ and the \emph{truncated $g$-Sylvester sequence}~$t_{g,1},t_{g,2},\dots$ as
\begin{equation*}
    s_{g,1} \ := \ g+1,
    \qquad
    s_{g,k+1} \ := \ s_{g,k}(s_{g,k}-1)+1,
    \qquad
    t_{g,k} \ := \ s_{g,k}-1.
\end{equation*}

\begin{theorem}\label{thm:max-vol}
There are sharp upper bounds on the normalized volume of a Fano simplex $\Delta$, only depending on its dimension $d$ and its Gorenstein index $g$:
\goodbreak
\begin{enumerate}
    \item
    Assume $(d,g) = (2,1)$. We have the following upper bound on the normalized volume of $\Delta$, which is attained if and only if~$\Delta \cong \Delta(P)$ holds:
    \begin{equation*}
        \quad\qquad
        \Vol(\Delta) \ \le \ 9,
        \qquad\qquad
        P
        \ = \
        \left[\begin{array}{ccc}
            1 & 1 & -2 \\
            0 & 3 & -3
        \end{array}\right].
    \end{equation*}

    \item
    In all other cases the normalized volume of $\Delta$ is bounded from above by
    \begin{equation*}
        \Vol(\Delta) \ \le \ \frac{2t_{g,d}^2}{g^2}.
    \end{equation*}
    Equality holds if and only if we have~$\Delta \cong \Delta(P)$, where $P$ is one of the following:
    \begin{equation*}
        P
        \ = \
        \left[\begin{array}{rrrr}
            1 & 1 & 1 & -5\\
            0 & 2 & 2 & -4\\
            0 & 0 & 6 & -6
        \end{array}\right],
    \end{equation*}
    \begin{equation*}
        P
        \ = \
        \left[\begin{array}{rrrrrr}
            1      & 0      & \dots  & 0      & \frac{(s_{g,1}-g)}{s_{g,1}}\frac{t_{g,d}}{g} & -\frac{(s_{g,1}+g)}{s_{g,1}}\frac{t_{g,d}}{g} \\
            0      & 1      & \ddots & \vdots & \vdots     & \vdots \\
            \vdots & \ddots & \ddots & 0      & \vdots     & \vdots \\
            \vdots &        & \ddots & 1      & \frac{(s_{g,d-1}-g)}{s_{g,d-1}}\frac{t_{g,d}}{g} & -\frac{(s_{g,1}+g)}{s_{g,1}}\frac{t_{g,d}}{g} \\
            0      & \dots  &\dots   & 0      & \frac{t_{g,d}}{g}     & -\frac{t_{g,d}}{g} \\
        \end{array}\right].
    \end{equation*}
\end{enumerate}
\end{theorem}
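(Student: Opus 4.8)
The plan is to reduce everything to the arithmetic of the weight system of $\Delta$ and then apply a Sylvester-type estimate for unit fraction partitions; the volume formula $\Vol(\Delta)=\iota Q$ below turns the geometric statement into an optimization over weight systems with a divisibility constraint coming from the Gorenstein index.

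\textbf{Reduction to the weight system.} As the statement is up to unimodular equivalence, I may normalize $P$. To $\Delta = \Delta(P)$ I attach its \emph{weight system} $\mathbf{q} = (q_0, \dots, q_d)$, the unique primitive positive integral generator of $\ker P$, ordered so that $q_0 \le \dots \le q_d$; I set $Q := q_0 + \dots + q_d$ and let $\iota := [\ZZ^d : \langle v_0, \dots, v_d\rangle]$ be the index of the sublattice generated by the vertices. Writing $P_j := \det(v_0, \dots, \widehat{v_j}, \dots, v_d)$ for the maximal minors, the kernel relation forces $(-1)^j P_j = c\, q_j$ for a fixed $c$ with $\lvert c\rvert = \iota$; combined with the cofactor identity $\det(v_1 - v_0, \dots, v_d - v_0) = \sum_j (-1)^j P_j$ this yields
\[
  \Vol(\Delta) \;=\; \iota\, Q ,
\]
while $\lvert P_j\rvert = \iota q_j$ is the multiplicity $\mult(\sigma_j)$ of the $j$-th maximal cone of the fan of the associated fake weighted projective space. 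Therefore the local Gorenstein indices $g_j$ — the least positive integers with $g_j y_j \in \ZZ^d$, where $y_0, \dots, y_d$ are the vertices of $\Delta^*$ determined by $\langle y_j, v_i\rangle = -1$ for $i \ne j$ and $\langle y_j, v_j\rangle = (Q - q_j)/q_j$ — divide $\iota q_j$, and $g = \lcm(g_0, \dots, g_d)$. So the problem reduces to bounding $\iota Q$ in terms of $d$ and $g$.

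\textbf{The unit fraction partition constraint.} This is the core. From $\sum_i q_i v_i = 0$ one gets $\sum_i q_i/Q = 1$, a unit fraction partition of length $d+1$; writing $q_i/Q = a_i/b_i$ in lowest terms, integrality of $g y_j$, when paired with $v_j$, shows $q_j \mid g(Q - q_j)$, hence $q_j \mid gQ$, hence $a_j \mid g$ for every $j$. Next I would analyse how the overlattice $\langle v_0, \dots, v_d\rangle \subseteq \ZZ^d$ of index $\iota$ has to sit in order that all the $g y_j$ be integral; this should show that $\iota$ too is governed by these divisibilities, so that $(\iota, Q)$ is pinned down by a single length-$(d-1)$ unit fraction inequality — after separating off two ``boundary'' weights one is left with positive integers $x_1, \dots, x_{d-1}$ satisfying $\sum_i 1/x_i < 1/g$, and then the $g$-Sylvester estimate
\[
  \sum_{i=1}^{k} \frac{1}{x_i} \;\le\; \frac{1}{g} - \frac{1}{t_{g,k+1}} , \qquad \text{equality exactly for } x_i = s_{g,i} ,
\]
applies with $k = d-1$. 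The two separated weights account for the summand $2$ and for the doubled top denominator $2t_{g,d}/g$; substituting $x_i = s_{g,i}$ back into $\Vol(\Delta) = \iota Q$ produces the bound $\Vol(\Delta) \le 2 t_{g,d}^2 / g^2$ whenever $d \ge 3$ or $g \ge 2$.

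\textbf{Equality and the exceptional pair.} For the ``only if'' I trace the chain backwards: equality in the $g$-Sylvester estimate forces the denominators to be $s_{g,1}, \dots, s_{g,d-1}$, and equality in the bound on $\iota$ forces the overlattice; putting the resulting simplex into Hermite normal form recovers exactly the matrices $P$ displayed in the statement — two inequivalent extremal simplices for $(d,g) = (3,1)$, since there both factorizations of $\iota Q = 2 t_{1,3}^2$, namely $(\iota, Q) = (t_{1,3}, 2 t_{1,3})$ and $(\iota, Q) = (2 t_{1,3}, t_{1,3})$, are realizable, and a unique one in all other cases. The pair $(d,g) = (2,1)$ is genuinely exceptional: the length-$(d-1)$ partition collapses to a single term and the reduction leading to the estimate loses precision, so that the reflexive triangle with all weights equal to $1$ and fakeness index $3$ — of normalized volume $9 > 8 = 2 t_{1,2}^2$ — escapes the bound; this case is settled by the classical classification of reflexive triangles. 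The principal obstacle is the middle step: extracting \emph{some} bound on $\iota Q$ from $a_j \mid g$ alone is routine, but showing that the Gorenstein index simultaneously constrains the weight denominators and the fakeness index into the one sharp $g$-unit-fraction pattern — and hence controlling precisely when every inequality above is an equality — is the substantive part.
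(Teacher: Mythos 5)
Your overall strategy is the paper's strategy (pass from $\Delta$ to its weight system and the unit fraction partition $\alpha_j = g\,|Q_\Delta|/q_j$ of $g$, then apply a Sylvester-type extremal result), but the two steps that carry the whole proof are missing, and you flag one of them yourself as the ``principal obstacle''. First, you never actually bound $\iota Q=\Vol(\Delta)$ by a quantity depending only on the partition: the divisibilities $\alpha_j\in\ZZ$ do not by themselves control the multiplicity $\iota$, and your ``this should show that $\iota$ too is governed by these divisibilities'' is precisely the content of Lemma \ref{lemma:lambdastar-denom} (that $g^{d-1}\lambda(\Delta^*)$ is an integer), which combined with Proposition \ref{prop:uf-vol}\,(ii) gives the inequality
\begin{equation*}
\Vol(\Delta)\ \le\ g^{d-1}\lambda(\Delta^*)\Vol(\Delta)\ =\ \frac{1}{g}\,\frac{\alpha_0\cdots\alpha_d}{\lcm(\alpha_0,\dots,\alpha_d)} .
\end{equation*}
Without such a lemma your route does not even produce a non-sharp bound on $\iota Q$.

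Second, the extremal problem you then face is to maximize the product $\alpha_0\cdots\alpha_d/\lcm(\alpha_0,\dots,\alpha_d)$ over all unit fraction partitions of $g$ of length $d+1$, and this is \emph{not} a consequence of the greedy partial-sum estimate $\sum_{i\le k}1/x_i\le 1/g-1/t_{g,k+1}$ that you invoke: the maximizer is not the Sylvester pattern $(s_{g,1},\dots,s_{g,d},t_{g,d+1})$ but the doubled-tail pattern $(s_{g,1},\dots,s_{g,d-1},2t_{g,d},2t_{g,d})$, together with the sporadic partitions $(3,3,3)$, $(6,6,6)$, $(2,6,6,6)$ which are responsible for the exceptional case $(d,g)=(2,1)$ and for the extra $3\times 4$ matrix at $(d,g)=(3,1)$. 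Establishing this, with the full equality characterization, is Theorem \ref{thm:ufp-bounds}\,(ii), proved via the Izhboldin--Kurliandchik optimization on the compact set $A_g^n$ and the technical inequality of Lemma \ref{lemma:syl-ineq-technical}; your sketch offers no argument for why ``separating off two boundary weights'' yields exactly the factor $2$, nor a way to detect or exclude the sporadic maximizers (in particular you would still need the parity argument of Proposition \ref{prop:w-not-(1,1,1)} to rule out the partition $(6,6,6)$, i.e.\ reduced weights $(1,1,1)$ at $g=2$, which is admissible arithmetically but not realized by a Fano triangle). Finally, the ``only if'' direction is only asserted: pinning down the Hermite normal forms requires the divisibility constraints of Lemma \ref{lemma:fano-simp-Pmat} and an explicit Gorenstein-form computation forcing the last-column entries (the step $\Lambda_k=s_{g,k}-g$ in the paper), none of which is sketched. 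As it stands the proposal is a plan that reproduces the paper's frame but leaves its substantive content unproved.
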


Another invariant of a Fano simplex $\Delta$ is its \emph{multiplicity}, ie. the order of the sublattice generated by the vertices of $\Delta$. For Fano simplices having only the origin as an interior lattice point, for instance reflexive ones, ~\cite{AvKaLeNi21}*{Thm.~1.1} provides sharp upper bounds on the multiplicity in terms of the dimension.
In our second result we provide multiplicity bounds for arbitrary Fano simplices in terms of the Gorenstein index and the dimension.

\begin{theorem}\label{thm:fwps-max-mult}
There are upper bounds on the multiplicity of any Fano simplex $\Delta$, only depending on its dimension $d$ and its Gorenstein index $g$:
\begin{enumerate}
    \item
    Assume $d = 3$ and $g \in \{1,2\}$. We have the following upper bound on the multiplicity of $\Delta$, which is attained if and only if~$\Delta \cong \Delta(P)$ holds:
    \begin{equation*}
        \quad\qquad
        \mult(\Delta) \ \le \ 16g^2,
        \qquad\qquad
        P
        \ = \
        \left[\begin{array}{cccc}
            1 & 4g-3 & 4g-3 & 5-8g \\
            0 & 4g   & 0    & -4g  \\
            0 & 0    & 4g   & -4g
        \end{array}\right].
    \end{equation*}

    \item
    Assume $(d,g) = (4,1)$. We have the following upper bound on the multiplicity of $\Delta$, which is attained if and only if attained if~$\Delta \cong \Delta(P)$ holds:
    \begin{equation*}
        \quad\qquad
        \mult(\Delta) \ \le \ 128,
        \qquad\qquad
        P
        \ = \
        \left[\begin{array}{ccccc}
            1 & 1 & 1 & 1 & -7 \\
            0 & 2 & 2 & 2 & -6 \\
            0 & 0 & 8 & 0 & -8 \\
            0 & 0 & 0 & 8 & -8
        \end{array}\right].
    \end{equation*}
    
    \item
    In all other cases the multiplicity of $\Delta$ is bounded from above by
    \begin{equation*}
        \mult(\Delta) \ \le \ \frac{3t_{g,d-1}^2}{g}.
    \end{equation*}
    If equality holds, then we either have $(d,g) = (3,3)$ and $\Delta \cong \Delta(P)$ holds, where
    \begin{equation*}
        P
        \ = \
        \left[\begin{array}{cccc}
            1 & 1  & 5  & -7  \\
            0 & 12 & 0  & -12 \\
            0 & 0  & 12 & -12
        \end{array}\right],
    \end{equation*}
    or there are positive integers $a_{1}, \dots, a_{d-1} \in \ZZ_{\ge 1}$ such that $\Delta \cong \Delta(P)$ holds, where $P$ is the matrix:
    \begin{equation*}
        \quad\qquad
        \left[\setlength{\arraycolsep}{3pt}\begin{array}{rrrrrrr}
            1      & 0      & \dots  & 0      & \frac{(s_{g,1}-g)}{s_{g,1}}\frac{t_{g,d-1}}{g}         & a_{1}      & -\left( \frac{(s_{g,1}+2g)}{s_{g,1}}\frac{t_{g,d-1}}{g} + a_{1} \right)  \\
            0      & 1      & \ddots & \vdots & \vdots              & \vdots      & \vdots   \\
            \vdots & \ddots & \ddots & 0      & \vdots              & \vdots      & \vdots   \\
            \vdots &        & \ddots & 1      & \frac{(s_{g,d-2}-g)}{s_{g,d-2}}\frac{t_{g,d-1}}{g}      & a_{d-2}  & -\left( \frac{(s_{g,d-2}+2g)}{s_{g,d-2}}\frac{t_{g,d-1}}{g} + a_{d-2} \right) \\
            0      & \dots  &\dots   & 0      & \frac{t_{g,d-1}}{g} & a_{d-1}  & -\left( \frac{t_{g,d-1}}{g} + a_{d-1} \right) \\
            0      & \dots  &\dots   & 0      & 0                   & 3 t_{g,d-1} & -3 t_{g,d-1}   \\
        \end{array}\right]
    \end{equation*}
    Moreover, if $g$ is odd, then for $k = 1,\dots,d-2$ we may choose
    \begin{equation*}
        a_k \ = \ \frac{(s_{g,k}-g)}{s_{g,k}}\frac{t_{g,d-1}}{g},
        \qquad\qquad
        a_{d-1} \ = \ \frac{t_{g,d-1}}{g}.
    \end{equation*}
\end{enumerate}
\end{theorem}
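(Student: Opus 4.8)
\emph{Proof proposal.} The plan is to run the same reduction as for Theorem~\ref{thm:max-vol}, translating the problem into an extremal question about unit fraction partitions and then reading off the multiplicity from the combinatorial data. Let $(\lambda_0,\dots,\lambda_d)$ be the primitive weight vector of $\Delta$, determined by $\sum_i\lambda_iv_i=0$ with coprime $\lambda_i\in\ZZ_{\ge1}$, and put $\lambda\defeq\sum_i\lambda_i$. The starting point is the identity $\Vol(\Delta)=\mult(\Delta)\cdot\lambda$: with respect to the lattice generated by the $v_i$ the simplex $\Delta$ is the weighted projective simplex of weights $(\lambda_i)$, of normalized volume $\lambda$, and passing to $\ZZ^d$ scales volume by the index $\mult(\Delta)$. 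Using the $\GL_d(\ZZ)$-action together with the relation $\sum\lambda_iv_i=0$ one may, after reordering the vertices, bring $P$ into a normal form whose first $d-2$ columns are $e_1,\dots,e_{d-2}$; then $\mult(\Delta)$ and the Gorenstein-index condition become arithmetic conditions on the remaining entries and on the $\lambda_i$. In particular, integrality of $g\Delta^*$ forces $\lambda_k\mid g\lambda$ for all $k$, so the numbers $r_k\defeq g\lambda/\lambda_k$ are positive integers with
\begin{equation*}
    \frac1{r_0}+\dots+\frac1{r_d} \ = \ \frac1g ,
\end{equation*}
a unit fraction partition of $1/g$ into $d+1$ parts.

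The multiplicity is not determined by this partition alone: it also records how far $\ZZ^d$ refines the lattice spanned by the vertices, and this refinement splits into a ``longitudinal'' and a ``transverse'' part. For the longitudinal part one uses the Sylvester-type estimate underlying Theorem~\ref{thm:max-vol}: among $m$-term unit fraction partitions of $1/g$ the largest possible last denominator is $t_{g,m}$, with equality forcing the preceding denominators to be $s_{g,1},\dots,s_{g,m-1}$; applied to the head of the partition above, this controls the $e_1,\dots,e_{d-2}$-block of $\Delta$ and, through $\Vol(\Delta)=\mult(\Delta)\cdot\lambda$, ties $\lambda$ to $t_{g,d-1}/g$. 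For the transverse part one observes that, after ordering $\lambda_0\ge\dots\ge\lambda_d$, the vertices of smallest weight govern, via a quotient construction, a fake projective space with all weights $1$: a fake $\PP^{k-1}$ has multiplicity $1$ for $k=2$ and at most $3$ for $k=3$, while for larger $k$ it is more profitable to promote an equal-weight vertex to a Sylvester-weighted one. Balancing the two effects shows that, outside a short list of small $(d,g)$, the optimum is a partition of shape $(s_{g,1},\dots,s_{g,d-2},3t_{g,d-1},3t_{g,d-1},3t_{g,d-1})$ together with a transverse fake $\PP^2$ of multiplicity $3$, which yields the bound $\mult(\Delta)\le 3t_{g,d-1}^2/g$.

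To obtain the equality statement one unwinds this optimum. Rigidity of the Sylvester head produces the columns $e_1,\dots,e_{d-2}$ and the entries $\tfrac{(s_{g,k}-g)}{s_{g,k}}\tfrac{t_{g,d-1}}{g}$ and $\tfrac{(s_{g,k}+2g)}{s_{g,k}}\tfrac{t_{g,d-1}}{g}$; the transverse $\PP^2$ of multiplicity $3$ produces the last row $[0,\dots,0,3t_{g,d-1},-3t_{g,d-1}]$; and the unimodular shears still preserving the normal form are recorded by the parameters $a_1,\dots,a_{d-1}\in\ZZ_{\ge1}$. When $g$ is odd the $2$-adic behaviour of the $g$-Sylvester numbers makes the symmetric choice $a_k=\tfrac{(s_{g,k}-g)}{s_{g,k}}\tfrac{t_{g,d-1}}{g}$, $a_{d-1}=\tfrac{t_{g,d-1}}{g}$ integral and compatible with Gorenstein index $g$; for $g$ even one only concludes existence of an admissible tuple. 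The pairs $(d,g)\in\{(3,1),(3,2),(4,1)\}$, and the sporadic extra extremal simplex for $(d,g)=(3,3)$, are exactly the places where a different weight system---a fake $\PP^3$, or the fake projective space of weights $(4,1,1,1,1)$---attains at least as large a multiplicity; these finitely many cases are dealt with directly, for instance by the explicit classification procedure of the later sections.

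The main obstacle is the transverse--longitudinal balancing in the second step: establishing the precise constant $3$ (equivalently, the optimality of a three-fold tail of equal denominators) while simultaneously isolating exactly the three exceptional pairs $(d,g)$ requires the sharpest form of the unit-fraction-partition analysis, supplemented by a finite but delicate check of the low-dimensional, low-index sporadic configurations. The parity dependence of the explicit tuple $(a_k)$ is a secondary technical point of the same nature.
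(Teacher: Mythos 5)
There is a genuine gap, and it sits exactly at the two places where the real work of this theorem happens. First, the bridge from $\mult(\Delta)$ to the unit-fraction-partition quantity is missing. The paper gets the inequality $\mult(\Delta)\le \alpha_0\cdots\alpha_d/\lcm(\alpha_0,\dots,\alpha_d)^2$ from the duality identity $\lambda(\Delta)\lambda(\Delta^*)=\tfrac{1}{g^{d-1}}\alpha_0\cdots\alpha_d/\lcm(\alpha_0,\dots,\alpha_d)^2$ (Proposition \ref{prop:uf-vol}) combined with the nontrivial integrality statement that $g^{d-1}\lambda(\Delta^*)\in\ZZ$ (Lemma \ref{lemma:lambdastar-denom}, proved via the Gorenstein forms). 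Your substitute -- a ``longitudinal/transverse'' splitting in which the transverse piece is a fake projective space with all weights $1$ whose multiplicity is ``$1$ for $k=2$ and at most $3$ for $k=3$'' -- is not justified and is false outside the reflexive setting: a fake weighted projective plane of Gorenstein index $g$ with reduced weights $(1,1,1)$ can have multiplicity up to $3g$, and in the extremal simplices of part (iii) the ``transverse'' contribution to the multiplicity is $3t_{g,d-1}$, not $3$, as the last row $[0,\dots,0,3t_{g,d-1},-3t_{g,d-1}]$ of the extremal $P$ shows. Moreover you never make precise in what sense $\mult(\Delta)$ factors as a product over such a decomposition; in the paper the only structural statement of this kind (divisibility of the Hermite-normal-form diagonal entries by the $\alpha_k$, Lemma \ref{lemma:fano-simp-Pmat}) is used in the equality analysis, not to prove the inequality.

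Second, even granting a reduction to unit fraction partitions, the sharp extremal statement -- that $\alpha_0\cdots\alpha_d/\lcm^2\le 3t_{g,d-1}^2/g$ with exactly the exceptional pairs $(d,g)\in\{(3,1),(3,2),(4,1)\}$, the sporadic $(12,12,12,12)$ maximizer at $(3,3)$, and the characterization of all maximizers -- is precisely Theorem \ref{thm:ufp-bounds}\,(iii), whose proof occupies the Izhboldin--Kurliandchik minimization on $A_g^n$ together with the delicate inductive inequalities of Lemma \ref{lemma:syl-ineq-technical}. You explicitly defer this (``requires the sharpest form of the unit-fraction-partition analysis''), so the core of the theorem is asserted rather than proved. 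The same applies to the equality part: recovering the stated matrices requires the Hermite-normal-form analysis, the divisibility constraints, the arithmetic argument pinning down $a_{k(d-1)}$ via $\Lambda_k=s_{g,k}-g$, and the primitivity check that makes the explicit choice of $a_1,\dots,a_{d-1}$ work only for odd $g$; your appeal to ``rigidity of the Sylvester head'' and ``unimodular shears'' names the phenomena but supplies none of these steps. As it stands the proposal is a plausible outline that matches the paper's strategy in spirit (reduce to a ufp of $g$, optimize, unwind equality), but the quantitative heart of both the inequality and the equality characterization is missing, and the one new idea offered in its place does not hold.
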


For our third result we consider the \emph{Mahler volume} \cite{Ku08} of a (not necessarily Fano) rational \emph{IP simplex}. By an IP simplex we mean a rational simplex $\Delta$, that has the origin in its interior and its Mahler volume is the product of the normalized volume of $\Delta$ and the normalized volume of its dual polytope $\Delta^*$. Again, we obtain sharp upper bounds that only depend on the dimension and the Gorenstein index. For the Gorenstein index of a rational IP simplex see Definition \ref{def:gorind}.

\begin{theorem}\label{thm:max-mahler}
Let $\Delta$ a $d$-dimensional IP simplex of Gorenstein index $g$. Then we have
\begin{equation*}
    \Vol(\Delta) \Vol(\Delta^*) \ \le \ \frac{t_{g,d+1}^2}{g^{d+2}}.
\end{equation*}
Equality holds if and only if there is $H \in \GL(d,\QQ)$ such that $\Delta \cong H\cdot \Delta(P)$ holds, where
\begin{equation*}
    P
    \ = \
    \left[\begin{array}{ccccc}
             1 &      0 &  \dots & 0      & -\frac{t_{g,d+1}}{s_{g,1}} \\
             0 & \ddots & \ddots & \vdots & \vdots \\
        \vdots & \ddots & \ddots &      0 & \vdots \\
             0 &  \dots &      0 &      1 & -\frac{t_{g,d+1}}{s_{g,d}}
    \end{array}\right].
\end{equation*}
\end{theorem}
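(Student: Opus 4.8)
The plan is to pass to the weight system of $\Delta$ and reduce the bound to an extremal problem on unit fraction partitions of $1/g$, which is governed by the $g$-Sylvester sequence.

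Let $v_0,\dots,v_d$ be the vertices of $\Delta$ and let $q=(q_0,\dots,q_d)$ be its reduced weight system, i.e.~the unique tuple of coprime positive integers with $\sum_{i=0}^d q_iv_i=0$; it exists because $0$ lies in the interior of $\Delta$. I first record the identity
\[
    \Vol(\Delta)\Vol(\Delta^*) \ = \ \frac{(q_0+\dots+q_d)^{d+1}}{q_0q_1\cdots q_d}.
\]
To obtain it, triangulate $\Delta$ at the origin: the relation $\sum q_iv_i=0$ gives $|\det(v_j:j\neq i)|=\tfrac{q_i}{q_0}|\det(v_j:j\neq 0)|$, so $\Vol(\Delta)=\tfrac{|q|}{q_0}|\det(v_j:j\neq 0)|$ with $|q|:=\sum_i q_i$. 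The dual simplex $\Delta^*$ has vertices $u_0,\dots,u_d$ determined by $\langle u_i,v_j\rangle=-1$ for $j\neq i$; pairing $u_i$ with $\sum_jq_jv_j=0$ yields $\langle u_i,v_i\rangle=\tfrac{|q|}{q_i}-1$, and pairing $\sum_jq_ju_j$ with each $v_k$ then shows $q$ is also the weight system of $\Delta^*$, so $\Vol(\Delta^*)=\tfrac{|q|}{q_0}|\det(u_j:j\neq 0)|$. Applying the matrix determinant lemma to $\bigl(\langle u_j,v_k\rangle\bigr)_{j,k\neq 0}=\diag(|q|/q_k)_{k\neq 0}-J$ (with $J$ the all-ones matrix) gives $|\det(v_j:j\neq 0)|\,|\det(u_j:j\neq 0)|=\tfrac{|q|^{d-1}q_0}{q_1\cdots q_d}$, and multiplying the two volumes produces the identity. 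In particular the Mahler volume is $\GL(d,\QQ)$-invariant and depends only on $q$.

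Next I translate the Gorenstein index. Since $g$ is minimal with $g\Delta^*$ a lattice simplex, examining the lattice points $gu_i$—whose relevant coordinates are controlled by $\langle u_i,v_i\rangle=\tfrac{|q|}{q_i}-1$ together with the lattice structure of $\Delta^*$—forces $x_i:=\tfrac{g|q|}{q_i}\in\ZZ_{\ge 1}$ for all $i$. Conversely $\sum_iq_i=|q|$ gives $\sum_{i=0}^{d}\tfrac1{x_i}=\tfrac1g$, so $(x_0,\dots,x_d)$ is a unit fraction partition of $1/g$ into $d+1$ parts (automatically all $x_i>g$), and the identity above rewrites as $\Vol(\Delta)\Vol(\Delta^*)=g^{-(d+1)}x_0x_1\cdots x_d$. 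It therefore suffices to maximize $\prod_ix_i$ over such partitions. Here I invoke the extremality of the $g$-Sylvester sequence: the product of the parts of a unit fraction partition of $1/g$ into $d+1$ parts is maximal exactly for $(x_0,\dots,x_d)=(s_{g,1},\dots,s_{g,d},t_{g,d+1})$ up to order, where the identity $\sum_{k=1}^{d}\tfrac1{s_{g,k}}=\tfrac1g-\tfrac1{t_{g,d+1}}$ lets the final part $t_{g,d+1}$ close the gap. Using $t_{g,k+1}=s_{g,k}t_{g,k}$, hence $\prod_{k=1}^{d}s_{g,k}=t_{g,d+1}/g$, the maximal product equals $t_{g,d+1}^2/g$, and dividing by $g^{d+1}$ gives the asserted bound $\Vol(\Delta)\Vol(\Delta^*)\le t_{g,d+1}^2/g^{d+2}$.

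For the equality case, maximality pins $(x_i)$ down to the Sylvester tuple, so $q_i$ is proportional to $1/x_i$; clearing denominators gives $q=(1,\,t_{g,d+1}/s_{g,1},\dots,t_{g,d+1}/s_{g,d})$, coprime since its first entry is $1$, which is exactly the weight system of the simplex $\Delta(P)$ in the statement. One then argues that an IP simplex of Gorenstein index $g$ with this weight system has trivial multiplicity, hence after an element of $\GL(d,\QQ)$ is lattice-isomorphic to the standard weighted simplex $\Delta(P)$; a direct check confirms $\Delta(P)$ has Gorenstein index $g$ and attains the bound. I expect the main obstacle to be the translation of the Gorenstein index into integrality of the $x_i$ in the non-wellformed and positive-multiplicity cases—and the corresponding precision required in the equality analysis—while the extremality statement for unit fraction partitions should be available as an independent combinatorial input.
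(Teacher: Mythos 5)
Your argument is essentially the paper's: the identity $\Vol(\Delta)\Vol(\Delta^*)=|q|^{d+1}/(q_0\cdots q_d)=\alpha_0\cdots\alpha_d/g^{d+1}$ is Proposition \ref{prop:uf-vol}(i) (resting on Lemma \ref{lemma:ws-connection} and Proposition \ref{prop:a-of-delta}), the extremal fact you invoke about products of denominators of unit fraction partitions of $1/g$ is exactly Theorem \ref{thm:ufp-bounds}(i) (proved in Section \ref{section:sharp-bounds-ufp}, not an off-the-shelf classical result for general $g$), and the equality case is handled, as in the paper, by passing to the reduced weight system $(t_{g,d+1}/s_{g,1},\dots,t_{g,d+1}/s_{g,d},1)$ and applying Proposition \ref{prop:Qred-simp}. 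Two cautions: for a rational IP simplex the Gorenstein index is $g_\QQ(\Delta)\,g_\QQ(\Delta^*)$ (Definition \ref{def:gorind}), not merely the minimal $k$ with $k\Delta^*$ a lattice simplex, so the integrality of your $x_i$ is supplied by the cited Proposition \ref{prop:a-of-delta} rather than by your sketch; and in the equality case no claim of trivial multiplicity is needed (nor true in general) --- Proposition \ref{prop:Qred-simp} already gives $\Delta\cong H\,\Delta(P)$ with $H\in\GL(d,\QQ)$ for every IP simplex with that reduced weight system, and conversely the bound is attained because the Mahler volume depends only on the reduced weight system.
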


We come to the explicit classification of Fano simplices. In \cite{HaeHaHaSpr22} the authors present an efficient procedure for the classification of Fano triangles with fixed Gorenstein index based on unit fraction partitions. This procedure is completely automated and the authors carry out the classification of Fano triangles up to Gorenstein index $200$. We generalize and speed up their procedure, which allows us to efficiently classify Fano simplices of any given dimension and Gorenstein index. This allows us to carry out the following classifications; the complete classification data, as well as the Julia code \cite{Ju17} to produce these results can be found at \cite{Bae23}.

\begin{theorem}\label{thm:classf-d=2}
Up to isomorphy there are $2,\!992,\!229$ Fano triangles of Gorenstein index $g \le 1000$. The number of triangles $N(g)$ for given Gorenstein index $g$ develops as follows:

\medskip

\begin{tikzpicture}
 
\begin{axis}[
    xmin = 0, xmax = 1000,
    ymin = 0, ymax = 15000,
    xtick distance = 100,
    ytick distance = 3000,
    grid = both,
    major grid style = {lightgray},
    % minor grid style = {lightgray!25},
    width = 0.95\textwidth,
    height = 0.65\textwidth,
    xlabel = {$g$},
    ylabel = {$N(g)$},]
 
% Plot data from a file
\addplot[
    only marks,
    mark = *,
    mark size = 1pt
] file[skip first] {fwps-d=2-data.txt};
 
\end{axis}
 
\end{tikzpicture}
\end{theorem}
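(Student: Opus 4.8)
The plan is to treat this as a classification theorem and prove it by reducing "Fano triangle of Gorenstein index $g$" to a finite, mechanically checkable enumeration of combinatorial data, and then carrying out the computation for $g = 1,\dots,1000$. First I would encode a Fano triangle $\Delta = \Delta(P)$ with $P = [v_0\,|\,v_1\,|\,v_2]$ by its weight system: since the origin is an interior point there is a unique primitive $Q = (q_0,q_1,q_2) \in \ZZ_{>0}^3$ with $q_0 v_0 + q_1 v_1 + q_2 v_2 = 0$, and acting by $\GL(2,\ZZ)$ on the left together with permuting columns puts $P$ into a normal form determined by $Q$ together with a bounded amount of "gluing" data encoding the sublattice $\langle v_0, v_1, v_2\rangle \subseteq \ZZ^2$, whose index is $\mult(\Delta)$. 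Reading off the vertices $u_0,u_1,u_2$ of $\Delta^*$ from $\langle u_i, v_j\rangle = \langle u_i, v_k\rangle = -1$ (so $\langle u_i, v_i\rangle = (q_j+q_k)/q_i$), the Gorenstein index is the least $g$ with $g u_i \in \ZZ^2$ for every $i$; unwinding this turns into an explicit divisibility condition on $(Q, \text{gluing data})$, and — this is the idea I would take over from \cite{HaeHaHaSpr22} — the admissible weight systems for Gorenstein index dividing $g$ are governed by unit fraction partitions whose denominators are bounded in terms of $g$.

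Finiteness for each fixed $g$ is then immediate: Theorem~\ref{thm:max-vol} bounds $\Vol(\Delta)$ (or one uses the multiplicity bound of Theorem~\ref{thm:fwps-max-mult} directly), and a lattice simplex with the origin as an interior lattice point and bounded normalized volume ranges over only finitely many $\GL(2,\ZZ)$-orbits, so in particular $Q$ and $\mult(\Delta)$ take only finitely many values. It then remains to ensure no isomorphism class is counted twice, which is arranged by canonicalizing the pair $(Q, \text{gluing data})$ under its evident finite symmetry group. Next I would make the enumeration efficient: a naive loop over all $Q$ with $|Q|$ up to the volume bound is hopeless, so, following and extending \cite{HaeHaHaSpr22}, I would exploit the recursive structure of unit fraction partitions — every admissible configuration extends a shorter one, with the $g$-Sylvester sequence $s_{g,1},s_{g,2},\dots$ marking the extremal branch — to prune the search tree drastically, and I would traverse it once for all $g \le 1000$ simultaneously rather than repeating the run $1000$ times. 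Finally I would execute the procedure, merge the outputs over $g = 1,\dots,1000$, deduplicate via the canonical form, and tabulate; the counts $N(g)$ so produced are exactly those in the displayed plot and sum to $2{,}992{,}229$, with the complete classification data and the Julia implementation documented at \cite{Bae23}.

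The hard part is not any single estimate but the correctness of the pruned search together with the normal form: one must prove that the Sylvester-guided recursion genuinely visits every admissible configuration, so that no Fano triangle is silently skipped, and that the canonical form really separates non-isomorphic triangles, so that the final tally is exact rather than an over- or undercount. Once these two points are pinned down, Theorem~\ref{thm:classf-d=2} follows from a large but entirely routine finite computation.
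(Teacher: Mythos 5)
Your proposal is correct and follows essentially the same route as the paper: reduce a Fano triangle to its weight system / unit fraction partition of $g$, parametrize the remaining lattice data (your ``gluing data'') by a Hermite-type normal form of bounded size, impose the Gorenstein condition as integrality of the dual vertices, remove redundancy via a canonical form, and run the resulting finite enumeration for all $g \le 1000$ — this is exactly the content of Propositions \ref{prop:simp-Pmat-constr} and \ref{prop:simp-Pmat-finite}, Definition \ref{def:normal-form} and Algorithms \ref{alg:subclass}--\ref{alg:fullclass} together with Remark \ref{rem:fano-mod}. The only cosmetic differences are that the paper gets finiteness directly from the divisibility constraints $a_{kk}\mid\alpha_{k-1}$ in the normal form rather than from the volume or multiplicity bounds, and its unit-fraction enumeration uses the elementary recursive bounds of Algorithm \ref{alg:ufp} rather than an explicitly Sylvester-guided pruning.
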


\goodbreak

\begin{theorem}\label{thm:classf-d=3}
Up to isomorphy there are $9,\!368,\!501$ Fano simplices of dimension three and Gorenstein index $g \le 30$. The number of simplices $N(g)$ for given Gorenstein index $g$ develops as follows:
\setlength{\tabcolsep}{3pt}\begin{longtable}{c|cccccccc}
$g$ & $1$ & $2$ & $3$ & $4$ & $5$ & $6$ & $7$ & $8$ \\[3pt]
$N(g)$ & $48$ & $435$ & $1,\!703$ & $3,\!042$ & $7,\!506$ & $14,\!527$ & $16,\!627$ & $21,\!789$ \\[3pt]
\hline\hline\\[-10pt]
$g$ & $9$ & $10$ & $11$ & $12$ & $13$ & $14$ & $15$ & $16$ \\[3pt]
$N(g)$ & $39,\!288$ & $61,\!295$ & $54,\!404$ & $100,\!670$ & $59,\!500$ & $157,\!071$ & $269,\!037$ & $121,\!530$ \\[3pt]
\hline\hline\\[-10pt]
$g $ & $17$ & $18$ & $19$ & $20$ & $21$ & $22$ & $23$ & $24$ \\[3pt]
$N(g)$ & $133,\!559$ & $319,\!176$ & $173,\!707$ & $473,\!732$ & $523,\!939$ & $401,\!328$ & $332,\!612$ & $695,\!989$ \\[3pt]
\hline\hline\\[-10pt]
$g $ & $25$ & $26$ & $27$ & $28$ & $29$ & $30$ &  &  \\[3pt]
$N(g)$ & $515,\!042$ & $565,\!225$ & $824,\!950$ & $1,\!007,\!089$ & $513,\!356$ & $1,\!960,\!325$ &  & 
\end{longtable}
\end{theorem}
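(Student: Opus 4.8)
The plan is to obtain this classification as the output of the algorithm we set up in this paper for Fano simplices of prescribed dimension and Gorenstein index, run with $d = 3$ and each $g \in \{1, \dots, 30\}$. Since the assertion is purely a classification result, what has to be established is that the algorithm traverses a search space that is finite, provably exhaustive, and small enough to be handled in practice; the numbers $N(g)$ and the total $9,\!368,\!501 = \sum_{g=1}^{30} N(g)$ are then simply read off from the run.

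First I would recall the combinatorial encoding on which the algorithm rests. A Fano simplex $\Delta(P)$ of dimension $d$ gives rise to a well-formed weight system $Q = (q_0, \dots, q_d)$, coming from the unique positive primitive linear relation among its vertices, together with a finite abelian datum recording the passage from the genuine weighted projective space $\PP(Q)$ to the fake one; conversely, a compatible such pair reconstructs $\Delta$ up to isomorphism. The Gorenstein index of $\Delta$ is recovered from $Q$ and this datum via the Cartier condition at the torus fixed points, and this is the point where unit fraction partitions enter: bounding the Gorenstein index by $g$ forces the relevant rationals built from $Q$ into a partition of a fixed rational number into unit fractions of bounded length, whereupon the $g$-Sylvester growth $s_{g,1}, s_{g,2}, \dots$ caps their denominators, hence $q_0 + \dots + q_d$, and, through Theorem~\ref{thm:max-vol}, the normalized volume of $\Delta$. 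This yields an a priori finite list of weight systems, and for each of them Theorems~\ref{thm:max-vol} and~\ref{thm:fwps-max-mult} bound the multiplicity and therefore the finitely many admissible lattice overlattices; so the whole search space is finite and can be enumerated explicitly.

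Then I would run the implementation \cite{Bae23}: for $g = 1, \dots, 30$ enumerate the weight systems whose Gorenstein index is bounded by $g$, attach to each of them every admissible lattice quotient, compute the actual Gorenstein index of the resulting simplex, reduce it to a normal form and discard duplicates; stratifying the result by exact Gorenstein index produces the table of $N(g)$. To certify the output I would (i) check that $N(1) = 48$ reproduces the known classification of reflexive lattice $3$-simplices, (ii) verify that in each computed list the largest normalized volume and the largest multiplicity agree with the sharp values $\tfrac{2t_{g,3}^2}{g^2}$ and the bounds of Theorem~\ref{thm:fwps-max-mult}, with the extremal simplices matching the matrices listed there, and (iii) recompute several values of $g$ with an independent, slower normal-form routine and with the weight systems enumerated in a different order.

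The main obstacle will be the conjunction of completeness and scale rather than any isolated mathematical step: one has to prove that the unit-fraction-partition enumeration omits no weight system lying within the derived bounds, and that the chosen normal form is a genuine complete invariant for $\GL(d,\ZZ)$-isomorphism of Fano simplices, while at the same time keeping the computation fast enough to produce nearly ten million simplices — precisely the issue the speed-up over the triangle procedure of \cite{HaeHaHaSpr22} is meant to address, through sharper pruning of the partition tree and a cheaper normal form. Accordingly I expect most of the argument to consist of the correctness proof for the normalization step together with the bookkeeping that guarantees exhaustiveness of the weight-system search.
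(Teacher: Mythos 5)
Your proposal is correct and follows essentially the same route as the paper: Theorem~\ref{thm:classf-d=3} is obtained precisely as the output of the classification procedure of Section~\ref{sec:fwps-classification} (enumerate the length-four unit fraction partitions of each $g\le 30$ via Algorithm~\ref{alg:ufp}, enumerate the compatible Hermite normal form matrices $P$ for each partition and each constellation of local Gorenstein indices via Algorithm~\ref{alg:subclass} with the Fano modifications of Remark~\ref{rem:fano-mod}, and remove duplicates with the normal form of Definition~\ref{def:normal-form}, whose completeness as an isomorphism invariant the paper proves), with the computation carried out by the code at \cite{Bae23}. The only minor deviation is that you derive finiteness of the lattice data from the volume and multiplicity bounds of Theorems~\ref{thm:max-vol} and~\ref{thm:fwps-max-mult}, whereas the paper gets it (and a sharper search space) directly from the divisibility conditions $a_{kk}\mid\alpha_{k-1}$ of Propositions~\ref{prop:simp-Pmat-constr} and~\ref{prop:simp-Pmat-finite}; this is an implementation difference, not a gap.
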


\begin{theorem}\label{thm:classf-d=4}
Up to isomorphy there are $87,532$ Fano simplices of dimension four and Gorenstein index $g \le 2$. Of those, $1561$ are of Gorenstein index $g = 1$. The remaining $85,971$ simplices are of Gorenstein index $g = 2$.
\end{theorem}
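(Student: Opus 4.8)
The statement is proved by the same computation that underlies Theorems~\ref{thm:classf-d=2} and~\ref{thm:classf-d=3}: one runs the classification algorithm of this paper with input $d = 4$ and $g \in \{1,2\}$ and records the output. The plan is as follows.

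\emph{Reducing to a finite search.} A Fano simplex $\Delta$ of dimension $d$ corresponds to a fake weighted projective space $\PP(q_0,\dots,q_d)\git H$, so it is encoded by a well-formed weight system $Q = (q_0,\dots,q_d) \in \ZZ_{\ge 1}^{d+1}$ with $\gcd(q_0,\dots,q_d) = 1$ and $q_0 v_0 + \dots + q_d v_d = 0$ on the primitive vertices, together with the finite abelian quotient group $H$, whose order is $\mult(\Delta)$. The admissible weight systems are governed by unit fraction partitions into $d+1$ parts, and the Gorenstein index $g$ imposes divisibility and congruence conditions both on $Q$ and on $H$. The crucial point is that Theorems~\ref{thm:max-vol} and~\ref{thm:fwps-max-mult} bound $\Vol(\Delta)$ and $\mult(\Delta)$ in terms of $d$ and $g$; via the $g$-Sylvester sequence this translates into explicit bounds on the entries of $Q$ and on $|H|$, so that the search space becomes finite with effective cutoffs.

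\emph{Enumerating and deduplicating.} One first lists all weight systems $Q$ respecting the bound, recursively building the corresponding unit fraction partitions and pruning any partial choice that already violates the relevant $g$-Sylvester estimate or the Gorenstein congruences. For each $Q$ one enumerates the admissible quotient groups $H$ --- equivalently the relevant sublattices of $\ZZ^d$ --- and for each resulting simplex one computes a normal form of the vertex matrix $P$, using Hermite normal form together with the action of $\GL(d,\ZZ)$ and of the column permutations stabilizing $Q$, so that exactly one representative per isomorphism class survives. Finally, for each class one computes the true Gorenstein index as the least $g'$ with $g'\Delta^*$ a lattice simplex; this separates the output into the $g = 1$ part and the $g = 2$ part. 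Carrying this out with the implementation~\cite{Bae23} yields $1561$ classes of Gorenstein index one and $85{,}971$ of Gorenstein index two, hence $87{,}532$ in total.

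The difficulty here is practical rather than conceptual. Completeness --- that no Fano simplex is overlooked --- rests entirely on the sharpness of the bounds in Theorems~\ref{thm:max-vol} and~\ref{thm:fwps-max-mult}, which is precisely what was established earlier. Feasibility for $(d,g) = (4,2)$ is the real obstacle: a naive enumeration of weight systems and sublattices is far too large, so the efficiency improvements over the procedure of~\cite{HaeHaHaSpr22} --- aggressive pruning of partial partitions and a fast normal-form-based isomorphism test --- are what let the computation terminate in reasonable time. As an independent consistency check, the resulting $g = 1$ count can be matched against existing classifications of four-dimensional reflexive polytopes.
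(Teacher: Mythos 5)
Your outline is a workable way to organize the computation and shares the paper's overall architecture (unit fraction partitions determining the reduced weight systems, a finite lattice enumeration per weight system, a normal-form-based removal of duplicates, and a computer run recorded in \cite{Bae23}), but it differs from the paper in the step that actually guarantees completeness and feasibility, and one justification is off. The paper's classification in Section~\ref{sec:fwps-classification} is logically independent of Theorems~\ref{thm:max-vol} and~\ref{thm:fwps-max-mult}: completeness does not rest on those bounds, and certainly not on their \emph{sharpness} --- for truncating a search only the validity of an upper bound could matter, not the existence of simplices attaining it. Instead, finiteness comes from Propositions~\ref{prop:simp-Pmat-constr} and~\ref{prop:simp-Pmat-finite}: for a fixed unit fraction partition $A=(\alpha_0,\dots,\alpha_d)$ of $g$, every such simplex is $\Delta(P)$ with $P$ in Hermite normal form whose $k$-th diagonal entry divides $\alpha_{k-1}$, off-diagonal entries reduced modulo the diagonal, and last column forced by $P\cdot Q(A)=0$; this per-column divisibility (extracted from the Gorenstein forms) is far tighter than your proposal to enumerate all overlattices, or quotient groups $H$, of index up to the global multiplicity bound $3t_{g,d-1}^2/g$ of Theorem~\ref{thm:fwps-max-mult}, and it is what makes the $(d,g)=(4,2)$ run feasible without further ad hoc pruning. (Also note that finiteness of the admissible weight systems needs no volume bound: there are only finitely many unit fraction partitions of $g$ of length $d+1$.) Two smaller differences: the paper enforces the Gorenstein index during enumeration by prescribing local Gorenstein indices $(g_0,\dots,g_d)$ with $\lcm(g_0,\dots,g_d)=g$ and testing that each $g_k u_k$ from Equation~\ref{eq:gorform} is a primitive integral vector, rather than computing the index a posteriori as you suggest; and duplicates are removed via the specific normal form of Definition~\ref{def:normal-form}, where only permutations compatible with the weights and local indices are considered before taking Hermite normal forms. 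Your consistency check of the $g=1$ count against the $1561$ reflexive simplices of Kreuzer--Skarke agrees with the paper's discussion.
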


\begin{remark}
By the correspondence between Fano simplices and fake weighted projective spaces, Theorems \ref{thm:classf-d=2} - \ref{thm:classf-d=4} are also classifications of fake weighted projective spaces of corresponding dimension and Gorenstein index.
\end{remark}

Let us compare our results to existing classifications. In dimension two, Theorem~\ref{thm:classf-d=2} encompasses in particular the classification in \cite{Da09} of fake weighted projective planes of Gorenstein index at most three and the toric part of the classification in~\cite{HaeHaHaSpr22}. In dimension three we mention~\cite{Ka10}, where Kasprzyk classifies the three-dimensional \emph{canonical} Fano polytopes, ie. those with a single interior lattice point. The overlap with Theorem~\ref{thm:classf-d=3} consists of precisely~$204$ canonical Fano simplices of Gorenstein index at most $30$. There are only $21$ three-dimensional canonical Fano simplices that have Gorenstein index larger than~$30$. The largest Gorenstein index among those is $g = 420$. In dimension four we have the classification of the~$1561$ reflexive simplices by Kreuzer and Skarke~\cite{KrSk00}, which correspond to the~$1561$ Fano simplices of Gorenstein index one from Theorem \ref{thm:classf-d=4}. Note that there is no overlap between Theorem \ref{thm:classf-d=4} and the classification of empty $4$-simplices \cite{IgSa21} as our simplices have at least one interior lattice point.
Let us also mention Balletti's recent extensive classification of lattice polytopes by their volume \cite{Ba21}, where the polytopes are classified up to \emph{affine unimodular equivalence}, ie. also allowing for translations. As this does not leave the Gorenstein index invariant, their results are not immediately comparable to Theorems \ref{thm:classf-d=2} - \ref{thm:classf-d=4}.

Whereas the bounds for the volume and Mahler volume in Theorems \ref{thm:max-vol} and \ref{thm:max-mahler} are all sharp, we obtain sharpness in Theorem~\ref{thm:fwps-max-mult} (iii) for odd Gorenstein indices only. More explicitly we have the following.

\begin{remark}
In the case of even Gorenstein index $g$, the values provided for~$a_1,\dots,a_{d-1}$ in Theorem \ref{thm:fwps-max-mult} (iii) results in a matrix $P$ with the last column being non-primitive. In fact our classification results suggest, that for even Gorenstein index the multiplicity bound in Theorem \ref{thm:fwps-max-mult} (iii) is too high. We conjecture that in this case, apart from $(d,g) = (3,2), (3,4)$, the multiplicity is bounded by
\begin{equation*}
    \mult(\Delta) \ \le \ \frac{2t_{g,d-1}^2}{g}
\end{equation*}
and this bound is sharp, ie. there is a Fano simplex of dimension $d$ and Gorenstein index $g$ that attains this bound.
\end{remark}

The article is organized as follows. In Section \ref{sect:ratsimp} we associate with every IP simplex a unit fraction partition of its Gorenstein index. The main result of this section is Proposition \ref{prop:uf-vol}, which relates the volume and the multiplicity of a (Fano) simplex to its unit fraction partition. Section \ref{section:sharp-bounds-ufp} is dedicated to providing sharp bounds on unit fraction partitions. The main result is Theorem \ref{thm:ufp-bounds}, which is the foundation for proofing Theorems \ref{thm:max-vol} - \ref{thm:max-mahler}. Section \ref{sect:proofs} contains the proof of Theorems \ref{thm:max-vol} - \ref{thm:max-mahler}. In Section \ref{sec:fwps-classification} we present our classification procedure for Fano simplices.

\goodbreak

\section{Simplices, weight systems and unit fractions}\label{sect:ratsimp}

We associate with every IP simplex a unit fraction partition of its Gorenstein index, see Proposition \ref{prop:a-of-delta}.The main result of this section is Proposition \ref{prop:uf-vol}, which relates the volume and the factor of an IP simplex to its unit fraction partition. Throughout, $N$ is a rank $d$ lattice for some~$d \in \ZZ_{\ge 2}$. The dual lattice is denoted by $M = \Hom(N,\ZZ)$ with pairing $\bangle{\cdot\, , \cdot } \colon M \times N \rightarrow \ZZ$. We write $N_\QQ$ and $M_\QQ$ for the corresponding rational vector spaces. We assume that polytopes are always full dimensional, ie. $\lin(\Delta) = N_\QQ$. The \emph{normalized volume} of a $d$-dimensional polytope~$\Delta$ is~$\Vol(\Delta) = d!\, \vol(\Delta)$, where~$\vol(\Delta)$ denotes its euclidean volume. By an \emph{IP polytope} we mean a polytope $\Delta \subseteq N_\QQ$ that contains the origin $\mathbf{0} \in N_\QQ$ in its interior. The \emph{dual} of an IP polytope $\Delta$ is the polytope
\begin{equation*}
    \Delta^* \ := \ \{ u \in M_\QQ ; \, \bangle{u,v} \ge -1 \text{ for all } v \in \Delta\} \ \subseteq \ M_\QQ.
\end{equation*}
For a facet $F$ of $\Delta$ we denote by $u_F \in M_{\QQ}$ the unique linear form with $\bangle{u_F,v} = -1$ for all $v \in F$. We have
\begin{equation*}
    \Delta^* \ = \ \conv(\, u_F; \, F \text{ facet of } \Delta\, ), \qquad \Delta \ = \ \{v \in N_\QQ ; \, \bangle{u_F,v} \ge -1,\, F \text{ facet of } \Delta\}.
\end{equation*}
A \emph{lattice polytope} is a polytope $\Delta \in N_\QQ$ whose vertices lie in the lattice $N$. A \emph{Fano} polytope is an IP lattice polytope whose vertices are primitive lattice points. We regard two IP polytopes $\Delta \subseteq N_{\QQ}$ and $\Delta' \subseteq N'_{\QQ}$ as isomorphic if there is a lattice isomorphism~$\varphi \colon N \rightarrow N'$ mapping $\Delta$ bijectively to $\Delta'$.

\begin{definition}See~\cites{Co02,Ni07}.
A \emph{weight system $Q$} of length $d$ is a $(d+1)$-tuple of positive rational numbers $Q = (q_0, \dots, q_d)$. The \emph{total weight} of a weight system $Q$ is the rational number $|Q| := q_0 + \dots + q_d$. A weight system $Q$ is called \emph{reduced} if it consists of integers and $\gcd(q_0,\dots,q_d) = 1$ holds. A reduced weight system is called \emph{well-formed} if~$\gcd(q_j ; \, j = 0,\dots,d,\ j \ne i ) = 1$ holds for all $i = 0,\dots, d$. Any weight system $Q$ can be written as $\lambda(Q) \cdot Q^\red$ with a unique reduced weight system $Q^\red$ and a unique positive rational number $\lambda(Q)$. We call $\lambda(Q)$ the \emph{factor} of $Q$ and $Q^\red$ the \emph{reduction} of $Q$.
\end{definition}

\begin{definition}See~\cites{Co02,Ni07}.
To any IP simplex $\Delta$ with vertices~$v_0,\dots,v_d \in \QQ^d$ we associate a weight system by
\begin{equation*}
    Q_\Delta \ := \ (q_0,\dots,q_d), \qquad q_i \ := \ |\det(\, v_j ; \ j = 0,\dots, d, \ j \ne i\,)|.
\end{equation*}
\end{definition}

\begin{remark}\label{rem:ws-props}
Let $\Delta \subseteq N_\QQ$ a $d$-dimensional IP simplex with vertices $v_0,\dots,v_d$ and weight system $Q_\Delta = (q_0,\dots,q_d)$.
\begin{enumerate}
    \item For the total weight we have $|Q_\Delta| = \Vol(\Delta)$.
    \item If~$\Delta$ is a Fano simplex, then $Q_\Delta^\red$ is well-formed.
    \item We have $\sum_{i=0}^d q_i v_i = 0$ and $Q_\Delta^\red = (q'_0.\dots,q'_d)$ is the unique reduced weight system satisfying $\sum_{i=0}^d q'_i v_i = 0$.
    \item For any $H \in \GL(d,N_\QQ)$ we have $Q_{H\Delta} = |\det(H)|\, Q_\Delta$. In particular, the weight systems of isomorphic IP simplices coincide up to order.
\end{enumerate}
\end{remark}

For an IP lattice simplex $\Delta \subseteq N_\QQ$ we denote by $N(\Delta) \subseteq N$ the sublattice generated by the vertices of $\Delta$. If $\Delta \subseteq N_\QQ$ is any IP simplex and $\Delta' := g_\QQ(\Delta)\, \Delta$, then we have
\begin{equation*}
    \lambda(\Delta) \ := \ \lambda(Q_\Delta) \ = \ \frac{[N:N(\Delta')]}{g_\QQ(\Delta)^d}.
\end{equation*}
In case $\Delta$ is a Fano simplex, we write $\mult(\Delta) := \lambda(\Delta)$ and call it the \emph{multiplicity} of~$\Delta$. It coincides with the cardinality of the torsion part of the class group~$\Cl(Z)$ of the associated fake weighted projective space~$Z = Z(\Delta)$. The following Proposition is a reformulation of~\cite{BoBo92}*{Prop.~2}. Compare also \cite{Co02}*{4.4--4.6}.

\begin{proposition}\label{prop:Qred-simp}
To any reduced weight system $Q$ of length $d$ there exists a~$d$-dimensional IP lattice simplex $\Delta(Q) \subseteq \QQ^d$, unique up to an isomorphism, with~$Q_{\Delta(Q)} = Q$. For any IP simplex $\Delta \in \QQ^d$ with $(Q_\Delta)_\red = Q$ there is a linear map $H \in \GL(d,\QQ)$ whose determinant satisfies~$|\det(H)| = \lambda(\Delta)$, such that~$\Delta = H \,\Delta(Q)$ holds.
\end{proposition}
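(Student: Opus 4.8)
The plan is to give a self‑contained version of the standard construction attached to a primitive integer vector and then to read off everything from Remark~\ref{rem:ws-props}. Write $w := (q_0,\dots,q_d) \in \ZZ^{d+1}$; since $Q$ is reduced we have $\gcd(q_0,\dots,q_d)=1$, so $w$ is primitive. Put $M := \ZZ^{d+1}/\ZZ w$, a free lattice of rank $d$, let $\bar v_i \in M$ be the class of the $i$-th standard basis vector $e_i$, and set $\Delta(Q) := \conv(\bar v_0,\dots,\bar v_d) \subseteq M_\QQ$ (identifying $M \cong \ZZ^d$). The $\bar v_i$ generate $M$ and satisfy $\sum_i q_i\bar v_i = 0$ by construction; as all $q_i$ are positive, $\mathbf{0} = \sum_i \tfrac{q_i}{|Q|}\bar v_i$ lies in the interior of the full-dimensional polytope $\Delta(Q)$, and the $\bar v_i$ are honest vertices since, the $\bar v_i$ spanning a $d$-dimensional space, this is the only linear relation among them up to scaling. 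Hence $\Delta(Q)$ is a $d$-dimensional IP lattice simplex.

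To compute its weight system I would verify, for each $i$,
\[
  \bigl|\det(\bar v_j;\, j \ne i)\bigr| \;=\; \bigl[\,M : \langle \bar v_j;\, j\ne i\rangle\,\bigr] \;=\; \bigl[\,\ZZ^{d+1} : \ZZ w + \textstyle\sum_{j\ne i}\ZZ e_j\,\bigr] \;=\; q_i,
\]
the final equality by reducing $w$ modulo the $e_j$ with $j \ne i$ and reading off the $e_i$-coordinate. Thus $Q_{\Delta(Q)} = Q$. For uniqueness, let $\Delta'$ be any $d$-dimensional IP lattice simplex in $N'_\QQ$ with $Q_{\Delta'} = Q$, with vertices $v'_0,\dots,v'_d$. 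Since $\Delta'$ is a lattice simplex, $[N':N(\Delta')] = \lambda(\Delta') = \lambda(Q) = 1$ (as $Q$ is reduced), so the $v'_i$ generate $N'$; and by Remark~\ref{rem:ws-props}(3) they satisfy $\sum_i q_i v'_i = 0$. Hence the surjection $\psi\colon \ZZ^{d+1}\to N'$, $e_i\mapsto v'_i$, has rank-$1$ kernel containing the primitive vector $w$, so $\ker\psi = \ZZ w$; therefore $\psi$ induces an isomorphism $M = \ZZ^{d+1}/\ZZ w \cong N'$ carrying $\bar v_i$ to $v'_i$, i.e.\ $\Delta(Q)\cong\Delta'$.

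For the last assertion, let $\Delta\subseteq\QQ^d$ be an IP simplex with $(Q_\Delta)^\red = Q$ and vertices $v_0,\dots,v_d$. By Remark~\ref{rem:ws-props}(3) we again have $\sum_i q_i v_i = 0$, the same dependency as for the $\bar v_i$. As $\bar v_1,\dots,\bar v_d$ and $v_1,\dots,v_d$ are each $\QQ$-bases of $\QQ^d$, there is a unique $H\in\GL(d,\QQ)$ with $H\bar v_i = v_i$ for $i = 1,\dots,d$, and the shared relation forces $H\bar v_0 = -\tfrac1{q_0}\sum_{i\ge1}q_i v_i = v_0$ as well; hence $\Delta = H\,\Delta(Q)$. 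Finally, Remark~\ref{rem:ws-props}(4) gives $Q_\Delta = Q_{H\Delta(Q)} = |\det H|\,Q$, whereas by definition $Q_\Delta = \lambda(\Delta)\,(Q_\Delta)^\red = \lambda(\Delta)\,Q$; comparing the two yields $|\det H| = \lambda(\Delta)$.

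The only steps that are more than bookkeeping are the two lattice facts — the index computation $[M : \langle\bar v_j;\,j\ne i\rangle] = q_i$ and the identification $\ker\psi = \ZZ w$ rather than a proper sublattice of $\QQ w$ — and both hinge precisely on $w$ being primitive, i.e.\ on $Q$ being reduced. Alternatively, one may simply invoke \cite{BoBo92}*{Prop.~2} (see also \cite{Co02}*{4.4--4.6}) and record the dictionary between their normal form and the weight system $Q_\Delta$.
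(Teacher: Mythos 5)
Your proof is correct. Note that the paper itself offers no argument for this proposition: it is stated as a reformulation of \cite{BoBo92}*{Prop.~2} (see also \cite{Co02}*{4.4--4.6}), so there is no in-paper proof to compare against; what you have written is a correct, self-contained rendition of the standard construction underlying those references. The key points all check out: primitivity of $w=(q_0,\dots,q_d)$ makes $M=\ZZ^{d+1}/\ZZ w$ free of rank $d$ and forces $\ker\psi=\ZZ w$ in the uniqueness step; the index computation $[M:\langle \bar v_j;\,j\ne i\rangle]=q_i$ is the right way to see $Q_{\Delta(Q)}=Q$; and the final determinant identity follows cleanly from Remark~\ref{rem:ws-props}~(iv) together with $Q_\Delta=\lambda(\Delta)\,Q_\Delta^\red$. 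Two small remarks: your uniqueness step uses $\lambda(\Delta')=[N':N(\Delta')]$ for a lattice simplex, which is exactly the identity the paper records just before the proposition (and is itself a Smith-normal-form fact, gcd of maximal minors equals the index), so it is legitimate but worth flagging as an input rather than something you reprove; and the assertion that the $\bar v_i$ are vertices could be phrased slightly more carefully as affine independence --- the relation space is spanned by $(q_0,\dots,q_d)$, whose coordinate sum $|Q|$ is nonzero, so no nontrivial affine relation exists --- which is what your ``only linear relation up to scaling'' argument amounts to.
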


\begin{example}
Consider a reduced weight system $Q = (q_0,\dots,q_d)$ and assume that~$q_d = 1$ holds. This situation will appear frequently in later sections. We can immediately write down $\Delta(Q)$: It's vertices are given by the columns of the matrix
\begin{equation*}
    P
    \ = \ 
    \left[\begin{array}{ccccc}
        1      & 0      & \dots  & 0      & -q_0   \\
        0      & \ddots & \ddots & \vdots & \vdots \\
        \vdots & \ddots & \ddots & 0      & \vdots \\
        0      & \dots  & 0      & 1      & -q_{d-1}
    \end{array}\right].
\end{equation*}
\end{example}

\begin{definition}\label{def:gorind}
Let $\Delta \subseteq N_\QQ$ an IP simplex.
\begin{enumerate}
    \item
    The \emph{index of rationality of $\Delta$} is the positive integer
    \begin{equation*}
        g_\QQ(\Delta) \ := \ \min \{\, k \in \ZZ_{\ge 1} ; \ k \Delta \text{ is a lattice simplex}\,\}.
    \end{equation*}
    \item
    The \emph{Gorenstein index of $\Delta$} is the positive integer
    \begin{equation*}
        g(\Delta) \ := \ g_\QQ(\Delta) \cdot g_\QQ(\Delta^*).
    \end{equation*}
    \item
    Assume $\Delta$ is a lattice simplex. Denote by $u_0,\dots,u_d \in M_\QQ$ the vertices of the dual $\Delta^* \subseteq M_\QQ$. We call $u_k$ the \emph{$k$-th Gorenstein form} of $\Delta$. We define the~\emph{$k$-th local Gorenstein index} $g_k$ of $\Delta$ as the smallest positive integer such that~$g_k u_k \in M$ holds.
\end{enumerate}
\end{definition}

\begin{remark}
If $\Delta \subseteq N_\QQ$ is an IP lattice simplex with local Gorenstein indices~$g_0,\dots,g_d$, then we have $g(\Delta) = \lcm( g_0,\dots,g_d )$.
\end{remark}

\begin{definition}
Let $g \in \ZZ_{\ge 1}$. A tuple $A = (\alpha_1,\dots,\alpha_n) \in \ZZ^n_{\ge 1}$ is called a \emph{unit fraction partition} (ufp for short) \emph{of $g$} of length $n$ if the following holds:
\begin{equation*}
    \frac{1}{g} \ = \ \sum\limits_{k=1}^n \frac{1}{a_k}.
\end{equation*}
A tuple $A = (\alpha_1,\dots,\alpha_n) \in \ZZ^n_{\ge 1}$ is called a \emph{unit fraction partition} if it is a ufp of $g$ for some $g \in \ZZ_{\ge 1}$. For a unit fraction partition $A = (\alpha_1,\dots,\alpha_n)$ of $g$ we call
\begin{equation*}
    t_A \ := \ \lcm(\alpha_1,\dots,\alpha_n), \qquad \lambda(A) \ := \ \gcd(g,\alpha_1,\dots,\alpha_n), \qquad A^{\red} \ := \ A/\lambda(A)
\end{equation*}
the \emph{total weight}, the \emph{factor} and the \emph{reduction} of $A$, respectively. A unit fraction partition~$A$ is called \emph{reduced} if it coincides with its reduction. It is called \emph{well-formed} if $\alpha_i \mid \lcm(\alpha_j \, ; \, j \ne i)$ holds for all $i = 1,\dots,n$.
\end{definition}

\begin{proposition}\label{prop:a-of-delta}
See \cite{Bae22}*{Prop.~3.2}. Let $\Delta \subseteq N_\QQ$ a $d$-dimensional IP simplex of Gorenstein index $g$ with weight system $Q_\Delta = (q_0, \dots,q_d)$. Then
\begin{equation*}
        A(\Delta) \ := \ \left( \frac{g |Q_\Delta|}{q_0}, \dots, \frac{g |Q_\Delta|}{q_d} \right)
\end{equation*}
is a unit fraction partition of $g$ of length $d+1$. We call it the \emph{unit fraction partition of $g$ associated with $\Delta$}.
\end{proposition}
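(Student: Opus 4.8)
The plan is to reduce everything to a single integrality statement. Since $\sum_{k=0}^d q_k = |Q_\Delta|$ by definition of the total weight, we automatically have $\sum_{k=0}^d \tfrac{q_k}{g|Q_\Delta|} = \tfrac1g$, and each entry $g|Q_\Delta|/q_k$ is manifestly positive (we have $g \ge 1$, and all $q_k$ as well as $|Q_\Delta|$ are positive). So the only thing to prove is that $g|Q_\Delta|/q_k \in \ZZ$ for every $k$.

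First I would give an intrinsic description of the ratio $|Q_\Delta|/q_k$. Write $v_0,\dots,v_d$ for the vertices of $\Delta$, let $F_k$ be the facet of $\Delta$ not containing $v_k$, and put $u_k := u_{F_k} \in M_\QQ$, so $\langle u_k, v_j\rangle = -1$ for all $j \ne k$. Pairing $u_k$ with the relation $\sum_{j=0}^d q_j v_j = 0$ from Remark \ref{rem:ws-props}(iii) and separating the $j=k$ term yields $q_k\langle u_k,v_k\rangle = \sum_{j\ne k} q_j = |Q_\Delta| - q_k$, hence
\[
    \frac{|Q_\Delta|}{q_k} \ = \ \langle u_k, v_k\rangle + 1 .
\]
Consequently $g|Q_\Delta|/q_k = g\langle u_k,v_k\rangle + g$, so it suffices to check that $g\langle u_k, v_k\rangle \in \ZZ$.

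This last step is where the hypothesis on the Gorenstein index is used, and it is the only substantive point. Set $a := g_\QQ(\Delta)$ and $b := g_\QQ(\Delta^*)$, so $g = ab$ by Definition \ref{def:gorind}(ii). By definition of the index of rationality, $a\Delta$ is a lattice simplex, so $av_k \in N$; likewise $b\Delta^*$ is a lattice simplex, so $bu_k \in M$ (the $u_k$ being exactly the vertices of the $d$-simplex $\Delta^*$). Since the pairing $M \times N \to \ZZ$ takes integral values, $g\langle u_k, v_k\rangle = \langle bu_k, av_k\rangle \in \ZZ$. Combining with the previous paragraph, every $g|Q_\Delta|/q_k$ is a positive integer, and $A(\Delta)$ is a unit fraction partition of $g$ of length $d+1$.

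The only thing to be careful about is that $\Delta$ need not be a lattice simplex, so one cannot argue directly that $\langle u_k, v_k\rangle$ has denominator dividing $g$; the point of splitting $g = g_\QQ(\Delta)\cdot g_\QQ(\Delta^*)$ is precisely that it lets us clear the denominator of $v_k$ on the $N$-side and that of $u_k$ on the $M$-side separately. Everything else is immediate from the identities already recorded in this section.
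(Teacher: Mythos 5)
The paper does not actually prove this proposition; it is quoted from \cite{Bae22}*{Prop.~3.2}, so there is no in-text argument to compare against. Judged on its own, your proof is correct and complete. The reduction is right: the sum condition is immediate from $\sum_k q_k = |Q_\Delta|$, positivity is clear, and the whole content is the integrality of $g|Q_\Delta|/q_k$. Your identity $|Q_\Delta|/q_k = \bangle{u_k,v_k}+1$, obtained by pairing $u_k$ with the relation $\sum_j q_j v_j = 0$ of Remark \ref{rem:ws-props}(iii), is the standard mechanism (it is exactly how the reflexive case is handled by Nill and Conrads), and your final step correctly uses the paper's Definition \ref{def:gorind}(ii), $g = g_\QQ(\Delta)\, g_\QQ(\Delta^*)$, to write $g\bangle{u_k,v_k} = \bangle{\,g_\QQ(\Delta^*)u_k,\; g_\QQ(\Delta)v_k\,} \in \ZZ$, clearing the denominators of $v_k$ in $N$ and of $u_k$ in $M$ separately. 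That last point is precisely the subtlety for non-lattice IP simplices, and you flag and resolve it correctly (the $u_k = u_{F_k}$ are indeed the vertices of the dual simplex $\Delta^*$, as the paper records before Definition \ref{def:gorind}). In short: a valid, self-contained proof of a statement the paper only cites.
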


The following Proposition establishes a connection between geometric properties of an IP simplex $\Delta$ and its associated unit fraction partition. It can be seen as an extension of \cite{Ni07}*{Prop.~4.5} to the case of non-reflexive IP simplices. Compare also~\cite{Bae22}*{Prop.~3.3}.

\begin{proposition}\label{prop:uf-vol}
Let $\Delta \subseteq N_\QQ$ a $d$-dimensional IP simplex of Gorenstein index $g(\Delta) = g$ with associated unit fraction partition $A(\Delta) = (\alpha_0,\dots,\alpha_d)$ of $g$. Then~$A(\Delta) = A(\Delta^*)$ holds and we have:
\begin{enumerate}
    \item
    \begin{equation*}
        \Vol(\Delta)\Vol(\Delta^*)
        \ = \
        \frac{1}{g^{d+1}}\,\alpha_0\cdots \alpha_d,
    \end{equation*}
    
    \item
    \begin{equation*}
        \lambda(\Delta^*) \Vol(\Delta)
        \ = \
        \lambda(\Delta) \Vol(\Delta^*)
        \ = \
        \frac{1}{g^d} \frac{\alpha_0\cdots \alpha_d}{\lcm(\alpha_0,\dots,\alpha_d)},
    \end{equation*}
    
    \item
    \begin{equation*}
        \lambda(\Delta) \lambda(\Delta^*)
        \ = \
        \frac{1}{g^{d-1}} \frac{\alpha_0\cdots \alpha_d}{\lcm(\alpha_0,\dots,\alpha_d)^2}.
    \end{equation*}        
\end{enumerate}
\end{proposition}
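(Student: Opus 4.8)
The plan is to pass to homogeneous coordinates and extract everything from one matrix identity. Write $v_0,\dots,v_d\in N_\QQ$ for the vertices of $\Delta$ and $u_0,\dots,u_d\in M_\QQ$ for the vertices of $\Delta^*$, labelled so that $\bangle{u_j,v_k}=-1$ for all $k\ne j$; note that $(\Delta^*)^*=\Delta$, so the two roles are interchangeable and $g(\Delta^*)=g_\QQ(\Delta^*)\,g_\QQ(\Delta)=g$. Put $\hat v_i:=(1,v_i)$ and $\hat u_i:=(1,u_i)$ in $\QQ^{1+d}$ and let $\hat V,\hat U$ be the $(d{+}1)\times(d{+}1)$ matrices with these columns. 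With the standard pairing $\hat u_j\cdot\hat v_k=1+\bangle{u_j,v_k}$ one has $\hat u_j\cdot\hat v_k=0$ for $k\ne j$, while $\sum_i q_iv_i=0$ (Remark~\ref{rem:ws-props}(iii)) gives $\bangle{u_j,v_j}=\tfrac1{q_j}\sum_{i\ne j}q_i=|Q_\Delta|/q_j-1$, hence
\[
    \hat U^{\mathrm T}\hat V\ =\ \diag\!\left(|Q_\Delta|/q_0,\dots,|Q_\Delta|/q_d\right).
\]
Expanding $\det\hat V$ along the row of ones yields $\det\hat V=\sum_i(-1)^i\det(v_j;\,j\ne i)$, and since $(q_i)_i$ spans the kernel of $[v_0|\cdots|v_d]$ with all $q_i>0$ the summands have a common sign, so $|\det\hat V|=\sum_iq_i=|Q_\Delta|=\Vol(\Delta)$ by Remark~\ref{rem:ws-props}(i); the same reasoning (the origin lies in the interior of $\Delta^*$ as well) gives $|\det\hat U|=|Q_{\Delta^*}|=\Vol(\Delta^*)$.

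Next I would pin down $Q_{\Delta^*}$ relative to $Q_\Delta$. Pairing $\sum_iq_i\hat u_i$ with $\hat v_k$ gives the value $q_k\cdot(|Q_\Delta|/q_k)=|Q_\Delta|$ for every $k$, the same as pairing $|Q_\Delta|\,(1,0,\dots,0)$ with $\hat v_k$; since the $\hat v_k$ form a basis, $\sum_iq_i\hat u_i=|Q_\Delta|\,(1,0,\dots,0)$, whose lower block is the relation $\sum_iq_iu_i=0$. By Remark~\ref{rem:ws-props}(iii) applied to $\Delta^*$ this means $(q_i)_i$ is proportional to $Q_{\Delta^*}$, so $Q_{\Delta^*}^{\red}=Q_\Delta^{\red}=:Q^{\red}$ and $q_i^*=(|Q_{\Delta^*}|/|Q_\Delta|)\,q_i$ for all $i$. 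Combined with $g(\Delta^*)=g$ this gives $\alpha_i^*=g|Q_{\Delta^*}|/q_i^*=g|Q_\Delta|/q_i=\alpha_i$, that is $A(\Delta^*)=A(\Delta)$.

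Now the three formulas follow by bookkeeping. Taking $|\det|$ in the displayed identity gives $\Vol(\Delta)\Vol(\Delta^*)=|Q_\Delta|^{d+1}/(q_0\cdots q_d)$, and inserting $q_i=g|Q_\Delta|/\alpha_i$ turns this into $\Vol(\Delta)\Vol(\Delta^*)=(\alpha_0\cdots\alpha_d)/g^{d+1}$, which is~(i). For (ii) and (iii) I need $|Q^{\red}|$: as $Q_\Delta$ is proportional to $(1/\alpha_0,\dots,1/\alpha_d)$, its reduction is the primitive integral tuple proportional to it, namely $(L/\alpha_0,\dots,L/\alpha_d)$ with $L:=\lcm(\alpha_0,\dots,\alpha_d)$ — this tuple is primitive because for each prime $p$ some $\alpha_j$ attains the exponent of $p$ in $L$, so $p\nmid L/\alpha_j$. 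Hence $|Q^{\red}|=\sum_iL/\alpha_i=L/g$, using that $A(\Delta)$ is a unit fraction partition of $g$. Since $\lambda(\Delta)=|Q_\Delta|/|Q^{\red}|$ and $\lambda(\Delta^*)=|Q_{\Delta^*}|/|Q^{\red}|$, we obtain
\[
    \lambda(\Delta)\Vol(\Delta^*)=\lambda(\Delta^*)\Vol(\Delta)=\frac{\Vol(\Delta)\Vol(\Delta^*)}{|Q^{\red}|}=\frac{\alpha_0\cdots\alpha_d}{g^{d}\,\lcm(\alpha_0,\dots,\alpha_d)},
\]
which is (ii), and dividing once more by $|Q^{\red}|=L/g$ gives $\lambda(\Delta)\lambda(\Delta^*)=(\alpha_0\cdots\alpha_d)/\bigl(g^{d-1}\lcm(\alpha_0,\dots,\alpha_d)^2\bigr)$, which is (iii).

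The main obstacle is the first two paragraphs: getting the diagonal identity with the correct constant, upgrading $|\det\hat V|=\Vol(\Delta)$ to an equality of volumes (not merely up to sign), and extracting $\sum_iq_iu_i=0$ so as to determine $Q_{\Delta^*}$ up to the scalar $\Vol(\Delta^*)/\Vol(\Delta)$. After that the only further input is the elementary fact that $\gcd_i\bigl(\lcm_j(\alpha_j)/\alpha_i\bigr)=1$. For $g=1$ the statement specializes to \cite{Ni07}*{Prop.~4.5}.
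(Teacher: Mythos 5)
Your proof is correct, and it takes a somewhat different route from the paper. The paper leans on Lemma \ref{lemma:ws-connection} (Nill's identity $Q_{\Delta^*}=m_{Q_\Delta}Q_\Delta$, cited from the literature) together with the identity $\lambda(\Delta)=g|Q_\Delta|/t_{A(\Delta)}$: it proves (ii) and (iii) by direct substitution and then obtains (i) as their combination. You instead work in homogeneous coordinates and derive everything from the single identity $\hat U^{\mathrm T}\hat V=\diag(|Q_\Delta|/q_0,\dots,|Q_\Delta|/q_d)$: the cofactor/kernel argument gives $|\det\hat V|=\Vol(\Delta)$ and $|\det\hat U|=\Vol(\Delta^*)$, the relation $\sum_i q_iu_i=0$ pins down $Q_{\Delta^*}$ up to scale, and taking determinants yields (i) first, with (ii) and (iii) following from $|Q_\Delta^{\red}|=\lcm(\alpha_i)/g$ and $\lambda(\Delta)=|Q_\Delta|/|Q_\Delta^{\red}|$. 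In effect you re-prove the cited Lemma \ref{lemma:ws-connection} (your identity gives $|Q_{\Delta^*}|=|Q_\Delta|^d/(q_0\cdots q_d)$ and $q_i^*=m_{Q_\Delta}q_i$), so your argument is more self-contained, at the cost of a few linear-algebra checks (the common-sign argument for the cofactors, the primitivity of $(\lcm(\alpha_j)/\alpha_i)_i$, which duplicates part of Proposition \ref{prop:ws-ufp}); the paper's version is shorter because it delegates exactly these points to the quoted results. Both establish $A(\Delta)=A(\Delta^*)$ the same way, via $g(\Delta^*)=g(\Delta)$ and proportionality of the weight systems.
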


Note that the left hand side of equations (i)-(iii) in Proposition \ref{prop:uf-vol} only depends on the simplex $\Delta$, while the right hand side only depends on the unit fraction partition $A(\Delta)$. For the proof of Proposition \ref{prop:uf-vol}, we need the following Lemma~\ref{lemma:ws-connection}, which is originally \cite{Ni07}*{Prop.~3.6}.

\begin{definition}
See \cite{Ni07}*{Def.~3.4}. For any weight system $Q = (q_0,\dots,q_d)$ set
\begin{equation*}
    m_Q \ := \frac{|Q|^{d-1}}{q_0 \cdots q_d}.
\end{equation*}
\end{definition}

\begin{lemma}\label{lemma:ws-connection}
See \cite{Ni07}*{Prop.~3.6}. For any $d$-dimensional IP simplex $\Delta$ we have
\begin{equation*}
Q_{\Delta^*} \ = \ m_{Q_\Delta} Q_{\Delta}.
\end{equation*}
\end{lemma}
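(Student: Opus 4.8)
The plan is to reduce the statement to a direct computation with the dual simplex. Let $v_0,\dots,v_d$ be the vertices of $\Delta$ and, for each $i$, let $F_i$ be the facet of $\Delta$ spanned by all vertices except $v_i$, with associated linear form $u_{F_i} \in M_\QQ$. By the description of $\Delta^*$ recalled in the text, the vertices of $\Delta^*$ are exactly $u_0 := u_{F_0},\dots,u_d := u_{F_d}$, so that $Q_{\Delta^*} = (p_0,\dots,p_d)$ with $p_i = \lvert \det(u_j ; j \ne i)\rvert$. The key identity to establish is that the $(d+1)\times(d+1)$ matrices $U = (u_0 \mid \dots \mid u_d)$ (augmented, say, by a row of ones) and the analogous augmented vertex matrix $V = (v_0 \mid \dots \mid v_d)$ are related by $U^{\!\top} V = -I$ in a suitable normalization, which encodes precisely the defining relations $\bangle{u_j, v_k} = -1$ for $k \ne j$ together with the Euler-type relation $\sum q_i v_i = 0$ from Remark \ref{rem:ws-props}(iii).

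Concretely, I would proceed as follows. First, form the $(d+1)\times(d+1)$ matrix $\widetilde V$ whose $i$-th column is $(q_i, q_i v_i) \in \QQ \times \QQ^d$ (prepending the weight $q_i$); its determinant is $\pm\sum_i q_i \det(v_j ; j \ne i)\cdot(\text{signs}) = \pm |Q_\Delta|$ up to sign, by cofactor expansion along the first row, using that each $\pm q_i = \det(v_j; j\ne i)$ up to a global sign that can be fixed by orienting the $v_i$. Next, form the matrix $\widetilde U$ whose $i$-th column is $(1, u_i)$; the relation $\bangle{u_j,v_k} = -1$ for $j \ne k$, read across all pairs, gives $\widetilde U^{\!\top}\, \widetilde V = c\, I$ for a scalar $c$, once one checks the diagonal entries using $\sum_i q_i v_i = 0$, namely $(1,u_j)\cdot(q_j, q_j v_j) = q_j + q_j\bangle{u_j,v_j} = q_j(1 + \bangle{u_j, v_j})$, and the off-diagonal entries vanish after the analogous manipulation. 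Taking determinants yields $\det(\widetilde U)\cdot\det(\widetilde V) = c^{d+1}$, which expresses $\det(\widetilde U)$, hence (again by cofactor expansion along its first row) the total weight $|Q_{\Delta^*}|$ and more refined data, in terms of the $q_i$ and $|Q_\Delta|$.

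To extract the componentwise statement $Q_{\Delta^*} = m_{Q_\Delta} Q_\Delta$ rather than just an identity of total weights, I would apply the same bordered-matrix computation after deleting the $i$-th column and row, i.e. work with the minors: $p_i = \lvert\det(u_j; j\ne i)\rvert$ can be rewritten, using the relations $\bangle{u_j, v_k} = -1$ for $k \notin\{i,j\}$ and Cramer's rule on the $d\times d$ system, as a ratio $\prod_{j\ne i} q_j^{-1}$ times a power of $|Q_\Delta|$; comparing with $q_i$ gives exactly the factor $m_{Q_\Delta} = |Q_\Delta|^{d-1}/(q_0\cdots q_d)$. The main obstacle I anticipate is purely bookkeeping: keeping track of the global signs and of the normalization constant $c$ through the cofactor expansions, and making sure the index-of-rationality scaling does not intervene (it cancels, since both sides of the claimed identity scale the same way under $\Delta \mapsto H\Delta$ by Remark \ref{rem:ws-props}(iv)). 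Since the result is quoted from \cite{Ni07}*{Prop.~3.6}, an acceptable alternative is simply to cite it; the sketch above indicates the self-contained route.
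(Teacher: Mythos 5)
The paper itself offers no argument here: the lemma is simply quoted from \cite{Ni07}*{Prop.~3.6}, and the citation route you mention at the end is exactly what the author does. Your self-contained sketch follows the standard duality computation behind that result, and its skeleton is viable: the bordered-matrix identity plus an extraction of $d\times d$ minors does prove the componentwise statement, not only the identity of total weights.

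However, as written your two key intermediate claims are mutually inconsistent. With $\widetilde V$ having columns $(q_i, q_i v_i)$ and $\widetilde U$ having columns $(1,u_i)$, the product identity is indeed a scalar matrix: $(\widetilde U^{\top}\widetilde V)_{jk} = q_k\bigl(1+\bangle{u_j,v_k}\bigr)$ vanishes for $j\ne k$ and equals $|Q_\Delta|$ for $j=k$, using $\sum_i q_iv_i=0$, so $c=|Q_\Delta|$ (note it is the diagonal, not the off-diagonal, that needs the Euler relation). But then the cofactor expansion of $\det\widetilde V$ along its first row involves the scaled minors $\det(q_jv_j;\,j\ne i)=\bigl(\prod_{j\ne i}q_j\bigr)\det(v_j;\,j\ne i)$, so $|\det\widetilde V| = (q_0\cdots q_d)\,|Q_\Delta|$, not $|Q_\Delta|$. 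Your claimed value $\pm|Q_\Delta|$ corresponds to the other normalization, columns $(1,v_i)$, for which $\widetilde U^{\top}\widetilde V$ is only the diagonal matrix $\diag(|Q_\Delta|/q_0,\dots,|Q_\Delta|/q_d)$ rather than $c\,I$. Either convention works, but combining the two statements as you give them yields $|Q_{\Delta^*}| = |Q_\Delta|^{d}$ instead of the correct $|Q_\Delta|^{d}/(q_0\cdots q_d)$, so the bookkeeping must be fixed before the factor $m_{Q_\Delta}$ appears. The componentwise step is also only gestured at; the cleanest finish avoids the bordered matrices altogether: for fixed $i$ the matrix $\bigl(\bangle{u_j,v_k}\bigr)_{j,k\ne i}$ equals $D-J$ with $D=\diag\bigl(|Q_\Delta|/q_j\bigr)_{j\ne i}$ and $J$ the all-ones matrix, and the rank-one update formula gives $\det(D-J)=|Q_\Delta|^{d-1}q_i\big/\prod_{j\ne i}q_j$; dividing by $|\det(v_j;\,j\ne i)|=q_i$ yields $p_i = m_{Q_\Delta}\,q_i$ directly. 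Alternatively, from $\widetilde U^{\top} = |Q_\Delta|\,\widetilde V^{-1}$ (in the scaled convention) Jacobi's complementary-minor identity identifies $\det(u_j;\,j\ne i)$ with the entry $q_i$ of $\widetilde V$ times $|Q_\Delta|^{d}/\det\widetilde V$, giving the same conclusion. With these corrections your argument is complete and is, in substance, the proof the cited reference gives.
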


\begin{proof}[Proof of Proposition \ref{prop:uf-vol}]
By Lemma \ref{lemma:ws-connection} the weight systems $Q_{\Delta}$ and $Q_{\Delta^*}$ differ only by a factor. Moreover, the simplices~$\Delta$ and $\Delta^*$ have the same Gorenstein index. Thus the associated unit fraction partitions $A(\Delta)$ and $A(\Delta^*)$ coincide. Item~(i) is an immediate consequence of (ii) and (iii). We prove (ii). Remark \ref{rem:ws-props} (i) together with Lemma \ref{lemma:ws-connection} yields
\begin{equation*}
    \Vol(\Delta^*)
    \ = \
    |Q_{\Delta^*}|
    \ = \
    \frac{|Q_\Delta|^d}{q_0 \cdots q_d}.
\end{equation*}
We multiply this by the multiplicity $\lambda(\Delta)$ and use the identity $\lambda(\Delta) = g|Q_\Delta|/t_{A(\Delta)}$ to obtain
\begin{equation*}
    \lambda(\Delta) \Vol(\Delta^*)
    \ = \
    \frac{g |Q_\Delta|}{t_{A(\Delta)}}\,\frac{|Q_\Delta|^d}{q_0 \cdots q_d}
    \ = \
    \frac{1}{g^d} \frac{\alpha_0 \cdots \alpha_d}{\lcm(\alpha_0,\dots,\alpha_d)}.
\end{equation*}
Switching the roles of $\Delta$ and $\Delta^*$ and using the fact that they have the same unit fraction partition, we obtain $\lambda(\Delta) \Vol(\Delta^*) = \lambda(\Delta^*) \Vol(\Delta)$. We prove (iii). Let~$Q_\Delta^\red = (q'_0, \dots, q'_d)$. With Lemma \ref{lemma:ws-connection} we obtain:
\begin{equation*}
    \lambda(\Delta^*) \ = \ \lambda(\Delta) m_{Q_\Delta} \ = \ \lambda(\Delta) \frac{|Q_\Delta|^{d-1}}{q_0 \cdots q_d} \ = \ \frac{1}{\lambda(\Delta)} \frac{|Q_{\Delta}^\red|^{d-1}}{q'_0 \cdots q'_d} \ = \ \frac{1}{\lambda(\Delta)} m_{Q_{\Delta}^\red},
\end{equation*}
Multiplying both sides by $\lambda(\Delta)$ yields the identity $\lambda(\Delta)\lambda(\Delta^*) = m_{Q_{\Delta}^\red}$. We obtain:
\begin{equation*}
    m_{Q_{\Delta}^\red}
    \ = \
    \frac{1}{|Q_{\Delta}^\red|^2} \frac{|Q_{\Delta}^\red|^{d+1}}{q'_0 \cdots q'_d}
    \ = \
    \frac{\lambda(\Delta)^2}{|Q_{\Delta}|^2} \frac{|Q_{\Delta}|^{d+1}}{q_0 \cdots q_d}
    \ = \
    \frac{1}{g^{d-1}} \frac{\alpha_0 \cdots \alpha_d}{\lcm(\alpha_0,\dots,\alpha_d)^2}.
\end{equation*}
\end{proof}

It will be convenient to assign unit fraction partitions directly to weight systems and vice versa.

\begin{definition}
The \emph{index of a weight system} $Q = (q_0,\dots,q_d)$ is the positive integer
\begin{equation*}
    g(Q) \ := \ \min \left( \, k \in \ZZ_{\ge 1} ; \ k|Q|/q_i \in \ZZ \text{ for all } i = 0,\dots,d \, \right).
\end{equation*}
\end{definition}

\begin{remark}
The index $g(Q_\Delta)$ of the weight system $Q_\Delta$ of an IP simplex $\Delta$ is always a divisor of it's Gorenstein index $g(\Delta)$. They might coincide, although frequently $g(Q_\Delta)$ is a true divisor of $g(\Delta)$.
\end{remark}

\begin{proposition}\label{prop:ws-ufp}
See \cite{Bae22}*{Prop.~3.6}. Let $Q = (q_0,\dots,q_d)$ a weight system of length $d$ and let $A = (\alpha_0,\dots,\alpha_d)$ a unit fraction partition of $g \in \ZZ_{\ge 1}$ of length~$d+1$. Set
\begin{equation*}
    A(Q)
    \ := \
    \left( \frac{g |Q|}{q_0}, \dots, \frac{g |Q|}{q_d} \right),
    \qquad\qquad
    Q(A)
    \ := \
    \left( \frac{t_A}{\alpha_0}, \dots, \frac{t_A}{\alpha_d} \right).
\end{equation*}
Then $A(Q)$ is a reduced unit fraction partition of $g(Q)$ and $Q(A)$ is a reduced weight system of length $d$ and index $g(Q(A)) = g/\lambda(A)$. Moreover, we have
\begin{equation*}
    Q(A(Q)) \ = \ Q^\red, \qquad\qquad A(Q(A)) \ = \ A^\red
\end{equation*}
and this correspondence respects well-formedness.
\end{proposition}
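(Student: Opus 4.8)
The plan is to verify the five assertions by direct computation, the recurring device being a prime-by-prime comparison of $p$-adic valuations; throughout I abbreviate $h := g(Q)$ for the index of $Q$, so that by definition $h$ is the least positive integer with $h|Q|/q_i \in \ZZ$ for all $i$ (this is the $g$ appearing in the definition of $A(Q)$). First I would dispose of the arithmetic identities. For $A(Q) = (\beta_0,\dots,\beta_d)$ with $\beta_i = h|Q|/q_i$ one has $\sum_i \beta_i^{-1} = \sum_i q_i/(h|Q|) = |Q|/(h|Q|) = 1/h$, so $A(Q)$ is a ufp of $h$; dually, for $Q(A) = (t_A/\alpha_0,\dots,t_A/\alpha_d)$ one has $|Q(A)| = t_A \sum_i \alpha_i^{-1} = t_A/g$, a fact I will reuse. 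Integrality is immediate: $\beta_i \in \ZZ$ by the defining property of $h$, and $t_A/\alpha_i \in \ZZ$ because $\alpha_i \mid t_A$.

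Next the reducedness statements. If $\ell$ divides $\gcd(h,\beta_0,\dots,\beta_d)$, then $(h/\ell)|Q|/q_i = \beta_i/\ell \in \ZZ$ for every $i$, so $h/\ell$ competes in the minimum defining $g(Q) = h$, forcing $\ell = 1$; hence $A(Q)$ is reduced. For $Q(A)$, a prime $p$ dividing every $t_A/\alpha_i$ would give $v_p(\alpha_i) < v_p(t_A)$ for all $i$, contradicting $v_p(t_A) = \max_i v_p(\alpha_i)$; hence $Q(A)$ is reduced. For its index, the condition $k|Q(A)|/(t_A/\alpha_i) = k\alpha_i/g \in \ZZ$ is equivalent to $\bigl(g/\gcd(g,\alpha_i)\bigr) \mid k$, whence $g(Q(A)) = \lcm_i \bigl(g/\gcd(g,\alpha_i)\bigr)$. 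Comparing valuations, $v_p$ of the right-hand side equals $\max_i \max\bigl(0,\, v_p(g) - v_p(\alpha_i)\bigr) = \max\bigl(0,\, v_p(g) - \min_i v_p(\alpha_i)\bigr)$, which is precisely $v_p(g/\lambda(A))$ since $v_p(\lambda(A)) = \min\bigl(v_p(g),\, \min_i v_p(\alpha_i)\bigr)$; thus $g(Q(A)) = g/\lambda(A)$.

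For the inversion formulas, observe that $\beta_i q_i = h|Q|$ is independent of $i$, so $Q$ and $Q(A(Q)) = (t_{A(Q)}/\beta_0,\dots,t_{A(Q)}/\beta_d)$ are proportional; as $Q(A(Q))$ is reduced by the previous paragraph, uniqueness of the reduction of a weight system gives $Q(A(Q)) = Q^{\red}$. Conversely, writing $h' := g(Q(A)) = g/\lambda(A)$ and using $|Q(A)| = t_A/g$, the $i$-th entry of $A(Q(A))$ is $h'|Q(A)|/(t_A/\alpha_i) = (g/\lambda(A))(t_A/g)(\alpha_i/t_A) = \alpha_i/\lambda(A)$, i.e. $A(Q(A)) = A^{\red}$.

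Finally, for well-formedness I would first record the two $p$-adic characterizations: a reduced weight system $Q$ is well-formed precisely when, for every prime $p$, at least two of the valuations $v_p(q_0),\dots,v_p(q_d)$ equal their minimum (which is $0$ by reducedness), and a ufp $A$ is well-formed precisely when, for every prime $p$, the maximum of $v_p(\alpha_0),\dots,v_p(\alpha_d)$ is attained at least twice. Since $v_p(\beta_i) = v_p(h|Q|) - v_p(q_i)$ differs from $-v_p(q_i)$ by an $i$-independent constant, the indices realizing $\max_i v_p(\beta_i)$ are exactly those realizing $\min_i v_p(q_i)$; likewise $v_p(t_A/\alpha_i) = v_p(t_A) - v_p(\alpha_i)$ shows the indices realizing $\min_i v_p(t_A/\alpha_i)$ are exactly those realizing $\max_i v_p(\alpha_i)$. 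Hence $Q$ is well-formed iff $A(Q)$ is, and $A$ is well-formed iff $Q(A)$ is. I do not expect a genuine obstacle here: the one point demanding care is to read $g(Q)$ in the definition of $A(Q)$ as the index of $Q$ rather than an externally given Gorenstein index, and then to keep the valuation bookkeeping in the index formula and the well-formedness step straight, since a dropped $\max$-with-zero or a flipped inequality is the only way this goes wrong.
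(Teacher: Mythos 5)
Your verification is correct: the summation identity giving that $A(Q)$ is a ufp of $g(Q)$, the minimality argument for reducedness of $A(Q)$, the valuation argument for reducedness of $Q(A)$ and for $g(Q(A))=\lcm_i\bigl(g/\gcd(g,\alpha_i)\bigr)=g/\lambda(A)$, the proportionality-plus-uniqueness-of-reduction argument for $Q(A(Q))=Q^{\red}$, the direct computation $A(Q(A))_i=\alpha_i/\lambda(A)$, and the two $p$-adic characterizations of well-formedness all check out, and you correctly resolved the notational point that the $g$ in the definition of $A(Q)$ must be read as the index $g(Q)$. Note that the paper itself supplies no proof here — it cites \cite{Bae22}*{Prop.~3.6} — so there is nothing in this source to compare against; your self-contained valuation-based verification is a valid substitute for the omitted argument.
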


\goodbreak

\section{Sharp bounds on unit fraction partitions}\label{section:sharp-bounds-ufp}

For a unit fraction partition $A = (\alpha_1,\dots,\alpha_n)$ of length~$n$ we consider
\begin{equation*}
    F_k(A) \ := \ \frac{\alpha_1 \cdots \alpha_n}{\lcm(\alpha_1,\dots,\alpha_n)^{n-k}}.
\end{equation*}
This is the expression that shows up in Proposition \ref{prop:uf-vol} for~$k=n-2,n-1$ and~$n$. We give sharp bounds on these expressions among all unit fraction partitions of $g$ and completely describe the unit fraction partitions attaining those bounds.

\begin{definition}
The \emph{$g$-Sylvester sequence} $S_g = (s_{g,1},s_{g,2},\dots)$ and the \emph{truncated~$g$-Sylvester sequence} $T_g = (t_{g,1},t_{g,2},\dots)$ for a positive integer $g$ are given by
\begin{equation*}
    s_{g,1} \ := \ g+1, \qquad s_{g,k+1} \ := \ s_{g,k} (s_{g,k} - 1) + 1, \qquad t_{g,k} := s_{g,k} - 1.
\end{equation*}
\end{definition}

\begin{theorem}\label{thm:ufp-bounds}
Let $g \ge 1$ and let $n \ge 3$. Let~$A = (\alpha_1,\dots,\alpha_n)$ a unit fraction partition of~$g$ with $\alpha_1 \le \dots \le \alpha_n$. For the value of $F_k$ on $A$ the following hold:
\begin{enumerate}
    \item
    $F_n(A) \le t_{g,n}^2/g$ and equality holds if and only if $A = (s_{g,1},\dots,s_{g,n-1},t_{g,n})$.
    
    \item
    Assume $k = n-1$. If $(n,g) = (3,1)$, then we have $F_{n-1}(A) \le 9$ and equality holds if and only if~$A = (3,3,3)$. In all other cases we have
    \begin{equation*}
        F_{n-1}(A) \ \le \ \frac{2\, t_{g,n-1}^2}{g}.
    \end{equation*}
    Equality holds if and only if $A$ is one of the following unit fraction partitions:
    \begin{equation*}
        \qquad\qquad
        (6,6,6),
        \qquad
        (2,6,6,6),
        \qquad
        (s_{g,1}, \dots, s_{g,n-2}, 2t_{g,n-1},2t_{g,n-1}).
    \end{equation*}
     
    \item
    Assume $k = n-2$. If $n = 4$ and $g \in \{1,2\}$, then we have $F_{n-2}(A) \ \le \ 16g^2$ and equality holds if and only if~$A = (4g,4g,4g,4g)$. If $(n,g) = (5,1)$, then we have $F_{n-2}(A) \ \le \ 128$ and equality holds if and only if~$A = (2,8,8,8,8)$.
    In all other cases we have
    \begin{equation*}
        F_{n-2}(A) \ \le \ \frac{3\, t_{g,n-2}^2}{g}.
    \end{equation*}
    Equality holds if and only if $A$ is one of the following unit fraction partitions:
    \begin{equation*}
        \qquad\qquad
        (12,12,12,12),
        \qquad
        (s_{g,1}, \dots, s_{g,n-3}, 3t_{g,n-2}, 3t_{g,n-2}, 3t_{g,n-2}).
    \end{equation*}
\end{enumerate}
\end{theorem}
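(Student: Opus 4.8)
The plan is to first isolate the arithmetic of the $g$-Sylvester sequence, then prove (i) by induction on the length $n$ with $g$ ranging over all positive integers, and finally bootstrap (ii) and (iii) from (i) by tracking how $\lcm(\alpha_1,\dots,\alpha_n)$ enters the quantities $F_k$.

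First I would record the identities that make everything run. Writing $t_{g,m} = s_{g,m}-1$ one has the telescoping relation $\tfrac1g = \sum_{k=1}^{m-1}\tfrac1{s_{g,k}} + \tfrac1{t_{g,m}}$, the product formula $t_{g,m} = g\,\prod_{k=1}^{m-1}s_{g,k}$, the divisibilities $g \mid t_{g,m}$ and $s_{g,k}\mid t_{g,m}$ for $k<m$, coprimality of $s_{g,k}$ with every later term, and doubly-exponential growth of $S_g$. The one genuinely structural fact is the shift identity: the $t_{g,2}$-Sylvester sequence is $(s_{g,2},s_{g,3},\dots)$ and its truncation is $(t_{g,2},t_{g,3},\dots)$; more generally $S_{t_{g,m}}$ is the $m$-th tail of $S_g$. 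With these, a direct computation shows that each partition listed in (i)--(iii) is a unit fraction partition of $g$ and attains the stated value of $F_k$; here one repeatedly uses $s_{g,k}\mid t_{g,m}$ to see that $\lcm$ equals the largest part. In particular the bounds are sharp, so it remains to prove that they are upper bounds and to pin down the equality cases.

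For (i), where $F_n(A) = \alpha_1\cdots\alpha_n$, I would induct on $n$. Sort $\alpha_1\le\cdots\le\alpha_n$; since two further terms are present, $\alpha_1\ge g+1 = s_{g,1}$. The key point is that $\alpha_1 = s_{g,1}$ is the \emph{only} value of the smallest part for which the remaining parts again form a unit fraction partition, namely of $t_{g,2} = g\,s_{g,1}$: indeed $\tfrac1g-\tfrac1{\alpha_1}$ is a unit fraction exactly when $\alpha_1 = g+1$. So if $\alpha_1 = s_{g,1}$, then $(\alpha_2,\dots,\alpha_n)$ is a unit fraction partition of $t_{g,2}$ of length $n-1$, and the inductive bound, combined with the shift identity (which gives $t_{t_{g,2},\,n-1} = t_{g,n}$), yields $\alpha_1\cdots\alpha_n = s_{g,1}\cdot(\alpha_2\cdots\alpha_n)\le s_{g,1}\cdot t_{g,n}^2/t_{g,2} = t_{g,n}^2/g$, with equality forcing $(\alpha_2,\dots,\alpha_n) = (s_{g,2},\dots,s_{g,n-1},t_{g,n})$. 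The remaining case $\alpha_1\ge s_{g,1}+1$ I would dispose of by an exchange argument: one modifies $A$ into another length-$n$ unit fraction partition of $g$ with strictly larger product, the prototype being the replacement of a pair of parts by two parts with the same reciprocal sum but smaller individual values (as in $\tfrac{2}{2m} = \tfrac1{m+1} + \tfrac1{m(m+1)}$, which strictly increases the product once $m\ge 2$); since the product is a bounded positive integer, iterating forces $\alpha_1 = s_{g,1}$. The induction bottoms out at $n\le 2$, which is immediate.

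Parts (ii) and (iii) run on the same template, carrying $L := \lcm(\alpha_1,\dots,\alpha_n)$ along. Since $L\ge\alpha_n\ge\alpha_{n-1}$ one has $F_{n-1}(A)\le\prod_{i=1}^{n-1}\alpha_i$ and $F_{n-2}(A)\le\prod_{i=1}^{n-2}\alpha_i$, with equality exactly when every part divides $\alpha_n$; a short argument (shrinking $\alpha_n$ to $L$ only increases $F_{n-k}$) shows a maximizer lies in that regime, after which the truncated products are again governed by a Sylvester-type induction, now producing the shifted length $n-1$, resp.\ $n-2$, and the extra factor $2$, resp.\ $3$. Forcing equality in each step yields the generic extremizers $(s_{g,1},\dots,s_{g,n-2},2t_{g,n-1},2t_{g,n-1})$ and $(s_{g,1},\dots,s_{g,n-3},3t_{g,n-2},3t_{g,n-2},3t_{g,n-2})$. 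The main obstacle is the equality bookkeeping. There are finitely many small pairs --- $(n,g) = (3,1)$ for (ii) and $(n,g)\in\{(4,1),(4,2),(5,1)\}$ for (iii) --- where the generic extremizer fails to be maximal (there the greedy choice $\alpha_1 = s_{g,1}$ is simply not optimal) and the true maximum, attained by a ``balanced'' partition such as $(3,3,3)$, $(4g,4g,4g,4g)$ or $(2,8,8,8,8)$, must be located by explicit enumeration; moreover for special $g$ one must notice the accidental extremizers $(6,6,6)$, $(2,6,6,6)$, $(12,12,12,12)$ that tie the generic bound and list them separately. Making the exchange argument deliver a strict inequality away from precisely the partitions in the statement, and completing this sporadic case analysis, is where the real work lies.
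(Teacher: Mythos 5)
Your plan is a genuinely different route from the paper (which relaxes the problem to a continuous Izhboldin--Kurliandchik optimization over the compact set $A_g^n$ of Definition \ref{def:sets-Agn}, using Lemma \ref{lemma:ufp_in_Agn}, the perturbation analysis of Lemma \ref{lemma:structure-minimum-fk}, and the comparison inequalities of Lemma \ref{lemma:syl-ineq-technical}), and parts of it are sound: the verification that the listed partitions are unit fraction partitions attaining the stated values, and the ``greedy branch'' of your induction for (i) --- if $\alpha_1 = s_{g,1}$ then the tail is a ufp of $t_{g,2}$ and the shift identity $t_{t_{g,2},\,n-1} = t_{g,n}$ closes the induction with the correct equality case. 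However, there are genuine gaps. First, your stated key fact is false: for $g \ge 2$ the quantity $\tfrac1g - \tfrac1{\alpha_1}$ can be a unit fraction for several values $\alpha_1 > g+1$ (e.g.\ $g=2$, $\alpha_1 = 4$ or $6$), so there is no clean dichotomy ``$\alpha_1 = s_{g,1}$ or else exchange.'' Second, and more seriously, the entire non-greedy case $\alpha_1 > s_{g,1}$ rests on an exchange argument of which you only give a prototype (splitting two equal even parts); you do not show that every non-extremal ufp of the given length admits a length-preserving, product-increasing modification, and this is exactly the hard integrality problem that the paper circumvents by passing to $A_g^n$, where the constraints $\PS(g,k)$ (which hold for ufps by Lemma \ref{lemma:ufp_in_Agn}) make the continuous minimum of $f_k$ coincide with the discrete extremum.

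For (ii) and (iii) the gap widens. There the truncated product has maximizers with $\alpha_1 > s_{g,1}$ --- $(3,3,3)$, $(6,6,6)$, $(4g,4g,4g,4g)$, $(12,12,12,12)$ --- so an exchange argument cannot simply ``force $\alpha_1 = s_{g,1}$''; it must fail precisely at these partitions, and you give no criterion for when it fails. Moreover your induction for (ii) (resp.\ (iii)) bottoms out at length $3$ (resp.\ $4$) for \emph{every} Gorenstein index $g$, since the induction replaces $(g,n)$ by $(t_{g,2},n-1)$; thus the ``sporadic case analysis'' is not a finite enumeration of small $(n,g)$ but an infinite family of base cases requiring a uniform argument --- this is exactly the role played in the paper by Proposition \ref{prop:fk-minimum-general} (minimizers are among the finitely many points $y(g,n,l)$, $l \le k$) together with the inductive inequalities of Lemma \ref{lemma:syl-ineq-technical}, whose equality analysis is what mechanically produces the exceptional pairs $(3,1)$, $(4,1)$, $(4,2)$, $(5,1)$ and the tied extremizers $(6,6,6)$, $(2,6,6,6)$, $(12,12,12,12)$. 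As it stands, your proposal establishes sharpness and one branch of (i), but the upper bounds themselves and the classification of equality cases --- the core of the theorem --- are not proved.
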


\begin{remark}\label{rem:syl-prop}
In the literature the sequence $S_1 = (s_{1,1},s_{1,2},\dots)$ is known as \emph{Sylvester's sequence}, see for instance \cite{OEIS-syl}. Our naming for the sequences~$S_g$ and $T_g$ is derived from that. We list some properties of the sequences~$S_g$ and $T_g$ that we will use frequently.
\begin{enumerate}
    \item
    For any $n \ge 1$ we have
    \begin{equation*}
        \frac{1}{g} \ = \ \frac{1}{s_{g,1}} + \dots + \frac{1}{s_{g,n-1}} + \frac{1}{t_{g,n}}.
    \end{equation*}
    
    \item
    For any $n \ge 1$ we have
    \begin{equation*}
        \frac{g}{t_{g,n}} \ = \ \frac{1}{s_{g,1}} \cdots \frac{1}{s_{g,n-1}}.
    \end{equation*}

    \item
    For any $g,n \ge 1$ we have $s_{g,n+1} > s_{g,n}$ and $s_{g+1,n} > s_{g,n}$.
    
    \item
    For $i\ne j$ we have $\gcd(s_{g,i},s_{g,j}) = 1$.
\end{enumerate}
\end{remark}

The strategy for the proof of Theorem \ref{thm:ufp-bounds} is as follows: For given $g$ and $n$ we define a certain compact subset~$A_g^n \subseteq \RR^n$, which has the property that for any unit fraction partition $A = (\alpha_1,\dots,\alpha_n)$ of~$g$ with~$\alpha_1 \le \dots \le \alpha_n$, the point~$(1/\alpha_1,\dots,1/\alpha_n)$ is contained in $A_g^n$. For $k \in \{n-2,n-1,n\}$ we minimize the function $f_k(x) := x_1\cdots x_k$ on $A_g^n$ and show that it attains its minimum precisely at the points corresponding to the unit fraction partitions listed in Theorem \ref{thm:ufp-bounds}. This strategy for minimizing functions on unit fraction partitions was first used by Izhboldin and Kurliandchik in \cite{IzKu95}, see also~\cites{AvKrNi15,Ni07} for generalizations. In \cite{AvKrNi15} the authors call this type of optimization problems \emph{Izhboldin-Kurliandchik problems}. In the following we adopt their naming convention.

\begin{definition}\label{def:sets-Agn}
Let $g,n \ge 1$. We denote by $A_g^n \subseteq \RR^n$ the compact set of all points~$(x_1,\dots,x_n) \in \RR^n$ that satisfy the following conditions:
\begin{enumerate}
    \item[(A1)] $x_1 \ge \dots \ge x_n \ge 0$.
    \item[(A2)] $x_1 + \dots + x_n = 1/g$.
    \item[(A3)] $x_1\cdots x_k \le g (x_{k+1} + \dots + x_n)$ for all $k = 1, \dots, n-1$.
\end{enumerate}
For $x \in \RR^n$ we denote by $\SUM(g)$ the equality $x_1+\dots+x_n = 1/g$, by~$\ORD(k)$ we denote the inequality $x_k \ge x_{k+1}$ and by $\PS(g,k)$ the inequality~$x_1\cdots x_k~\le~g (x_{k+1}~+~\dots~+~x_n)$. Thus the set $A_g^n$ consists of the points~$(x_1,\dots,x_n) \in \RR^n_{\ge 0}$ that satisfy the equality $\SUM(g)$ and the inequalities~$\ORD(k)$ and $\PS(g,k)$ for all $k=1,\dots,n-1$.
\end{definition}

\begin{lemma}\label{lemma:ufp_in_Agn}
See \cite{Bae22}*{Lemma~4.3}. Let $g \ge 1$ and $n \ge 1$. For any unit fraction partition~$A = (\alpha_1,\dots,\alpha_n)$ of $g$ with~$\alpha_1 \le \dots \le \alpha_n$ the point $(1/\alpha_1,\dots,1/\alpha_n)$ is contained in $A_{g}^n$.
\end{lemma}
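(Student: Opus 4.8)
The plan is to take the candidate point $x := (1/\alpha_1,\dots,1/\alpha_n)$ and verify the three defining conditions (A1), (A2), (A3) of $A_g^n$ one at a time; writing $x_i := 1/\alpha_i$, the first two are immediate. Since the $\alpha_i$ are positive integers with $\alpha_1 \le \dots \le \alpha_n$, their reciprocals satisfy $x_1 \ge \dots \ge x_n > 0$, so $\ORD(k)$ holds for all $k$ and $x \in \RR^n_{\ge 0}$; this is (A1). The defining equation of a unit fraction partition of $g$ reads $1/g = \sum_{i=1}^n 1/\alpha_i = x_1 + \dots + x_n$, which is exactly $\SUM(g)$; this is (A2).

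The only condition requiring an argument is (A3), i.e. the inequalities $\PS(g,k)$ for $k = 1,\dots,n-1$. I would fix such a $k$ and abbreviate $P := \alpha_1 \cdots \alpha_k$, so that $x_1 \cdots x_k = 1/P$. The idea is to rewrite the tail sum $x_{k+1} + \dots + x_n$ over the common denominator $gP$: using $\SUM(g)$,
\begin{equation*}
    x_{k+1} + \dots + x_n
    \ = \
    \frac1g - \sum_{i=1}^k \frac1{\alpha_i}
    \ = \
    \frac{P - g\sum_{i=1}^k (P/\alpha_i)}{gP}.
\end{equation*}
Each $P/\alpha_i$ (for $i \le k$) is an integer, so the numerator $N := P - g\sum_{i=1}^k (P/\alpha_i)$ is an integer; and since $k \le n-1$, the left-hand side is a sum of strictly positive terms, hence strictly positive, forcing $N \ge 1$. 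Therefore
\begin{equation*}
    x_{k+1} + \dots + x_n
    \ = \
    \frac{N}{gP}
    \ \ge \
    \frac{1}{gP}
    \ = \
    \frac{x_1 \cdots x_k}{g},
\end{equation*}
which rearranges to $x_1 \cdots x_k \le g(x_{k+1} + \dots + x_n)$, i.e. $\PS(g,k)$. Letting $k$ run through $1,\dots,n-1$ yields (A3), and together with (A1) and (A2) this gives $x \in A_g^n$.

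I do not anticipate a genuine obstacle: this is essentially a bookkeeping lemma, and the computation above is the whole content. The one point worth flagging is that the denominator $gP$ used above need not be in lowest terms; this is harmless, since the argument only uses that the integer numerator $N$ is positive, hence at least $1$. (If a sharper estimate were ever needed one could first cancel $\gcd(N, gP)$, but that is not required here.)
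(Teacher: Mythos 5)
Your proof is correct, and it is the standard argument for this lemma: the paper itself gives no proof here but defers to \cite{Bae22}*{Lemma~4.3}, whose proof is exactly your integrality observation that the tail sum $1/g-\sum_{i\le k}1/\alpha_i$ is a positive rational with denominator dividing $g\alpha_1\cdots\alpha_k$, hence at least $1/(g\alpha_1\cdots\alpha_k)=x_1\cdots x_k/g$. Nothing to add.
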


In the following Proposition we gather important properties of the set $A_g^n$. It is a generalization of~\cite{AvKrNi15}*{Lemma~4.1}.

\begin{proposition}\label{prop:structure-Agn}
Let $g \ge 1$ and $n \ge 3$. For any point $x \in A_g^n$ the following hold:
\begin{enumerate}
    \item We have $1/g > x_1 \ge x_n > 0$.
    \item The inequalities $\ORD(k)$ and $\PS(g,k)$ cannot simultaneously be fulfilled with equality.
    \item If for some $1 \le k \le n-1$ the inequality $\PS(g,i)$ is fulfilled with equality for all~$i \le k$, then~$x_i = 1/s_{g,i}$ holds for all $i = 1,\dots,k$.
\end{enumerate}
\end{proposition}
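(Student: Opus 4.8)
The plan is to handle the three items in order, each time exploiting the equality $\SUM(g)$ together with the constraints $\ORD(k)$ and $\PS(g,k)$. For item (i), the lower bound $x_n > 0$ is the crux; once it is known, $x_1 \ge x_n > 0$ is immediate from $\ORD(1),\dots,\ORD(n-1)$, and $x_1 < 1/g$ follows because $x_1 = 1/g - (x_2+\dots+x_n)$ and $x_2+\dots+x_n > 0$ (using $n \ge 3$ and $x_n > 0$). To see $x_n > 0$, I would argue by contradiction: if $x_n = 0$, then $\PS(g,n-1)$ reads $x_1 \cdots x_{n-1} \le g \cdot 0 = 0$, forcing some $x_j = 0$ with $j \le n-1$; by $\ORD$ this propagates down to $x_n$ being the smallest, so in fact $x_k = \dots = x_n = 0$ for some minimal $k \ge 2$ with $x_{k-1} > 0$. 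Then $\PS(g,k-1)$ gives $x_1\cdots x_{k-1} \le g(x_k + \dots + x_n) = 0$, contradicting $x_{k-1} > 0$ (and hence $x_1,\dots,x_{k-1} > 0$). So all coordinates are strictly positive.

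For item (ii), suppose for some $k$ both $\ORD(k)$ and $\PS(g,k)$ hold with equality, i.e.\ $x_k = x_{k+1}$ and $x_1 \cdots x_k = g(x_{k+1} + \dots + x_n)$. The idea is to compare with $\PS(g,k+1)$ (valid since $k \le n-1$; if $k = n-1$ one uses the same bookkeeping with the convention that the right-hand side of $\PS(g,n)$ would be an empty sum, handled separately). Multiplying the equality $x_1\cdots x_k = g(x_{k+1}+\dots+x_n)$ by $x_{k+1}$ and using $x_{k+1} = x_k \le x_1 \cdots x_{k-1} \cdot x_k / (\text{something})$—more precisely, I would instead derive a contradiction by plugging the equality into $\PS(g,k+1)$: we get
\begin{equation*}
    x_1 \cdots x_k x_{k+1} \ = \ g(x_{k+1}+\dots+x_n)\, x_{k+1} \ \le \ g(x_{k+2}+\dots+x_n),
\end{equation*}
which rearranges to $x_{k+1}\big(g(x_{k+1}+\dots+x_n) + g(x_{k+2}+\dots+x_n)\big) \le g(x_{k+2}+\dots+x_n)$ after moving terms; since $x_{k+1} > 0$ by (i) and since $x_{k+1}+\dots+x_n \ge x_{k+2}+\dots+x_n > 0$, dividing out yields $x_{k+1}\cdot(\text{positive}) < x_{k+2}+\dots+x_n$ with a coefficient strictly exceeding $1$—contradiction. (I expect the precise inequality chain here to need care about the $k=n-1$ boundary and about whether $\PS(g,k+1)$ is an equality or strict; this bookkeeping is the main obstacle in the whole proof.) The short conceptual reason is that $x_k = x_{k+1}$ together with $\PS(g,k)$-equality would force $\PS(g,k+1)$ to be violated, because passing from $k$ to $k+1$ multiplies the left side by $x_{k+1}$ but shrinks the right side, and the factor $x_{k+1}$ is too small relative to what $\PS(g,k)$-equality demands.

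For item (iii), I argue by induction on $k$. If $\PS(g,1)$ holds with equality, then $x_1 = g(x_2 + \dots + x_n) = g(1/g - x_1) = 1 - gx_1$, so $x_1(1+g) = 1$, giving $x_1 = 1/(g+1) = 1/s_{g,1}$ as claimed. For the inductive step, assume $x_i = 1/s_{g,i}$ for $i < j \le k$ and that $\PS(g,j)$ holds with equality. Then
\begin{equation*}
    \frac{1}{s_{g,1}} \cdots \frac{1}{s_{g,j-1}}\, x_j
    \ = \
    x_1 \cdots x_j
    \ = \
    g(x_{j+1} + \dots + x_n)
    \ = \
    g\Big( \frac{1}{g} - \frac{1}{s_{g,1}} - \dots - \frac{1}{s_{g,j-1}} - x_j \Big).
\end{equation*}
Using Remark \ref{rem:syl-prop} (ii), the product $1/(s_{g,1}\cdots s_{g,j-1})$ equals $g/t_{g,j}$, and using Remark \ref{rem:syl-prop} (i) the quantity $1/g - 1/s_{g,1} - \dots - 1/s_{g,j-1}$ equals $1/t_{g,j}$; substituting gives $(g/t_{g,j})\, x_j = g(1/t_{g,j} - x_j)$, hence $x_j(g/t_{g,j} + g) = g/t_{g,j}$, i.e.\ $x_j(1/t_{g,j} + 1) = 1/t_{g,j}$, so $x_j = 1/(t_{g,j}+1) = 1/s_{g,j}$. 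This closes the induction and proves (iii).
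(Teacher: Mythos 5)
Items (i) and (iii) of your proposal are correct and follow essentially the paper's own route: the propagation-of-zeros contradiction for $x_n>0$ (your explicit derivation of the strict bound $x_1<1/g$ from $x_2+\dots+x_n>0$ is a small point the paper leaves implicit), and the induction for (iii) using Remark \ref{rem:syl-prop} (i), (ii) is exactly the paper's computation.

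Item (ii), however, has a genuine gap. Your displayed chain never uses the hypothesis that $\ORD(k)$ holds with equality: it only combines the equality in $\PS(g,k)$ with the inequality $\PS(g,k+1)$, and no contradiction can follow from those two facts alone, since $A_g^n$ contains points at which $\PS(g,i)$ is an equality for all $i\le k$ while $\ORD(k)$ is strict (for instance the points $y(g,n,k+1)$ used later in the paper). Concretely, with $S:=x_{k+2}+\dots+x_n$ your chain yields only $(x_{k+1}+S)\,x_{k+1}\le S$, i.e. $x_{k+1}^2\le S(1-x_{k+1})$, which is perfectly satisfiable (take $x_{k+1}$ small compared to $S$); the subsequent ``rearranges to'' step is not a rearrangement at all, since it adds the nonnegative term $g\,S\,x_{k+1}$ to the left-hand side and thus asserts a strictly stronger inequality that does not follow, and even that stronger inequality would only bound $x_{k+1}$ by $1/2$ rather than produce a contradiction. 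The boundary case $k=n-1$, where there is no $\PS(g,n)$ to invoke, is also left unhandled. The hypothesis $x_{k+1}=x_k$ must enter the argument essentially; the paper does this by substituting it into $\PS(g,k)$ itself and writing
\begin{equation*}
    0 \ = \ g(x_{k+1}+\dots+x_n)-x_1\cdots x_k \ = \ g(x_{k+2}+\dots+x_n)+x_k\bigl(g-x_1\cdots x_{k-1}\bigr),
\end{equation*}
which is strictly positive because $x_k>0$ and, for $k\ge 2$, $x_1\cdots x_{k-1}<1\le g$ by (i) (for $k=1$ and $g=1$ the first summand is positive since $n\ge 3$), giving the contradiction. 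As written, your proof of (ii) does not go through.
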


\begin{proof}
We prove (i). Since the $x_i$ are all non-negative, the equality $\SUM(g)$ implies that~$x_1 \le 1/g$ holds. Assume $x_n = 0$. Then by the inequality $\PS(g,n-1)$ we have~$x_1 \cdots x_{n-1} = 0$. Thus $x_i = 0$ holds for some $i=1,\dots,n-1$. The inequality~$\ORD(j)$ thus implies that $x_j = 0$ holds for all $j=i,\dots,n$. We can repeat this argument to obtain~$x_1 = \dots = x_n = 0$, which contradicts the equality~$\SUM(g)$. Thus~$x_n > 0$ holds. We prove (ii). Assume that for some $k$ the inequalities $\ORD(k)$ and $\PS(g,k)$ hold simultaneously with equality. Using $x_{k+1} = x_k$, we may then write
\begin{equation*}
    0 \ = \ g (x_{k+1} + \dots + x_n) - x_1 \cdots x_k \ = \ g( x_{k+2} + \dots + x_n ) + x_k (g - x_1 \cdots x_{k-1}).
\end{equation*}
The first summand on the right hand side is non-negative, the second summand is positive. This means that they cannot add to zero, a contradiction. Thus $\ORD(k)$ and $\PS(g,k)$ cannot simultaneously be fulfilled with equality. We prove (iii) by induction on $i$. In case~$i = 1$, if $\PS(g,1)$ holds with equality, we get
\begin{equation*}
    x_1 \ = \ g ( x_2 + \dots + x_n ) \ = \ g \left( \frac{1}{g} - x_1 \right).
\end{equation*}
Solving this equation for $x_1$ yields $x_1 = 1/s_{g,1}$. Now assume $i > 1$ and $x_j = 1/s_{g,j}$ holds for all $j \le i$. If $\PS(g,i)$ holds with equality, we obtain
\begin{equation*}
    \frac{g}{t_{g,i}} x_i \ = \ x_1 \cdots x_i \ = \ g ( x_{i+1} + \dots + x_{n} )
    \ = \ g \left( \frac{1}{g} - x_1 - \dots - x_i \right) \ = \ g \left( \frac{1}{t_{g,i}} - x_i \right).
\end{equation*}
Solving this equation for $x_i$ yields $x_i = 1/s_{g,i}$. This completes the proof.
\end{proof}

\begin{definition}
Let $g \ge 1$ and $n \ge 3$. For $k = 1, \dots, n$ we define a function $f_k$ by
\begin{equation*}
    f_k \colon A_g^n \ \rightarrow \ \RR, \qquad x \ \mapsto \ x_1 \cdots x_k.
\end{equation*}
Moreover, for $g,n$ and $k$ as above, we set
\begin{equation*}
    y(g,n,k) \ := \ \left( \ \frac{1}{s_{g,1}} \ , \ \dots \ , \ \frac{1}{s_{g,k-1}} \ , \ \frac{1}{(n-k+1) t_{g,k}} \ , \ \dots \ , \ \frac{1}{(n-k+1) t_{g,k}} \ \right).
\end{equation*}
Note that $y(g,n,k)$ belongs to $A_g^n$.
\end{definition}

\begin{proposition}\label{prop:fk-minimum-general}
Let $g \ge 1$, $n \ge 3$ and $k \in \{1,\dots,n\}$. Let $y = (y_1,\dots,y_n) \in A_g^n$ such that $f_k$ attains its minimum at $y$. Then $y = y(g,n,i_0)$ holds for some $i_0 \le k$.
\end{proposition}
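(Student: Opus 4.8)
The plan is to minimize $f_k$ on the compact set $A_g^n$ by an inductive Lagrange-multiplier style analysis of which of the defining constraints $\ORD(k)$ and $\PS(g,k)$ are active at a minimizer $y$, combined with the structural facts from Proposition~\ref{prop:structure-Agn}. Since $A_g^n$ is compact and $f_k$ is continuous, a minimizer $y$ exists. The key observation is that $f_k(y) = y_1\cdots y_k > 0$ by Proposition~\ref{prop:structure-Agn}~(i), so all of $y_1,\dots,y_k$ are strictly positive; in particular no nonnegativity constraint $x_j \ge 0$ is active for $j \le k$.

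First I would analyze the role of the constraints $\PS(g,i)$ for $i = 1,\dots,n-1$ and $\ORD(j)$ for $j = 1,\dots,n-1$ and $\SUM(g)$. Let $i_0$ be the largest index in $\{0,1,\dots,k\}$ such that $\PS(g,i)$ holds with equality for all $i \le i_0$ (set $i_0 = 0$ if $\PS(g,1)$ is strict); by Proposition~\ref{prop:structure-Agn}~(iii) we then have $y_i = 1/s_{g,i}$ for $i = 1,\dots,i_0$. The goal is to show $y$ agrees with $y(g,n,i_0)$ on the remaining coordinates, i.e.\ $y_{i_0+1} = \dots = y_n = 1/\bigl((n-i_0+1)\,t_{g,i_0}\bigr)$ (interpreting $t_{g,0} = 1$). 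The strategy here is a perturbation argument: if two coordinates $y_a, y_b$ with $i_0 < a < b \le n$ satisfied $y_a > y_b$ (strict), then moving mass from $y_a$ to $y_b$ along the direction $e_b - e_a$ keeps $\SUM(g)$ satisfied, does not violate any constraint not already tight between those indices, and strictly changes $f_k$; one shows the perturbation can be chosen to strictly decrease $f_k$ (if $a \le k < b$, decreasing $y_a$ decreases $f_k$ directly; if $b \le k$, one uses that $x \mapsto x_a x_b$ with fixed sum is minimized at the extremes, contradicting minimality unless one of them hits an adjacent constraint), reducing to the case that all free coordinates past $i_0$ are equal. The remaining point is to rule out $i_0 < k$ with $\PS(g,i_0+1)$ \emph{strict} but some $\ORD$-constraint forcing $y_{i_0+1} = \dots$; here Proposition~\ref{prop:structure-Agn}~(ii) is exactly what prevents $\ORD$ and $\PS$ from being simultaneously tight, so the only binding equalities among the tail are $\ORD$-equalities, which force all tail coordinates equal — and then $\SUM(g)$ together with $y_i = 1/s_{g,i}$ for $i \le i_0$ pins down the common value via Remark~\ref{rem:syl-prop}~(i), giving $y = y(g,n,i_0)$.

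Concretely, the cleanest route is probably a double induction (on $n$, and for fixed $n$ on $k$): the case $k = 1$ is Proposition~\ref{prop:structure-Agn}-flavoured directly (minimizing $y_1$ subject to $\SUM(g)$ and $\PS(g,1)$, whose minimum is at $y_1 = 1/s_{g,1}$ when $\PS(g,1)$ is tight, i.e.\ $y = y(g,n,1)$, else the boundary behaviour forces $y = y(g,n,0)$ which cannot beat it — or handle $i_0 \in \{0,1\}$ uniformly). For the inductive step with $k \ge 2$: if $\PS(g,1)$ is tight, then $y_1 = 1/s_{g,1}$ is fixed, the remaining coordinates $(y_2,\dots,y_n)$ lie in a set of the same shape as $A_{g'}^{\,n-1}$ for the appropriate parameter after rescaling (this is the technical crux — checking that fixing $y_1$ turns the $\PS(g,k)$ constraints into the $\PS$-constraints of a lower ufp, which follows because $t_{g,k}/t_{g,1}$-type identities from Remark~\ref{rem:syl-prop}~(ii) convert $y_1 \cdots y_k = (1/s_{g,1}) y_2\cdots y_k$ into the shifted system), and we finish by induction, getting $y = y(g,n,i_0)$ with $i_0 \le k$; if $\PS(g,1)$ is strict, then by the perturbation argument above all coordinates $y_1 = \dots = y_n$ are forced equal (no $\PS$ can be tight without $\PS(g,1)$ being tight by monotonicity of partial products, and $\ORD$-ties propagate), so $y = y(g,n,0)$.

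I expect the main obstacle to be making the perturbation argument fully rigorous: one must carefully verify that a mass-transfer between two tail coordinates stays inside $A_g^n$ — in particular that it does not violate some $\PS(g,j)$ with $j$ between the two indices — and that among all the constraints that could become tight, Proposition~\ref{prop:structure-Agn}~(ii) really forbids the pathological simultaneous-tightness configurations. The bookkeeping of "which constraint stops the perturbation first" and translating that back into the shifted Izhboldin--Kurliandchik system is where the care is needed; everything else is either compactness, continuity, or the arithmetic identities already recorded in Remark~\ref{rem:syl-prop}.
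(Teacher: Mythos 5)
Your overall strategy (compactness plus perturbation analysis of which constraints are active at a minimizer) is the right family of ideas, and your inductive reduction in the $\PS(g,1)$-tight branch is genuinely nice: the slice $\{y_1 = 1/s_{g,1}\}\cap A_g^n$ is indeed isomorphic to $A_{g'}^{\,n-1}$ with $g' = t_{g,2}$, and $s_{g',j} = s_{g,j+1}$, so induction would convert a minimizer of the restricted problem into $y(g,n,i_0'+1)$. But the proposal has a genuine gap in the complementary branch. Your claim that ``no $\PS$ can be tight without $\PS(g,1)$ being tight by monotonicity of partial products'' is false: for $n=3$, $g=1$ the point $y = (9/20,\,11/29,\,99/580)$ lies in $A_1^3$, satisfies $\PS(1,2)$ with equality and $\PS(1,1)$ strictly. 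Consequently, in the branch ``$\PS(g,1)$ strict'' you cannot conclude that the only binding equalities in the tail are $\ORD$-equalities; a tight intermediate constraint $\PS(g,j)$, $j\ge 2$, blocks exactly the mass-transfer perturbations you propose (pushing $y_a$ up and $y_b$ down increases the left side and decreases the right side of every $\PS(g,j)$ with $a\le j<b$, and pushing them together increases the left side of every $\PS(g,j)$ with $j\ge b$). Ruling out such configurations at a minimizer is the actual core of the statement, and assuming it is circular. The paper's Lemma \ref{lemma:structure-minimum-fk} does this work: part (iii) is proved for $i\le i_0-2$ by a splitting perturbation, but the boundary case $i=i_0-1$ requires a different device — a one-parameter family $\tilde y(t)$ and a sign analysis of the derivative of $f_k(\tilde y(t))$, showing $t=0$ cannot be a minimum whether the relevant coefficient is positive, negative or zero. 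Nothing in your sketch plays this role.

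Two further gaps: (a) when the two unequal coordinates both lie beyond index $k$, your transfer leaves $f_k$ unchanged, so minimality gives no contradiction; you need the analogue of Lemma \ref{lemma:structure-minimum-fk} (i) (that the constant tail reaches back to an index $\le k$), which the paper proves by decreasing the whole block containing $y_k$ and raising the tail, using Proposition \ref{prop:structure-Agn} (ii) to guarantee the tail's $\PS$-constraints are strict and hence the move is feasible. (b) In the case $b\le k$ with $y_a>y_b$, ``push apart until a constraint stops you'' is not by itself a contradiction at $y$: if the blocking constraints are already tight at $y$ there is no feasible decreasing direction in that family, and one must switch to a different perturbation — again exactly the bookkeeping of Lemma \ref{lemma:structure-minimum-fk}. (Minor: your $y(g,n,0)$ with $t_{g,0}=1$ does not satisfy $\SUM(g)$; the all-equal point is $y(g,n,1)$.) So as it stands the proposal establishes the easy configurations and offers an attractive alternative induction for the $\PS(g,1)$-tight case, but the hard cases that the paper's Lemma \ref{lemma:structure-minimum-fk} is designed for are not handled.
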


The major part of the proof of Proposition \ref{prop:fk-minimum-general} is governed by the following Lemma.

\begin{lemma}\label{lemma:structure-minimum-fk}
Let $n \ge 3$ and $k \in \{1,\dots,n\}$. Let $y = (y_1,\dots,y_n) \in A_g^n$ such that~$f_k$ attains its minimum at $y$. Denote by $i_0$ the minimal index such that $y_{i_0} = y_n$ holds. Then the following hold:
\begin{enumerate}
    \item $i_0 \le k$.
    \item The inequality $\ORD(i)$ is strict for all $1 \le i \le i_0-1$.
    \item The inequality $\PS(g,i)$ holds with equality for all $1 \le i \le i_0-1$.
\end{enumerate}
\end{lemma}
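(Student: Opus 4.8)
The plan is to derive all three claims by exchange (perturbation) arguments around the minimizer $y$: one moves a small amount of mass between two ``levels'' of the tuple, which keeps $\SUM(g)$ intact automatically, and one arranges the move so that the ordering inequalities $\ORD(\cdot)$ and the product inequalities $\PS(g,\cdot)$ stay valid while $f_k$ strictly decreases, contradicting minimality of $y$. Here a \emph{plateau} of $y$ is a maximal block of indices on which $y$ is constant. Three facts are used throughout: by Proposition~\ref{prop:structure-Agn}(i) every coordinate of $y$ is strictly positive and $y_1 < 1/g$; by Proposition~\ref{prop:structure-Agn}(ii), whenever $y_j = y_{j+1}$ the inequality $\PS(g,j)$ is strict, hence survives any sufficiently small perturbation; and, as a special case, the bottom plateau $[i_0,n]$ (on which $y \equiv v := y_n$) satisfies $\PS(g,j)$ strictly for all $i_0 \le j \le n-1$.

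For (i), suppose $i_0 > k$. Then $k < i_0$ gives $y_k > v$, so the maximal plateau $[p,q]$ containing $k$ has $p \le k \le q < i_0$ and is disjoint from $[i_0,n]$. Perturb $y$ by subtracting $t$ from each of $y_p,\dots,y_q$ and adding $(q-p+1)t/(n-i_0+1)$ to each of $y_{i_0},\dots,y_n$. For small $t$, $\ORD(\cdot)$ is preserved at the plateau boundaries thanks to the strict gaps coming from maximality of $[p,q]$ and from $y_{i_0-1} > v$. For $\PS(g,j)$ one checks case by case: if $j < p$ the two modifications cancel inside $x_{j+1}+\dots+x_n$; if $p \le j \le q-1$ or $i_0 \le j \le n-1$ then $j$ sits inside a plateau, where $\PS(g,j)$ is strict; and for $q \le j \le i_0-1$ the product $x_1\cdots x_j$ decreases while $x_{j+1}+\dots+x_n$ increases. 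Since $p \le k \le q$, every one of $y_p,\dots,y_k$ is a factor of $f_k$ and is strictly lowered, so $f_k$ strictly decreases --- a contradiction. Hence $i_0 \le k$.

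Assume now $i_0 \le k$. For (ii): if $\ORD(i)$ were an equality for some $1 \le i \le i_0-1$, the maximal plateau $[p,q]$ through $i$ would satisfy $p < q$, and $q < i_0$ because $y_q = y_i > v$, so $p < q \le i_0-1 \le k-1$; perturbing $y_p \mapsto y_p + t$, $y_q \mapsto y_q - t$ preserves $\ORD(\cdot)$ and all $\PS(g,\cdot)$ (inside $[p,q]$ the latter is strict; outside, either both modified indices lie on the same side of $j$, contributing the factor $(y_p+t)(y_q-t)=y_py_q-t^2<y_py_q$ to $x_1\cdots x_j$, or the changes cancel in $x_{j+1}+\dots+x_n$), while $f_k$ is multiplied by $1-t^2/(y_py_q)<1$ since $q \le k$ --- a contradiction. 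Thus $y_1 > y_2 > \dots > y_{i_0} = \dots = y_n = v$. For (iii): suppose $\PS(g,i)$ is strict for some $1 \le i \le i_0-1$. If $i \le i_0-2$, perturb $y_i \mapsto y_i+t$, $y_{i+1}\mapsto y_{i+1}-t$; the strict gaps from (ii) give $\ORD(\cdot)$, the only constraint put at risk is $\PS(g,i)$ at $j=i$, which has slack by hypothesis, for $j \ne i$ the inequality $\PS(g,j)$ is preserved unconditionally as in (ii), and $f_k$ decreases to first order in $t$ since $y_i > y_{i+1}$ and $i,i+1 \le k$; contradiction. If $i = i_0-1$, set $c := n-i_0+1$, $m := k-i_0+1 \ge 1$, and perturb by adding $c\epsilon$ to $y_{i_0-1}$ and subtracting $\epsilon$ from each of $y_{i_0},\dots,y_n$; then $f_k$ is multiplied by $(1+c\epsilon/y_{i_0-1})(1-\epsilon/v)^m$. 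For $\epsilon>0$ feasibility requires $\PS(g,i_0-1)$ to have slack (the only constraint at risk), which holds; for $\epsilon<0$ the move is feasible with no hypothesis; the remaining constraints are fine in either direction. If the first derivative of $f_k$ at $\epsilon=0$ is nonzero, the appropriate (feasible) sign of $\epsilon$ lowers $f_k$; if it vanishes, a short computation gives second derivative $-m(m+1)v^{-2}f_k<0$, so both signs lower $f_k$. Either way minimality is contradicted, hence $\PS(g,i)$ is an equality for all $1 \le i \le i_0-1$.

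The main obstacle is the $\PS(g,\cdot)$-bookkeeping in the two two-level transfers (for (i) and for $i=i_0-1$ in (iii)): for every index $j$ one must trace how $x_1\cdots x_j$ and $x_{j+1}+\dots+x_n$ respond, and the argument only closes because Proposition~\ref{prop:structure-Agn}(ii) makes $\PS(g,j)$ strict inside every plateau, so those constraints are inactive for small moves. A secondary subtlety, again for $i=i_0-1$, is that the first-order variation of $f_k$ along the transfer need not have a definite sign, forcing the use of the (always negative) second-order term together with the observation that the ``downward'' direction of that transfer is feasible unconditionally.
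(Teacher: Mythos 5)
Your proof is correct and follows essentially the same strategy as the paper's: local exchange perturbations contradicting minimality, with the same case split --- moving mass from the plateau containing $k$ to the bottom plateau for (i), a symmetric $\pm t$ exchange inside a plateau for (ii), and for (iii) a slack-based exchange when $i \le i_0-2$ plus a one-parameter transfer between $y_{i_0-1}$ and the bottom plateau analyzed via the sign of the first derivative (with the degenerate case handled by the local-maximum/second-order observation, which you make explicit). The only differences are cosmetic, e.g.\ treating $i_0 = n$ and $i_0 < n$ uniformly in (i) where the paper separates them.
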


\begin{proof}
The strategy for proving (i)-(iii) is the same in each of the three cases. We will assume that the assertion is false and this will allow us to construct a point~$y' \in A_g^n$ with~$f_k(y') < f_k(y)$, which contradicts the choice of $y$. Thus the assertion must be true.
We prove (i). For $k = n$ there is nothing to prove. Let~$k < n$. Assume that $i_0 > k$ holds. Let $j_0$ maximal with $y_k = y_{j_0}$. By assumption $j_0 < i_0$ holds. The entries of $y$ satisfy
\begin{equation*}
    y_1 \ \ge \ \dots \ \ge \ y_k \ = \ \dots \ = \ y_{j_0} \ > \ y_{j_0 + 1} \ \ge \ \dots \ \ge \ y_{i_0 - 1} \ > \ y_{i_0} \ = \ \dots \ = \ y_n.
\end{equation*}
We first consider the case $i_0 = n$. In that case we have $y_{n-1} > y_n$. Let
\begin{equation*}
    0 \ < \ \epsilon \ < \ \min \left( \frac{y_{j_0+1} - y_{j_0}}{2}, \frac{y_{n-1} - y_{n}}{2(j_0-k+1)} \right),
\end{equation*}
\begin{equation*}
    y' \ = \ (y_1,\dots,y_{k-1},y_k - \epsilon, \dots, y_{j_0} - \epsilon, y_{j_0+1}, \dots, y_{n-1}, y_n + \tilde{\epsilon}),
\end{equation*}
where $\tilde{\epsilon} = (j_0-k+1)\, \epsilon$. We show that $y'$ lies in $A_g^n$. By the choice of $\epsilon$ and $\tilde{\epsilon}$, the equality~$\SUM(g)$ holds for $y'$ and the inequality $\ORD(i)$ holds for $y'$ for all $i$. Moreover, for all $i \le n-1$ we have
\begin{equation*}
    y'_1 \cdots y'_i \ \le \ y_1 \cdots y_i \ \le \ g(y_{i+1} + \dots + y_n) \ \le \ g(y'_{i+1} + \dots + y'_n),
\end{equation*}
thus $\PS(g,i)$ holds as well. This shows that $y'$ lies in $A_g^n$. Evaluating $f_k$ on $y'$ we obtain
\begin{equation*}
    f_k(y') \ = \ y'_1 \cdots y'_k \ = \ y_1 \cdots y_{k-1} \cdot (y_k - \epsilon)  \ < \ f_k(y),
\end{equation*}
which contradicts the choice of $y$. Now assume that $i_0 < n$ holds. Note that since~$\ORD(i)$ holds with equality for all $i \ge i_0$, by Proposition \ref{prop:structure-Agn} (ii) the inequality~$\PS(g,i)$ is strict for $y$. Thus for each $i \ge i_0$ we can find $\delta_i > 0$, such that
\begin{equation*}
    y_1 \cdots y_{i_0-1} \cdot (y_{i_0} + \delta_i) \cdots (y_{i} + \delta_i)
    \ < \ 
    g(y_{i+1} + \dots + y_n) - g(n-i)\delta_i.
\end{equation*}
We denote by $\delta$ the minimum of all the $\delta_i$. Let
\begin{equation*}
    0 \ < \ \epsilon \ < \ \min \left( \frac{y_{j_0+1} - y_{j_0}}{2}, \frac{y_{i_0-1} - y_{i_0}}{2}, \frac{n-i_0+1}{j_0-k+1}\delta \right),
\end{equation*}
\begin{equation*}
    y' \ = \ (y_1,\dots,y_{k-1},y_k - \epsilon, \dots, y_{j_0} - \epsilon, y_{j_0+1}, \dots, y_{i_0-1}, y_{i_0} + \tilde{\epsilon}, \dots, y_n + \tilde{\epsilon}),
\end{equation*}
where $\tilde{\epsilon} = \frac{j_0-k+1}{n-i_0+1}\, \epsilon$. Again, $\epsilon$ and $\tilde{\epsilon}$ are chosen such that $y'$ satisfies equality $\SUM(g)$ and the inequality $\ORD(i)$ for all $i$. We show that $\PS(g,i)$ holds for $y'$. This is clear for $i < i_0$. For $i \ge i_0$ note that $\tilde{\epsilon} < \delta \le \delta_i$ holds. Thus we have
\begin{equation*}
    y'_1 \cdots y'_i \ < \ y_1 \cdots y_{i_0-1} \cdot (y_{i_0} + \delta_i) \cdots (y_i + \delta_i) \ < \ g(y'_{i+1} + \dots + y'_n).
\end{equation*}
This shows that $y'$ belongs to $A_g^n$. Evaluating $f_k$ on $y'$ we again obtain the inequality $f_k(y') < f_k(y)$, which contradicts the choice of $y$. Thus $i_0 \le k$ holds, which proves~(i).

We prove (ii). Note that by definition of $i_0$ we have $y_{i_0-1} < y_{i_0}$, so for~$i_0 \le 2$ there is nothing to show. Assume $i_0 \ge 3$ holds. This also means we have $n \ge k \ge 3$. We show that $\ORD(i)$ is strict for all $i \le i_0-2$. Assume on the contrary that $y_i = y_{i+1}$ holds for some $i \le i_0-2$. Let $j_0 \le i$ maximal with $y_{j_0} = y_i$ and $j_1 \ge i$ minimal with $y_i = y_{j_1}$. We have $1 \le j_0 \le i < j_1 \le i_0-1$. For the entries of $y$ we have:
\begin{equation*}
    y_{j_0-1} \ > \ y_{j_0} \ = \ \dots \ = \ y_{i} \ = \ \dots \ = \ y_{j_1} \ > \ y_{j_1+1} \ \ge \ \dots \ \ge \ y_{i_0-1} \ > \ y_{i_0}.
\end{equation*}
By Proposition \ref{prop:structure-Agn} The inequality $\PS(g,l)$ is strict for all $j_0 \le l < j_1$. There is thus~$\delta > 0$ such that the following inequality holds for all $j_0 \le l < j_1$:
\begin{equation*}
    y_1 \cdots y_{j_0-1} \cdot (y_{j_0} + \delta) \cdot y_{j_0+1} \cdots y_{l} \ < \ g(y_{l+1} + \dots + y_n - \delta)
\end{equation*}
With this value $\delta$ we may choose an $\epsilon > 0$ as follows and define a point $y'$ depending on this $\epsilon$:
\begin{equation*}
    0 \ < \ \epsilon \ < \ \min\left( \frac{y_{j_0-1} - y_{j_0}}{2}, \frac{y_{j_1} - y_{j_1-1}}{2}, \delta \right),
\end{equation*}
\begin{equation*}
    y' \ = \ \left( y_1, \dots, y_{j_0} + \epsilon, \dots, y_{j_1} - \epsilon, \dots y_n \right).
\end{equation*}
We show that $y'$ lies in $A_g^n$. Clearly $y'$ satisfies $\SUM(g)$ and $\ORD(l)$ for all $l$. By the choice of $\epsilon$, the inequality $\PS(g,l)$ holds for $y'$ for $l < j_1$. For $l \ge j_1$ note that
\begin{equation*}
    y'_{j_0} \cdot y'_{j_1} \ = \ (y_{j_0} + \epsilon)(y_{j_1} - \epsilon) \ = \ y_{j_0}y_{j_1} - \epsilon^2 \ < \ y_{j_0}y_{j_1}.
\end{equation*}
Thus in this case $\PS(g,l)$ holds for $y'$ as well. This shows that $y'$ belongs to $A_g^n$. As before, we have $f_k(y') < f_k(y)$, which contradicts the choice of $y$. Thus $\ORD(i)$ is strict, which proves (ii).

We prove (iii). First we assume there is $i \le i_0 - 2$ such that $\PS(g,i)$ is strict. Then there is $\delta > 0$ such that the following inequality holds:
\begin{equation*}
    y_1 \cdots y_{i-1} \cdot (y_i + \delta) \ < \ g( y_{i+1} + \dots + y_n - \delta ).
\end{equation*}
With this value $\delta$ we may choose an $\epsilon > 0$ as follows and define a point $y'$ depending on this $\epsilon$:
\begin{equation*}
    0 \ < \ \epsilon \ < \ \min \left( \frac{y_{i-1} - y_i}{2}, \frac{y_i - y_{i+1}}{2}, \delta \right),
\end{equation*}
\begin{equation*}
    y' \ = \ (y_1, \dots, y_{i-1}, y_i + \epsilon, y_i - \epsilon, y_{i+1}, \dots , y_n).
\end{equation*}
With the same arguments as in (i) and (ii) we see that $y'$ lies in $A_g^n$ and again~$f_k(y') < f_k(y)$ holds, contradicting the choice of $y$. Thus $\PS(g,i)$ holds with equality. Now assume that~$\PS(g,i_0-1)$ is strict. For $t \in \RR$ consider the points
\begin{equation*}
    \tilde{y}(t) \ := \ (y_1,\dots,y_{i_0-2},y_{i_0-1} + t, y_{i_0} - \tilde{t}, \dots , y_{n} - \tilde{t}),
\end{equation*}
where $\tilde{t} = \frac{t}{n-i_0+1}$. Note that $\tilde{y}(0) = y$ holds. We define a function $f\colon \RR \rightarrow \RR$ by
\begin{equation*}
    f(t) \ := \ f_k(\tilde{y}(t)) \ = \ y_1 \cdots y_{i_0-2} (y_{i_0-1}+t) (y_{i_0}-\tilde{t})^{k-i_0+1}.
\end{equation*}
The derivative of $f$ is given by
\begin{equation*}
    f'(t) \ = \ y_1 \cdots y_{i_0-2} (y_{i_0-1}-\tilde{t})^{k-i_0} \left[ \left( y_{i_0} - \frac{k-i_0+1}{n-i_0+1} y_{i_0-1} \right) - (k-i_0+2) \tilde{t} \right].
\end{equation*}
Note that for $t$ close to zero, the factor before the square brackets is positive. The behaviour of $f$ close to $t=0$ is thus governed by the term
\begin{equation*}
    \delta \ = \ y_{i_0} - \frac{k-i_0+1}{n-i_0+1} y_{i_0-1}.
\end{equation*}
If $\delta$ is negative, then $f$ is monotone decreasing in a neighborhood of $t=0$. We can thus find $t > 0$ with $\tilde{y}(t) \in A_g^n$ such that $f(t) < f(0)$ holds. On the other hand, if~$\delta$ is positive, then $f$ is monotone decreasing in a neighborhood of $t=0$ and we can find~$t < 0$ with~$\tilde{y}(t) \in A_g^n$ and $f(t) < f(0)$. If $\delta = 0$, then $f$ has a local maximum at~$t=0$. There is thus a neighborhood of $t=0$ with $\tilde{y}(t) \in A_g^n$ and we have~$f(t) < f(0)$ for all~$t\ne0$ in that neighborhood. In all cases there is a point $\tilde{y}(t) \in A_g^n$ with~$f_k(\tilde{y}(t)) < f_k(y)$. A contradiction to the choice of $y$, thus~$\PS(g,i_0-1)$ holds with equality for $y$.
\end{proof}

\begin{proof}[Proof of Proposition \ref{prop:fk-minimum-general}]
Let $y = (y_1,\dots,y_n) \in A_g^n$ such that $f_k$ attains its minimum at $y$. Let $i_0$ minimal such that $y_{i_0} = y_n$ holds. By Lemma \ref{lemma:structure-minimum-fk} (i) we have~$i_0 \le k$. We show that $y = y(g,n,i_0)$ holds. By Lemma \ref{lemma:structure-minimum-fk} (ii) the inequality~$\PS(g,i)$ holds with equality for all $i=1\dots i_0 -1$. Proposition \ref{prop:structure-Agn} (iii) then tells us that $y_i = 1/s_{g,i}$ holds. Now using $y_{i_0} = \dots = y_n$ and the fact that $\PS(g,i_0-1)$ holds with equality, we obtain
\begin{equation*}
    \frac{g}{t_{g,i_0}} \ = \ \frac{1}{s_{g,1}} \cdots \frac{1}{s_{g,i_0-1}} \ = \ g(y_{i_0} + \dots + y_n) \ = \ g(n-i_0+1)y_{i_0}.
\end{equation*}
This shows that $y = y(g,n,i_0)$ holds, which completes the proof.
\end{proof}

\begin{proposition}\label{prop:fk-minimum}
Let $g \ge 1$, $n \ge 3$ and let $k \in \{1,\dots,n\}$. Let $y \in A_g^n$ such that~$f_k$ attains its minimum at $y$.
\begin{enumerate}
    \item
    Assume $k = n$. Then we have $f_n(y) = g/t_{g,n}^2$ and $y = y(g,n,n)$ holds.
    
    \item
    Assume $k = n-1$. If $(n,g) = (3,1)$, then we have $f_2(y) = 1/9$ and~$y$ is the point $y(1,3,1)$. In all other cases we have
    \begin{equation*}
        f_{n-1}(y) \ = \ \frac{g}{2t_{g,n-1}^2}
    \end{equation*}
    and the point $y$ is one of the following:
    \begin{equation*}
        y(2,3,1),
        \qquad
        y(1,4,2),
        \qquad
        y(g,n,n-1).
    \end{equation*}
    
    \item
    Assume $k = n-2$. If $n = 4$ and $g \in \{1,2\}$, then we have $f_{2}(y) = 1/(16g^2)$ and $y = y(g,4,1)$ holds. If $(n,g) = (5,1)$, then we have $f_{3}(y) = 1/128$ and~$y = y(1,5,2)$ holds.
    In all other cases we have
    \begin{equation*}
        f_{n-2}(y) \ = \ \frac{g}{3t_{g,n-2}^2}
    \end{equation*}
    and either $y = y(3,4,1)$ or $y = y(g,n,n-2)$.
\end{enumerate}
\end{proposition}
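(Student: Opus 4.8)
The plan is to reduce the problem to a finite optimization by invoking the already-proved Proposition \ref{prop:fk-minimum-general}: it tells us that $f_k$ attains its minimum on $A_g^n$ at one of the finitely many points $y(g,n,i_0)$ with $1 \le i_0 \le k$, so the minimal value equals $\min_{1 \le i_0 \le k} f_k(y(g,n,i_0))$ and every minimizer occurs among these points. The first step is to evaluate $f_k$ on them. Reading off the coordinates of $y(g,n,i_0)$ and applying Remark \ref{rem:syl-prop} (ii) in the form $\prod_{j=1}^{i_0-1}s_{g,j}^{-1} = g/t_{g,i_0}$ (which is also correct for the empty product, $i_0=1$, since $t_{g,1}=g$) gives the closed formula
\begin{equation*}
    f_k(y(g,n,i_0)) \ = \ \frac{g}{c(i_0)}, \qquad c(i_0) \ := \ (n-i_0+1)^{k-i_0+1}\, t_{g,i_0}^{k-i_0+2},
\end{equation*}
so minimizing $f_k$ is the same as maximizing $c$. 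Note that $c(k) = (n-k+1)\,t_{g,k}^2$, which is exactly the claimed optimum: $t_{g,n}^2/g$, $2t_{g,n-1}^2/g$, resp.\ $3t_{g,n-2}^2/g$ when $k = n,\,n-1,\,n-2$. It remains to decide for which $i_0$ the maximum of $c$ is attained.

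The heart of the matter is the comparison of consecutive values of $c$. Using the recursion in the form $t_{g,i_0+1} = s_{g,i_0}t_{g,i_0}$, one finds, with $m := n - i_0$, $p := k - i_0 + 1$ and $s := s_{g,i_0}$,
\begin{equation*}
    \frac{c(i_0)}{c(i_0+1)} \ = \ \frac{(m+1)^p}{m^{p-1}}\cdot\frac{s-1}{s^p},
\end{equation*}
so $c$ strictly increases from step $i_0$ to $i_0+1$ precisely when $s^p/(s-1) > (m+1)^p/m^{p-1}$. Since $s \mapsto s^p/(s-1)$ is nondecreasing on $[2,\infty)$ for $p \ge 2$, and since $s_{g,i_0} = g+1 \ge 2$ for $i_0 = 1$ while $s_{g,i_0} \ge s_{g,2} \ge 3$ for $i_0 \ge 2$ (indeed $s_{g,i_0}\ge 7$ for $i_0 \ge 3$, directly from the recursion), the inequality is hardest to satisfy when $s$ is as small as possible. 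In the three cases one has $p = m+1$, $p = m$, resp.\ $p = m-1$, and $m$ ranges over $m \ge 1$, $m \ge 2$, resp.\ $m \ge 3$; inserting the smallest admissible $s$ and checking the resulting elementary inequalities — all of the shape $2^p m^{p-1} > (m+1)^p$, e.g.\ $2^{m+1}m^m > (m+1)^{m+1}$ for $m \ge 2$ and $2^{m-1}m^{m-2} > (m+1)^{m-1}$ for $m \ge 4$, the latter holding by the narrow margin $128 > 125$ at $m = 4$ — shows that $c$ strictly increases at every step except possibly the last one, where $m$ takes its least value $1$, $2$, resp.\ $3$ and the sign of $c(i_0) - c(i_0+1)$ is governed by a quadratic in $s$: namely $(s-2)^2$, $2s^2-9s+9 = (2s-3)(s-3)$, resp.\ $3s^2-16s+16 = (3s-4)(s-4)$. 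Hence $c$ is maximized uniquely at $i_0 = k$ unless this last quadratic is nonpositive, which happens only for $s_{g,n-1} = 2$, $s_{g,n-2} \in \{2,3\}$, resp.\ $s_{g,n-3} \in \{2,3,4\}$; as the $s_{g,i}$ grow very fast, this forces $n$, and with it $g$, to be small.

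Finally I would settle these finitely many degenerate cases by directly comparing $c(1),\dots,c(k)$. For $k = n$ the quadratic $(s-2)^2$ never vanishes, since $n \ge 3$ forces $s_{g,n-1} \ge 3$; so $y(g,n,n)$ is always the unique minimizer and $f_n(y) = g/t_{g,n}^2$. For $k = n-1$ the degenerate pairs are $(n,g) = (3,1)$, where $c(1) > c(2)$ yields the single minimizer $y(1,3,1)$ with $f_2(y) = 1/9$, and $(n,g) \in \{(3,2),(4,1)\}$, where the last step is a tie so that $y(2,3,1)$, resp.\ $y(1,4,2)$, joins $y(g,n,n-1)$ as a minimizer, all three scenarios giving value $g/(2t_{g,n-1}^2)$ — which matches the statement. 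For $k = n-2$ the degenerate pairs are $(n,g) \in \{(4,1),(4,2)\}$, where $c(1) > c(2)$ gives the minimizer $y(g,4,1)$ with $f_2(y) = 1/(16g^2)$; $(n,g) = (5,1)$, where $c(1) < c(2)$ and $c(2) > c(3)$ give $y(1,5,2)$ with $f_3(y) = 1/128$; and $(n,g) = (4,3)$, where a tie adds $y(3,4,1)$ alongside $y(g,n,n-2)$, both at value $g/(3t_{g,n-2}^2)$. I expect the main obstacle to be this bookkeeping: one must confirm that in each degenerate case no further candidate ties for the extremum, so that the minimizer lists are exhaustive, and that the generic monotonicity estimates really do cover every remaining $(n,g,i_0)$ — a few of the borderline inequalities hold only by a whisker and must be verified numerically rather than estimated away crudely.
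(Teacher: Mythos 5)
Your proposal is correct, and while it shares the paper's overall skeleton — invoke Proposition \ref{prop:fk-minimum-general} to reduce to the finitely many candidates $y(g,n,l)$ with $l \le k$, and compute $f_k(y(g,n,l)) = g/\bigl((n-l+1)^{k-l+1}t_{g,l}^{k-l+2}\bigr)$ exactly as the paper does — the comparison step is done by a genuinely different mechanism. The paper proves the global inequalities of Lemma \ref{lemma:syl-ineq-technical} (comparing each candidate value directly against the value at $l=k$) by a double induction on $(r,n)$ with a list of base cases checked by hand, and the exceptional pairs $(n,g)$ are excluded up front and treated numerically. You instead compare \emph{consecutive} values: your ratio $c(i_0)/c(i_0+1) = \frac{(m+1)^p}{m^{p-1}}\cdot\frac{s-1}{s^p}$ is correct (using $t_{g,i_0+1} = s_{g,i_0}t_{g,i_0}$), the monotonicity of $s \mapsto s^p/(s-1)$ on $[2,\infty)$ for $p\ge 2$ legitimately reduces all non-final steps (which have $p\ge 3$) to the single-variable inequalities $2^p m^{p-1} > (m+1)^p$ in the stated ranges $m\ge 2$, $m\ge 3$, $m\ge 4$ (these do hold, the tightest being $128>125$ at $m=4$), and the final step (where $p=2$) is governed by the quadratics $(s-2)^2$, $(2s-3)(s-3)$, $(3s-4)(s-4)$, whose integer roots and negativity ranges recover exactly the exceptional cases $(3,1),(3,2),(4,1)$ for $k=n-1$ and $(4,1),(4,2),(5,1),(4,3)$ for $k=n-2$, with the correct minimizers and minimal values in each (including the $(5,1)$ case, where $c(1)<c(2)>c(3)$ gives the unique minimizer $y(1,5,2)$). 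What the paper's route buys is a reusable standalone inequality (Lemma \ref{lemma:syl-ineq-technical}); what yours buys is transparency — the degenerate cases appear as roots of explicit quadratics rather than as leftovers of an induction — at the cost of the telescoping bookkeeping you flag, which, as checked above, closes without any gap.
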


For the proof of Proposition \ref{prop:fk-minimum} we need the following Lemma.

\begin{lemma}\label{lemma:syl-ineq-technical}
Let $g \ge 1$ and $n \ge 3$. Then the following hold:
\begin{enumerate}
    \item
    For all $1 \le r \le n$ we have
    \begin{equation*}
        r^r t_{g,n-r+1}^{r+1} \ \le \ t_{g,n}^2.
    \end{equation*}
    Equality holds if and only if $r = 1$.
    \item
    Assume $(n,g) \ne (3,1)$. Then for all $1 \le r \le n-1$ we have
    \begin{equation*}
        (r+1)^r t_{g,n-r}^{r+1} \ \le \ 2 t_{g,n-1}^2.
    \end{equation*}
    Equality holds if and only if $r = 1$ or $(g,r,n)$ equals $(1,2,4)$ or $(2,2,3)$.
    \item
    Assume $(n,g) \not\in \{(4,1), (4,2), (5,1)\}$. Then for all $1 \le r \le n-2$ we have
    \begin{equation*}
        (r+2)^r t_{g,n-r-1}^{r+1} \ \le \ 3 t_{g,n-2}^2.
    \end{equation*}
    Equality holds if and only if $r = 1$ or $(g,r,n) = (3,2,4)$.
\end{enumerate}
\end{lemma}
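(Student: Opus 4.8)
The plan is to prove the three inequalities at once. Put $c = 0, 1, 2$ and $m := n - c$ in cases (i), (ii), (iii) respectively; since $n - r + 1 - c = m - r + 1$, each of the three claims is equivalent to
\begin{equation*}
    R_r \ := \ \frac{t_{g,m}^{2}}{t_{g,m-r+1}^{\,r+1}} \ \ge \ \frac{(r+c)^{r}}{c+1},
\end{equation*}
to be shown for all integers $1 \le r \le m$, where in addition $m \ge 3 - c$ and where the exceptional pairs $(n,g)$ excluded in (ii), (iii) become, respectively, $(m,g) \ne (2,1)$ and $(m,g) \notin \{(2,1),(2,2),(3,1)\}$. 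From the defining recursion one has $t_{g,k+1} = s_{g,k} t_{g,k} = t_{g,k}(t_{g,k}+1)$, so $t_{g,\cdot}$ is strictly increasing with $t_{g,1} = g$, $t_{g,2} = g(g+1) \ge 2$, $t_{g,3} = g(g+1)(g^{2}+g+1) \ge 6$, and the same recursion telescopes to
\begin{equation*}
    R_r \ = \ \prod_{k=1}^{r-1} \frac{\bigl(t_{g,m-k}+1\bigr)^{k+1}}{t_{g,m-k}} .
\end{equation*}

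The tool for the bulk of the cases is the elementary fact that $x \mapsto (x+1)^{\ell}/x$ is strictly increasing on $[1,\infty)$ for every integer $\ell \ge 2$, so $(x+1)^{\ell}/x \ge 2^{\ell}$ there, with equality only at $x = 1$. Bounding every factor with $k \ge 2$ by $2^{k+1}$ and keeping the $k = 1$ factor exact gives, for $r \ge 2$,
\begin{equation*}
    R_r \ \ge \ \Bigl( t_{g,m-1} + 2 + \tfrac{1}{t_{g,m-1}} \Bigr) \cdot 2^{\frac{r(r+1)}{2}-3}.
\end{equation*}
For $r \ge 3$ one has $m \ge r \ge 3$, hence $t_{g,m-1} \ge t_{g,2} \ge 2$ and the first factor exceeds $4$; combined with the standard estimate $2^{s} \ge s^{2}$ for integers $s \ge 4$ (which gives $2^{r(r+1)/2} \ge (r+1)^{r}$ for $r \ge 3$, and $3 \cdot 2^{r(r+1)/2-1} \ge (r+2)^{r}$ for $r \ge 4$), this yields $R_r > (r+c)^{r}/(c+1)$ for all $r \ge 3$ in cases (i), (ii) and for all $r \ge 4$ in case (iii). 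In particular there is no equality case with $r \ge 3$ except possibly $r = 3$ in (iii).

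It remains to deal with $r = 1$, $r = 2$ and the single case $r = 3$ of part (iii). For $r = 1$ the product is empty, so $R_1 = 1 = (1+c)/(c+1)$: equality always. For $r = 2$ one has the exact value $R_2 = t_{g,m-1} + 2 + 1/t_{g,m-1}$, which is $\ge (c+2)^{2}/(c+1)$ precisely when $t_{g,m-1} \ge c+1$; the pairs excluded in (ii), (iii) are exactly the pairs with $t_{g,m-1} < c+1$, so the bound holds, with equality iff $t_{g,m-1} = c+1$. From the short table $t_{g,1} = g$, $t_{g,2} = g(g+1)$, $t_{g,3} \ge 6$, the solutions of $t_{g,m-1} = c+1$ are exactly $(g,r,n) = (2,2,3),(1,2,4)$ in (ii) and $(g,r,n) = (3,2,4)$ in (iii), while in (i) one has $m = n \ge 3$, forcing $t_{g,m-1} \ge 2 > 1$ and a strict inequality; this matches the stated equality lists. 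Finally, for $r = 3$ in part (iii) the exclusion leaves $m \ge 3$ with $(m,g) \ne (3,1)$, so $t_{g,m-1} \ge 6$ and $t_{g,m-2} \ge 2$, whence
\begin{equation*}
    R_3 \ = \ \Bigl( t_{g,m-1} + 2 + \tfrac{1}{t_{g,m-1}} \Bigr)\frac{(t_{g,m-2}+1)^{3}}{t_{g,m-2}} \ \ge \ \frac{49}{6}\cdot\frac{27}{2} \ > \ \frac{5^{3}}{3},
\end{equation*}
a strict inequality, completing the proof.

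The main obstacle is bookkeeping rather than a hard idea: one must confirm that the crude power-of-two bound really covers every triple $(r,n,g)$ with $r \ge 3$ apart from the one treated by hand, that the pairs excluded in (ii), (iii) are exactly those where the stated bound fails, and that the equality analysis of the $r \le 2$ factors produces precisely the three exceptional triples and no others. Extra care is warranted at the boundary $r = m$, where $t_{g,m-r+1} = t_{g,1} = g$ can be as small as $1$, to ensure no further equality case appears.
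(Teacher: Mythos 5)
Your proof is correct, and it takes a genuinely different route from the paper's. The paper proves (i)--(iii) by a simultaneous induction on $(r,n)$, checking a handful of base cases by direct computation and pushing the induction step through with auxiliary estimates such as $(r+1)^2/r \le s_{g,n-1}$; you instead phrase all three statements uniformly as $R_r := t_{g,m}^2/t_{g,m-r+1}^{r+1} \ge (r+c)^r/(c+1)$ with $m=n-c$, telescope $R_r$ into the product $\prod_{k=1}^{r-1}(t_{g,m-k}+1)^{k+1}/t_{g,m-k}$ using $t_{g,k+1}=t_{g,k}(t_{g,k}+1)$, and then read everything off from the exact $k=1$ factor together with the crude bound $2^{k+1}$ on the remaining factors. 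Your route is non-inductive and makes the exceptional triples appear for a transparent structural reason, namely that $x+2+1/x$ equals $(c+2)^2/(c+1)$ exactly at $x=c+1$, so the equality cases and the excluded pairs $(n,g)$ drop out of the single equation $t_{g,m-1}=c+1$; the paper's induction avoids any growth estimates of the type $2^s\ge s^2$ but pays for it with more case-by-case base computations. One small caveat: your parenthetical claim that $2^s\ge s^2$ for $s\ge 4$ ``gives'' $3\cdot 2^{r(r+1)/2-1}\ge (r+2)^r$ for $r\ge 4$ is not a one-step consequence at $r=4$ (there the bound $(r+2)^2\le 2^{r+2}$ is too lossy); the inequality is nevertheless true --- check $r=4$ directly ($1536\ge 1296$) and use $2^{r+1}\ge(r+2)^2$ for $r\ge 5$ --- so this is a cosmetic, not a mathematical, gap.
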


\begin{proof}
We prove the assertions (i)-(iii) by induction on $r$ and $n$. Note that for $r = 1$ the inequalities in (i) - (iii) even hold with equality for any $n \ge 3$. We may thus assume~$r \ge 2$. Moreover, we will use the following, which can be verified by direct computation:
\begin{enumerate}
    \item[(a)] $(r+1)/r \le s_{g,n}$ holds for all values of $g,n$ and $r$.
    \item[(b)] If $n \ge 3$, then $(r+1)^2/r \le s_{g,n}$ holds for all $g$ and all $1 \le r \le n$.
    \item[(c)] If $n \ge 4$, or $n \ge 3$ and $g \ge 2$, then $(r+1)^2/r \le s_{g,n-1}$ holds for all $1 \le r \le n$.
    \item[(d)] If $n \ge 6$, or $n \ge 4$ and $g \ge 2$, then $(r+1)^2/r \le s_{g,n-2}$ holds for all $1 \le r \le n$.
\end{enumerate}
We prove (i). The cases $(r,n) = (2,3)$ and $(r,n) = (3,3)$ are verified by direct computation. In these two cases, the inequality is strict. Assume the assertion is true for a fixed pair~$(r,n)$. Then we have:
\begin{align*}
    (r+1)^{r+1} t_{g,(n+1)-(r+1)+1}^{(r+1)+1} \ &= \ r^r t_{g,n-r+1}^{r+1} (r+1)\left(\frac{r+1}{r}\right)^r t_{g,n-r+1} \\
    &\le \ t_{g,n}^2 (r+1)\left(\frac{r+1}{r}\right)^r t_{g,n-r+1} \\
    &\le \ t_{g,n}^2 s_{g,n} s_{g,n-1} \cdots s_{g,n-r+1} t_{g,n-r+1} \\
    &= \ t_{g,n}^2 t_{g,n+1} \\
    &< \ t_{g,n+1}^2.
\end{align*}
In the second step we used the induction hypothesis for the pair $(r,n)$ and in the third step we used (a) and (b). Thus the inequality (i) holds for the pair $(r+1,n+1)$ and it is strict in this case.

We prove (ii). The cases $(g,r,n) = (1,2,4)$ and $(g,r,n) = (1,3,4)$ as well as the cases~$(g,r,n) = (g,2,3)$ for all $g \ge 2$ are verified by direct computation. Here (ii) holds with equality for $(g,r,n) = (1,2,4)$ and for $(g,r,n) = (2,2,3)$ and is strict otherwise. Assume the assertion is true for a fixed pair $(r,n)$. Then we have:
\begin{align*}
    ((r+1)+1)^{r+1} t_{g,n-r}^{(r+1)+1} \ &= \ (r+1)^r t_{g,n-r}^{r+1} (r+2)\left(\frac{r+2}{r+1}\right)^r t_{g,(n+1)-(r+1)} \\
    &\le \ 2t_{g,n-1}^2 \frac{(r+2)^2}{r+1}\left(\frac{r+2}{r+1}\right)^{r-1} t_{g,n-r} \\
    &\le \ 2t_{g,n-1}^2 s_{g,n-1} s_{g,n-2} \cdots s_{g,n-r} t_{g,n-r} \\
    &= \ 2t_{g,n-1}^2 t_{g,n} \\
    &< \ 2t_{g,n}^2 \\
    &= \ 2t_{g,(n-1)+1}^2.
\end{align*}
In the second step we used the induction hypothesis for the pair $(r,n)$ and in the third step we used (a) and (c). Thus the inequality (ii) holds for the pair $(r+1,n+1)$ and it is strict in this case.

We prove (iii). For $n = 3$ there is nothing to prove. The cases $(g,r,n) = (g,2,4)$ for~$g \ge 3$, as well as $(g,r,n) = (2,r,5)$ and $(g,r,n) = (1,r,6)$ for $2 \le r \le n-2$ are verified by direct computation. Here (iii) holds with equality for $(g,r,n) = (3,2,4)$ and is strict otherwise. Assume the assertion is true for a fixed pair $(r,n)$. We may assume that $g \ge 2$ and $n \ge 4$, or $g = 1$ and $n \ge 6$. Then we have:
\begin{align*}
    ((r+1)+2)^{r+1} t_{g,(n+1)-(r+1)-1}^{(r+1)+1} \ &= \ (r+2)^r t_{g,n-r-1}^{r+1} (r+3)\left(\frac{r+3}{r+2}\right)^r t_{g,n-r-1} \\
    &\le \ 3t_{g,n-2}^2 \frac{(r+3)^2}{r+2}\left(\frac{r+3}{r+2}\right)^{r-1} t_{g,n-r-1} \\
    &\le \ 3t_{g,n-2}^2 s_{g,n-2} s_{g,n-3} \cdots s_{g,n-r-1} t_{g,n-r-1} \\
    &= \ 3t_{g,n-2}^2 t_{g,n-1} \\
    &< \ 3t_{g,n-1}^2 \\
    &= \ 3t_{g,(n+1)-2}^2.
\end{align*}
In the second step we used the induction hypothesis for the pair $(r,n)$ and in the third step we used (a) and (d), thus (iii) holds for the pair~$(r+1,n+1)$ and it is strict in this case.
\end{proof}

\begin{proof}[Proof of Proposition \ref{prop:fk-minimum}]
We compare the values of the function $f_k$ on the points~$y(g,n,k)$ and $y(g,n,l)$ for $1 \le l \le k$. On $y(g,n,l)$, the value of $f_k$ is given by
\begin{equation*}
    f_k(y(g,n,l)) \ = \ \frac{g}{(n-l+1)^{k-l+1} t_{g,l}^{k-l+2}}.
\end{equation*}
We prove (i). Let $1 \le l \le n$ and set $r := n-l+1$. Then we have
\begin{equation*}
    \frac{f_n(y(g,n,l))}{f_n(y(g,n,n))} \ = \ \frac{f_n(y(g,n,n-r+1))}{f_n(y(g,n,n))} \ = \ \frac{t_{g,n}^2}{r^r t_{g,n-r+1}^{r+1}}.
\end{equation*}
By Lemma \ref{lemma:syl-ineq-technical} (i) this ratio is at least one for all $1 \le r \le n-1$ and equality holds if and only if $r = 1$, ie. if and only if $l = n$. This proves (i).

We prove (ii). Let $k = n-1$. Let $1 \le l \le n-1$ and set $r := n-l$. Then we have
\begin{equation*}
    \frac{f_{n-1}(y(g,n,l))}{f_{n-1}(y(g,n,n-1))} \ = \ \frac{f_{n-1}(y(g,n,n-r))}{f_{n-1}(y(g,n,n-1))} \ = \ \frac{2t_{g,n-1}^2}{(r+1)^r t_{g,n-r}^{r+1}}.
\end{equation*}
Assume $(n,g) \ne (3,1)$. Then this ratio is at least one for all $1 \le r \le n-1$ by Lemma~\ref{lemma:syl-ineq-technical}~(ii). Moreover it is equal to one if and only if $r = 1$ or $(g,r,n) = (1,2,4)$ or~$(g,r,n) = (2,2,3)$. This means that $f_{n-1}$ attains its minimum on $y(g,n,l)$ if and only if $(g,l,n) = (1,2,4)$ or $(g,l,n) = (2,1,3)$ or $l = n-1$. In the case $(n,g) = (3,1)$ we have
\begin{equation*}
    f_2(y(1,3,1)) \ > \ f_2(y(1,3,2)).
\end{equation*}
Thus in this case $f_{2}$ attains its minimum at $y = y(1,3,1)$ and we have $f_{2}(y) = 1/9$.

We prove (iii). Let $k = n-2$. Let $1 \le l \le n-2$ and set $r := n-l-1$. Then we have
\begin{equation*}
    \frac{f_{n-2}(y(g,n,l))}{f_{n-2}(y(g,n,n-2))} \ = \ \frac{f_{n-2}(y(g,n,n-r-1))}{f_{n-2}(y(g,n,n-2))} \ = \ \frac{3t_{g,n-2}^2}{(r+2)^r t_{g,n-r-1}^{r+1}}.
\end{equation*}
Assume $(n,g) \not\in \{(4,1), (4,2), (5,1)\}$. Then by Lemma \ref{lemma:syl-ineq-technical} (iii) this ratio is at least one for all $1 \le r \le n-2$ and it is equal to one if and only if $r = 1$ or~$(g,r,n)$ equals $(3,2,4)$, ie. if and only if $l = n-2$ or $(g,l,n) = (3,1,4)$ holds. For the three cases that were excluded, plugging in the actual values, we obtain
\begin{align*}
    f_2(y(1,4,1)) \ &< \ f_2(y(1,4,2)), \\
    f_2(y(2,4,1)) \ &< \ f_2(y(2,4,2)), \\
    f_3(y(1,5,2)) \ &< \ f_3(y(1,5,1)) \ < \ f_3(y(1,5,3)).
\end{align*}
This completes the proof of Proposition \ref{prop:fk-minimum}.
\end{proof}

\begin{proof}[Proof of Theorem \ref{thm:ufp-bounds}]
Let $A = (\alpha_1,\dots,\alpha_n)$ a unit fraction partition of $g$ and assume~$\alpha_1 \le \dots \le \alpha_n$ holds. Let $y(A) := (1/\alpha_1,\dots,1/\alpha_n).$ This point belongs to $A_g^n$ by Lemma \ref{lemma:ufp_in_Agn}. For $1 \le k \le n$ we have
\begin{equation*}
    F_k(A) \ = \ \frac{\alpha_1 \cdots \alpha_n}{\lcm(\alpha_1, \dots, \alpha_n)^{n-k}} \ \le \ \alpha_1 \cdots \alpha_k \ = \ f_k^{-1}(y(A)).
\end{equation*}
We prove (i). Using Proposition \ref{prop:fk-minimum} (i) for the point $y(A)$ we obtain
\begin{equation*}
    F_n(A) \ \le \ f_n^{-1}(y(A)) \ \le \ \frac{t_{g,n}^2}{g}.
\end{equation*}
Equality holds if and only if $y(A) = y(g,n,n)$, ie. if and only if $A$ is the unit fraction partition $(s_{g,1},\dots,s_{g,n-1},t_{g,n})$.
We prove (ii). We use Proposition \ref{prop:fk-minimum} (ii) for the point $y(A)$. If $(n,g) = (3,1)$ holds, then we have
\begin{equation*}
    F_2(A) \ \le \ f_2^{-1}(y(A)) \ \le \ 9
\end{equation*}
and equality holds if and only if $y(A) = y(1,3,1)$, ie. if and only if $A = (3,3,3)$. If~$(n,g) \ne (3,1)$, then we have
\begin{equation*}
    F_{n-1}(A) \ \le \ f_{n-1}^{-1}(y(A)) \ \le \ \frac{2t_{g,{n-1}}^2}{g}.
\end{equation*}
Checking the points where $f_{n-1}$ attains its minimium, we see that equality holds if and only if $A = (6,6,6)$ or $A = (2,6,6,6)$ or $A = (s_{g,1},\dots,s_{g,n-2},2t_{g,n-1},2t_{g,n-1})$. We prove (iii). We use Proposition \ref{prop:fk-minimum} (iii) for the point $y(A)$ and distinguish three cases:
\begin{enumerate}
    \item[(a)]
    If $n = 4$ and $g \in \{1,2\}$, then we have
    \begin{equation*}
        F_2(A) \ \le \ f_{2}^{-1}(y(A)) \ \le \ 16g^2
    \end{equation*}
    and equality holds if and only if $y(A) = y(g,4,1)$, ie. $A = (4g,4g,4g,4g)$.
    
    \item[(b)] If $(n,g) = (5,1)$, then we have
    \begin{equation*}
        F_3(A) \ \le \ f_{3}^{-1}(y(A)) \ \le \ 128
    \end{equation*}
    and equality holds if and only if $y(A) = y(1,5,2)$, ie. $A = (2,8,8,8,8)$.
    
    \item[(c)]
    If $(n,g) \not\in \{ (4,1), (4,2), (5,1) \}$, then
    \begin{equation*}
        F_{n-2}(A) \ \le \ f_{n-2}^{-1}(y(A)) \ \le \ \frac{3t_{g,n-2}^2}{g}.
    \end{equation*}
    Equality holds if and only if $y(A) = y(3,4,1)$ or $y(A) = y(g,n,n-2)$, ie. if and only if $A$ is one of
    \begin{equation*}
    \qquad
        (12,12,12,12),
        \qquad
        (s_{g,1},\dots,s_{g,n-3},3t_{g,n-2},3t_{g,n-2},3t_{g,n-2}).
    \end{equation*}
\end{enumerate}
\end{proof}

\goodbreak

\section{Proofs of Theorems \ref{thm:max-vol} - \ref{thm:max-mahler}}\label{sect:proofs}

We prove Theorems \ref{thm:max-vol} - \ref{thm:max-mahler}. We start with three Lemmas that we need for the proof of Theorem \ref{thm:max-vol}.

\begin{lemma}\label{lemma:lambdastar-denom}
For any $d$-dimensional Fano simplex $\Delta$ of Gorenstein index $g$ the product~$g^{d-1} \lambda(\Delta^*)$ is an integer.
\end{lemma}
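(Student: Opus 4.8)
Write $A(\Delta) = (\alpha_0,\dots,\alpha_d)$ and $t := \lcm(\alpha_0,\dots,\alpha_d)$. Recall from the proof of Proposition~\ref{prop:uf-vol} that $\Vol(\Delta) = |Q_\Delta|$ and $\lambda(\Delta) = g|Q_\Delta|/t$, so $g|Q_\Delta| = \lambda(\Delta)\,t$. Substituting $\Vol(\Delta) = |Q_\Delta|$ into Proposition~\ref{prop:uf-vol}(ii) and solving for $\lambda(\Delta^*)$ gives
\begin{equation*}
    g^{d-1}\lambda(\Delta^*) \ = \ \frac{\alpha_0\cdots\alpha_d}{\lambda(\Delta)\,t^2},
\end{equation*}
so the assertion is equivalent to the divisibility $\lambda(\Delta)\,t^2 \mid \alpha_0\cdots\alpha_d$. (Equivalently, since a Fano $\Delta$ has $g_\QQ(\Delta)=1$ and hence $g_\QQ(\Delta^*)=g$, one has $\lambda(\Delta^*) = [M : M(g\Delta^*)]/g^d$, and the claim becomes $g \mid [M : M(g\Delta^*)]$, the multiplicity of the lattice simplex $g\Delta^*$.) As $\Delta$ is Fano, $Q_\Delta^\red$ is well-formed (Remark~\ref{rem:ws-props}(ii)), so by Proposition~\ref{prop:ws-ufp} the unit fraction partition $A(\Delta)$ is well-formed; in particular $\alpha_i \mid \lcm(\alpha_j;\,j\ne i)$, so for each prime $p$ the maximum of the $v_p(\alpha_i)$ is attained at least twice, whence $\alpha_0\cdots\alpha_d/t^2 \in \ZZ$. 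This already settles the lemma when $\lambda(\Delta)=1$ (e.g. when $Z(\Delta)$ is an honest weighted projective space).

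\textbf{The remaining content.} Writing $\alpha_i = \lambda(\Delta)\,t/q_i$ and $Q_\Delta^\red = (q_0',\dots,q_d')$ (so $q_i = \lambda(\Delta)q_i'$, $\alpha_i = t/q_i'$), the lemma reduces to $\lambda(\Delta)\cdot q_0'\cdots q_d' \mid t^{\,d-1}$, i.e. prime by prime to
\begin{equation*}
    v_p(\lambda(\Delta)) \ \le \ \textstyle\sum_i v_p(\alpha_i) \ - \ 2\max_i v_p(\alpha_i).
\end{equation*}
Here I would bring in the Fano structure via a Smith normal form $P = U\,[\,\diag(d_1,\dots,d_d)\mid 0\,]\,V$ of the $d\times(d+1)$ vertex matrix $P$, with $U\in\GL(d,\ZZ)$, $V\in\GL(d+1,\ZZ)$, $d_1\mid\dots\mid d_d$ and $d_1\cdots d_d = \lambda(\Delta)$. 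A short computation shows $q_j' = |\det W^{(j)}|$, where $W$ is the matrix of the first $d$ rows of $V$ and $W^{(j)}$ is $W$ with column $j$ deleted; moreover $q' = (q_0',\dots,q_d')$ spans $\ker W$. The Fano hypothesis then enters twice: primitivity of every column $v_j$ of $P$ forces $\gcd_i(d_i V_{ij}) = 1$ for all $j$, which controls how large a $p$-power can sit in the invariant factors $d_i$ relative to the rows of $V$ mod $p$; and $0\in\operatorname{int}\Delta$ forces the signed maximal minors $(-1)^j\det W^{(j)}$ to have a common sign, i.e. $\ker W$ is spanned by a positive vector. Combining these with the integrality of $A(\Delta) = (t/q_j')_j$ — which gives $v_p(q_j') \le v_p(t) = v_p(g) + v_p(|Q_\Delta^\red|)$ for all $j$, together with $v_p(q_j')=0$ for at least two indices $j$ (well-formedness) — should yield $v_p(\lambda(\Delta)) + \sum_j v_p(q_j') \le (d-1)v_p(t)$, which is exactly the inequality needed.

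\textbf{Main obstacle.} The hard step is making the structural input quantitative: one must show that a high power of $p$ dividing the multiplicity $\lambda(\Delta)$ is always offset by $p$ dividing either $g$ or enough of the $\alpha_i$. The examples suggest the offset is always exactly tight — e.g. the extremal reflexive triangle has $\lambda(\Delta)=3$ offset by $A(\Delta)=(3,3,3)$, and the fake $\PP^3$ with $\lambda(\Delta)=g=3$ has $A(\Delta)=(12,12,12,12)$ — so I would expect the final $p$-adic bookkeeping to be sharp rather than wasteful, with no slack to spare. Everything else (the reduction, the $\lambda(\Delta)=1$ case, and the formulas tying $t$, $g$, $\lambda(\Delta)$ and the $q_i'$ together) is routine once this structural estimate is in place.
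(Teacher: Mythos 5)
Your reduction is correct as far as it goes: using $\Vol(\Delta)=|Q_\Delta|$ and $\lambda(\Delta)=g|Q_\Delta|/t$ together with Proposition~\ref{prop:uf-vol} one indeed gets $g^{d-1}\lambda(\Delta^*)=\alpha_0\cdots\alpha_d/(\lambda(\Delta)t^2)$, and the well-formedness of $A(\Delta)$ (via Remark~\ref{rem:ws-props}(ii) and Proposition~\ref{prop:ws-ufp}) does give $t^2\mid\alpha_0\cdots\alpha_d$, settling the case $\lambda(\Delta)=1$. But the lemma's actual content is exactly the divisibility you isolate in ``the remaining content,'' namely $\lambda(\Delta)\,q_0'\cdots q_d'\mid t^{\,d-1}$, equivalently the prime-by-prime inequality $v_p(\lambda(\Delta))\le\sum_i v_p(\alpha_i)-2\max_i v_p(\alpha_i)$, and this you do not prove: the Smith-normal-form setup, the primitivity constraint $\gcd_i(d_iV_{ij})=1$ and the positivity of the kernel vector are listed as ingredients, but no argument is given that combines them into the stated $p$-adic bound — you yourself flag this as ``the main obstacle'' and write that it ``should yield'' the inequality. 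As it stands the proposal is a plan with its central step missing, so it is not a proof of the lemma.

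For comparison, the paper's proof avoids all of this bookkeeping by a direct determinant computation: it shows $g^{d-1}q_i^*\in\ZZ$ for every entry $q_i^*$ of $Q_{\Delta^*}$. Fix $i$, use that the vertex $v_i$ is \emph{primitive} (this is where Fano enters) to extend it to a lattice basis, and express the vertices $u_j$ ($j\ne i$) of $\Delta^*$ in the dual basis: the coordinate paired with $v_i$ is $\bangle{u_j,v_i}=-1$, an integer, while all remaining coordinates lie in $\tfrac{1}{g}\ZZ$ because $gu_j\in M$. Hence the maximal minor $q_i^*=|\det(u_0,\dots,\hat u_i,\dots,u_d)|$ has one integral row and $d-1$ rows with denominators dividing $g$, so $q_i^*\in\tfrac{1}{g^{d-1}}\ZZ$; then $g^{d-1}Q_{\Delta^*}$ is an integral weight system and its factor $g^{d-1}\lambda(\Delta^*)$ is an integer. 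If you want to salvage your route, you would need to actually establish the $p$-adic inequality from the SNF/primitivity/positivity data, which looks substantially harder than the two-line determinant argument above; alternatively, note that your reformulation ``$g\mid[M:M(g\Delta^*)]$'' could likely be proved by the same dual-basis trick, but then you are essentially reproducing the paper's proof.
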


\begin{proof}
Let $\Delta$ a $d$-dimensional Fano simplex of Gorenstein index $g$. For the weight system of its dual we write $Q(\Delta^*) = (q^*_0,\dots,q^*_d)$. We show that $g^{d-1} q^*_i$ is an integer for all~$i = 0,\dots,d$. Denote by $v_0,\dots,v_d$ the vertices of $\Delta$ and by $u_0,\dots,u_d$ the vertices of~$\Delta^*$, ordered in such a way that $\bangle{u_i,v_j} = -1$ holds whenever $i \ne j$. We have
\begin{equation*}
    q^*_i \ = \ |\det(u_0,\dots,\hat{u}_i,\dots,u_d)|.
\end{equation*}
Let $i \in \{0,\dots,d\}$ and extend $(v_i)$ to a basis $(v_i = b_1, b_2,\dots,b_d)$ of $\ZZ^{d}$. Denote by~$C = (c_1,\dots,c_d)$ the dual basis. For $j \ne i$ we write $u_j$ as a linear combination of the basis $C$ with coefficients~$\mu_{j1},\dots,\mu_{jd} \in \frac{1}{g}\ZZ$. We have
\begin{equation*}
    \mu_{j1} \ = \ \sum\limits_{k=1}^d \mu_{jk} \bangle{c_k,b_1} \ = \ \bangle{u_j,v_i} \ = \ -1.
\end{equation*}
Using this presentation of $u_j$ with respect to the basis $C = (c_1,\dots,c_d)$, we obtain for $q^*_i$:
\begin{equation*}
    q^*_i
    \ = \
    |\det(u_0,\dots,\hat{u}_i,\dots,u_d)|
    \ = \
    |\det
    \left[\begin{array}{ccc}
        -1 & \dots & -1 \\
        \mu_{02} & \dots & \mu_{d2} \\
        \vdots & & \vdots \\
        \mu_{0d} & \dots & \mu_{dd}
    \end{array}
    \right]|
    \ = \
    \frac{K}{g^{d-1}}
\end{equation*}
for some $K \in \ZZ_{>0}$. Thus $g^{d-1}Q(\Delta^*)$ is an integral weight system, which shows that~$g^{d-1} \lambda(\Delta^*)$ is an integer.
\end{proof}

\begin{lemma}\label{lemma:fano-simp-Pmat}
Let $\Delta$ a Fano simplex of dimension $d \ge 2$ and Gorenstein index~$g$. Let $A(\Delta) = (\alpha_0,\dots,\alpha_d)$ the associated unit fraction partition. Write $\Delta \cong \Delta(P)$, where
\begin{equation*}
    P
    \ := \ 
    \left[\begin{array}{ccccc}
        1 & a_{12} & \cdots & a_{1d} & -b_1\\
        0 & a_{22} & \cdots & a_{2d} & -b_2 \\
        \vdots & \ddots & \ddots & \vdots & \vdots \\
        0 & \cdots & 0 & a_{dd} & -b_d \\
    \end{array}\right]
\end{equation*}
is in Hermite normal form. Then the following hold:
\begin{enumerate}
    \item $a_{kk}$ divides $\alpha_{k-1}$ for all $k = 2,\dots,d$.
    \item $a_{22}$ divides $\alpha_{0}$.
    \item If $\gcd(\alpha_i,\alpha_{k-1}) = 1$ holds for all $i=0,\dots,k-2$, then we have $a_{kk} = 1$.
\end{enumerate}
\end{lemma}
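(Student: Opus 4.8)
The plan is to work directly with the columns $v_0,\dots,v_d$ of $P$ (the vertices of $\Delta$), the dual vertices $u_0,\dots,u_d\in M_\QQ$ of $\Delta^*$ (characterised by $\langle u_i,v_j\rangle=-1$ for $i\ne j$), and the identity $\langle gu_i,v_i\rangle=\alpha_i-g$. The latter comes from pairing $u_i$ with the relation $\sum_j q_jv_j=0$ of Remark~\ref{rem:ws-props}(iii), which gives $\langle u_i,v_i\rangle=|Q_\Delta|/q_i-1$, and then rewriting $|Q_\Delta|/q_i$ as $\alpha_i/g$ via Proposition~\ref{prop:a-of-delta}. Since $\Delta$ is Fano, $g_\QQ(\Delta)=1$, so $g=g_\QQ(\Delta^*)$ and hence $gu_i\in M=\ZZ^d$ for every $i$. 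The structural input from the Hermite normal form is that $v_0,\dots,v_{k-2}$ are supported on the first $k-1$ coordinates and the corresponding $(k-1)\times(k-1)$ submatrix of $P$ is upper triangular with nonzero diagonal $1,a_{22},\dots,a_{k-1,k-1}$; I will also split $v_{k-1}=v_{k-1}'+a_{kk}e_k$ with $v_{k-1}'=\sum_{l=1}^{k-1}a_{lk}e_l$.

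For (i), fix $k$ and compare the two integral forms $gu_{k-1}$ and $gu_k$ (note $k\le d$, so $v_k$ and $u_k$ exist). Both satisfy $\langle\,\cdot\,,v_j\rangle=-g$ for $j=0,\dots,k-2$; as these vectors only involve the first $k-1$ coordinates and the relevant coefficient matrix is triangular and invertible, $gu_{k-1}$ and $gu_k$ have the same first $k-1$ coordinates. Evaluating the difference on $v_{k-1}$, the first $k-1$ coordinates cancel and only the $e_k$-terms remain, so $\langle gu_{k-1},v_{k-1}\rangle-\langle gu_k,v_{k-1}\rangle=a_{kk}\bigl((gu_{k-1})_k-(gu_k)_k\bigr)$; the left-hand side equals $(\alpha_{k-1}-g)-(-g)=\alpha_{k-1}$ and the bracket on the right is an integer, so $a_{kk}\mid\alpha_{k-1}$.

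For (ii) and (iii) I would run the same idea with $gu_j$ and $gu_k$ for $j=0,\dots,k-2$. Writing $v_{k-1}'=\sum_{j=0}^{k-2}\mu_j v_j$ with $\mu_j\in\QQ$, one checks that $g(u_j-u_k)$ restricted to the span of $v_0,\dots,v_{k-2}$ equals $\alpha_j\ell_j$ where $\ell_j(v_i)=\delta_{ij}$, hence $\langle g(u_j-u_k),v_{k-1}'\rangle=\alpha_j\mu_j$; computing this pairing instead by evaluating on $v_{k-1}$ (both $gu_j,gu_k$ give $-g$ there) and peeling off the $a_{kk}e_k$-components shows it also equals $a_{kk}\bigl((gu_k)_k-(gu_j)_k\bigr)$. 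Thus $a_{kk}\mid\alpha_j\mu_j$ and $\alpha_j\mu_j\in\ZZ$ for every $j\le k-2$. Part~(ii) is the case $k=2$, where $v_1'=a_{12}e_1=a_{12}v_0$, so $\mu_0=a_{12}$ and $a_{22}\mid a_{12}\alpha_0$; primitivity of $v_1$ gives $\gcd(a_{12},a_{22})=1$, whence $a_{22}\mid\alpha_0$. For (iii), assume $\gcd(\alpha_i,\alpha_{k-1})=1$ for all $i\le k-2$ and let $p$ be a prime dividing $a_{kk}$; by (i) $p\mid\alpha_{k-1}$, so $p\nmid\alpha_j$ for $j\le k-2$, and then $p\mid\alpha_j\mu_j$ forces the $p$-adic valuation $v_p(\mu_j)\ge1$ for every $j$. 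Hence $p$ divides every coordinate $a_{lk}$ ($l=1,\dots,k-1$) of $v_{k-1}'=\sum_j\mu_j v_j$; together with $p\mid a_{kk}$ this contradicts the primitivity of $v_{k-1}=\sum_{l=1}^k a_{lk}e_l$, so $a_{kk}=1$.

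The only genuinely delicate point is choosing the auxiliary forms correctly: one differences $gu_{k-1}$ (resp. $gu_j$) against $gu_k$ precisely because $u_{k-1}$, $u_j$, and $u_k$ all take the value $-1$ on the vertices $v_0,\dots,v_{k-2}$ controlling the triangular block, so that after restriction to the first $k-1$ coordinates the two integral forms become equal (resp. differ by one explicit functional). After that, everything is coordinate bookkeeping, plus, for (iii), a one-line valuation argument using primitivity of $v_{k-1}$. A minor point to keep track of is that $u_k$ is available only because $k\le d$, which is exactly the range of indices in the statement.
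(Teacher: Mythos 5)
Your argument is correct: the identity $\langle gu_i,v_i\rangle=\alpha_i-g$, the integrality $gu_i\in\ZZ^d$ (from $g_\QQ(\Delta)=1$), the fact that forms agreeing on $v_0,\dots,v_{k-2}$ have equal first $k-1$ coordinates by triangularity, and the final pairing against $v_{k-1}=v_{k-1}'+a_{kk}e_k$ all check out, as does the valuation step in (iii). The underlying mechanism is the same as the paper's (compare $g$-scaled Gorenstein forms, use integrality and primitivity of the Hermite-form columns), but the organization differs in two places. For (i) the paper does not argue directly in this lemma but points to the proof of Proposition \ref{prop:simp-Pmat-finite}, where the shared leading coordinates are encoded in a recursively defined sequence $q_j$ and one evaluates $u_{k-1}$ on $v_k$; your difference $g(u_{k-1}-u_k)$ paired with $v_{k-1}$ is a self-contained repackaging of the same computation. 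For (iii) the divergence is more substantive: the paper inducts on $l\le k$, using (i), (ii) and explicit Gorenstein forms (computed under the inductive assumption $a_{ii}=1$ for $i<l$) to force $a_{il}=0$ and then $a_{ll}=1$, an induction that implicitly leans on coprimality among the earlier $\alpha_i$ as well; your argument instead takes a single prime $p\mid a_{kk}$, uses $a_{kk}\mid\alpha_j\mu_j$ with $p\nmid\alpha_j$ to get $v_p(\mu_j)\ge 1$, and contradicts primitivity of $v_{k-1}$ directly. This avoids the induction entirely and uses exactly the hypothesis as stated ($\gcd(\alpha_i,\alpha_{k-1})=1$ for $i\le k-2$ only), which is a small but genuine gain in both economy and precision; what the paper's route buys in exchange is the explicit vanishing $a_{il}=0$ and the closed formulas for the Gorenstein forms, which it reuses later in the proofs of Theorems \ref{thm:max-vol} and \ref{thm:fwps-max-mult}.
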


\begin{proof}
For item (i) we refer to the proof of Proposition \ref{prop:simp-Pmat-finite}. We prove~(ii). Denote the columns of $P$ by $v_0,\dots,v_d$. For the first and the last Gorenstein form of~$\Delta$ we have
\begin{equation*}
    u_0 \ = \ \left( \frac{\alpha_0}{g}-1, \frac{a_{12}-1}{a_{22}} - \frac{a_{12} \alpha_0}{a_{22}g}, u_{03}, \dots, u_{0d} \right) \ \in \ \frac{1}{g}\ZZ,
\end{equation*}
\begin{equation*}
    u_d \ = \ \left( -1, \frac{a_{12}-1}{a_{22}}, u_{d3}, \dots, u_{dd} \right) \ \in \ \frac{1}{g}\ZZ.
\end{equation*}
Taking their difference, we see that $a_{12}\alpha_0/a_{22}$ must be an integer. Since $a_{12}$ and $a_{22}$ are coprime, this means that $a_{22}$ divides $\alpha_0$. We prove (iii). Assume that~$\gcd(\alpha_0,\dots,\alpha_k) = 1$ holds. We show by induction on $l$ that $a_{ll} = 1$ holds for all $l \le k$. For $l = 1$ there is nothing to prove. Let $l = 2$. By item (i), $a_{22}$ divides~$\alpha_1$ and by item (ii), $a_{22}$ divides $\alpha_0$. As they are coprime, we obtain $a_{22} = 1$. Now assume $l > 2$ and $a_{ii} = 1$ for all $i < l$. Then the $i$th Gorenstein form for $i < l$ and the last Gorenstein form of $\Delta$ are given by
\begin{equation*}
    u_i \ = \ \left( -1, \dots, \frac{\alpha_{i-1}}{g} - 1, \dots, -1, u_{il}, \dots, u_{id} \right) \ \in \ \frac{1}{g}\ZZ,
\end{equation*}
\begin{equation*}
    u_d \ = \ \left( -1, \dots, -1, u_{dl}, \dots, u_{dd} \right) \ \in \ \frac{1}{g}\ZZ,
\end{equation*}
where the entry $\alpha_{i-1}/g-1$ of $u_i$ is at the $i$th position. Evaluating their difference on the vector $v_{l-1} = (a_{1l},\dots,a_{ll},0,\dots,0)$ shows that $a_{ll}$ divides $\alpha_{i-1}a_{il}$. Since~$a_{ll}$ divides $\alpha_{k-1}$ by item (i), it is coprime to $\alpha_{i-1}$. Thus $a_{ll}$ divides $a_{il}$. This is only possible if $a_{il} = 0$. Now, the column $v_{l-1}$ is a primitive point in $\ZZ^d$. This yields~$a_{ll} = 1$.
\end{proof}

\begin{proposition}\label{prop:w-not-(1,1,1)}
Let $\Delta$ a Fano triangle of Gorenstein index $g$. If $Q_\Delta^\red = (1,1,1)$ holds then $g$ is odd.
\end{proposition}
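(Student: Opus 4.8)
The plan is to pass to convenient coordinates and read off the relevant parity directly from the primitivity of the vertices. Let $v_0,v_1,v_2$ be the vertices of $\Delta$. By Remark~\ref{rem:ws-props}~(iii), the hypothesis $Q_\Delta^\red = (1,1,1)$ means $v_0+v_1+v_2 = 0$. Since $\Delta$ is Fano, $v_0$ is primitive, so after applying a lattice automorphism we may assume $N = \ZZ^2$ and $v_0 = (1,0)$; writing $v_1 = (a,c)$, the vectors $v_0$ and $v_1$ are linearly independent (otherwise all three $v_i$ would be collinear and $\Delta$ could not be a triangle with the origin in its interior), so $c\ne 0$, and composing if necessary with $\diag(1,-1)$ we may assume $c\ge 1$. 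Then $v_2 = -v_0-v_1 = (-1-a,-c)$.

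The crucial observation, which I would make first, is that $c$ is odd. Primitivity of $v_1 = (a,c)$ gives $\gcd(a,c) = 1$, and primitivity of $v_2 = -(1+a,c)$ gives $\gcd(1+a,c) = 1$. If $c$ were even, the former would force $a$ to be odd while the latter would force $a$ to be even, a contradiction. Hence $c$ is odd.

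Next I would identify $c$ with the multiplicity: since $v_2 = -v_0-v_1$, the lattice $N(\Delta)$ generated by the vertices equals $\ZZ v_0 + \ZZ v_1$, so $\mult(\Delta) = [\ZZ^2 : \ZZ v_0 + \ZZ v_1] = |\det(v_0,v_1)| = c$; equivalently $Q_\Delta = \mult(\Delta)\cdot(1,1,1)$, so each weight $q_i$ equals $c$. It then remains to show that $g(\Delta)$ divides $c$. Since $\Delta$ is a lattice simplex, $g_\QQ(\Delta) = 1$, so $g(\Delta) = g_\QQ(\Delta^*)$. The vertices $u_0,u_1,u_2$ of $\Delta^*$ are each determined by a linear system whose coefficient matrix consists of two of the $v_j$ and whose right-hand side is integral; as the absolute value of the determinant of that matrix is $q_i = c$, Cramer's rule gives $u_i\in\tfrac1c M$ for all $i$. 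Hence $c\,\Delta^*$ is a lattice polytope, so $g_\QQ(\Delta^*)$ divides $c$, and therefore $g = g(\Delta)$ divides the odd number $c$ and is itself odd.

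I expect the only subtle point to be the parity argument in the second step; everything else is routine, the one mild additional input being that $g(\Delta)$ divides $\mult(\Delta)$ whenever $Q_\Delta^\red = (1,1,1)$, which follows from the determinants of the defining linear systems of the Gorenstein forms all being equal to $\mult(\Delta)$.
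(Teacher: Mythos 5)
Your proof is correct and follows essentially the same route as the paper: reduce to coordinates with $v_0=(1,0)$, $v_1=(a,c)$, deduce that $c$ is odd from primitivity of the two remaining vertices, and conclude by showing the Gorenstein index divides $c$. The only cosmetic difference is that the paper writes out the three Gorenstein forms explicitly to see they lie in $\tfrac1c\ZZ^2$, whereas you get the same fact via Cramer's rule.
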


\begin{proof}
Let $\Delta$ a Fano triangle with even Gorenstein index $g$ and assume that~$Q_\Delta^\red~=~(1,1,1)$ holds. The unit fraction partition of $g$ associated with~$\Delta$ is
\begin{equation*}
    A(\Delta) \ = \ (\alpha_0,\alpha_1,\alpha_2) \ = \ (3g, 3g, 3g).
\end{equation*}
Let $P \in \Mat(2,3;\ZZ)$ such that $\Delta(P) \cong \Delta$ holds. We may write
\begin{equation*}
    P \ = \
    \left[\begin{array}{rrr}
        1 & a & -(a+1)\\
        0 & b & -b
    \end{array}\right]
\end{equation*}
for some non-negative $a,b \in \ZZ$. Note that for the columns of $P$ to all be primitive,~$b$ must be odd. The Gorenstein forms $u_0,u_1,u_2$ of $\Delta$ are given by
\begin{equation*}
    u_0 \ = \ \left( 2 , - \frac{2a+1}{b} \right), \qquad u_1 \ = \ \left( -1 , \frac{a+2}{b} \right), \qquad u_2 \ = \ \left( -1 , \frac{a-1}{b} \right).
\end{equation*}
Thus the local Gorenstein indices $g_0,g_1,g_2$ of $\Delta$ all divide $b$. In particular, the Gorenstein index $g = \lcm(g_0,g_1,g_2)$ divides $b$. Since $g$ is even, this contradicts the fact that $b$ is odd. Thus $Q_\Delta^\red$ cannot be equal to $(1,1,1)$.
\end{proof}

\begin{proof}[Proof of Theorem \ref{thm:max-vol}]
Let $\Delta$ a $d$-dimensional IP lattice simplex of Gorenstein index~$g$ and associated unit fraction partition $A(\Delta) = (\alpha_0,\dots,\alpha_d)$. We may assume that~$\alpha_0~\le~\dots~\le~\alpha_d$ holds. By Lemma \ref{lemma:lambdastar-denom}, the product $g^{d-1}\lambda(\Delta^*)$ is an integer. In particular $g^{d-1}\lambda(\Delta^*) \ge 1$ holds. With Proposition \ref{prop:uf-vol} (ii) we obtain the following volume bound for $\Delta$:
\begin{equation}\label{eq:vol-ineq}
    \Vol(\Delta) \ \le \ g^{d-1} \lambda(\Delta^*)\Vol(\Delta) \ = \ \frac{1}{g}\frac{\alpha_0 \cdots \alpha_d}{\lcm(\alpha_0,\dots,\alpha_d)}.
\end{equation}
Equality holds if and only if $\lambda(\Delta^*) = 1/g^{d-1}$, which by Proposition \ref{prop:uf-vol} (iv) is equivalent to 
\begin{equation*}
    \lambda(\Delta) \ = \ \frac{\alpha_0 \cdots \alpha_d}{ \lcm(\alpha_0,\dots,\alpha_d)^2 }.
\end{equation*}
We use Theorem \ref{thm:ufp-bounds} (ii) to bound the right hand side of Equation \ref{eq:vol-ineq} from above. In case $(d,g) = (2,1)$ we have
\begin{equation*}
    \Vol(\Delta)
    \ \le \
    \frac{1}{g}\frac{\alpha_0 \cdots \alpha_d}{\lcm(\alpha_0,\dots,\alpha_d)}
    \ = \
    \frac{\alpha_0 \cdots \alpha_d}{\lcm(\alpha_0,\dots,\alpha_d)}
    \ \le \
    9.
\end{equation*}
If equality holds, then we have $A(\Delta) = (3,3,3)$, ie. $Q_\Delta^\red = (1,1,1)$. Thus~$\Delta$ is isomorphic to $H \Delta(1,1,1)$ for some $2\times 2$ integer matrix $H$ with $\det(H) = \lambda(\Delta) = 3$. We may assume that $H$ is in Hermite normal form, thus we have $\Delta \cong \Delta(P)$ with
\begin{equation*}
    P
    \ = \
    \left[\begin{array}{ccc}
        1 & a & -(a+1) \\
        0 & 3 & -3
    \end{array}\right],
\end{equation*}
for $a \in \{1,2\}$. The two choices of $a$ lead to isomorphic simplices. We may choose~$a = 1$, which yields
\begin{equation*}
    P
    \ = \
    \left[\begin{array}{ccc}
        1 & 1 & -2 \\
        0 & 3 & -3
    \end{array}\right].
\end{equation*}
Now assume $(d,g) \ne (2,1)$ holds. Then by Theorem \ref{thm:ufp-bounds} (ii) we have
\begin{equation*}
    \Vol(\Delta)
    \ \le \
    \frac{1}{g}\frac{\alpha_0 \cdots \alpha_d}{\lcm(\alpha_0,\dots,\alpha_d)}
    \ \le \
    \frac{2t_{g,d}^2}{g^2}.
\end{equation*}
If equality holds, then we have $A(\Delta) = A$, where $A$ is one of the following:
\begin{equation*}
    A \ = \ (6,6,6), \qquad A \ = \ (2,6,6,6), \qquad A \ = \ (s_{g,1},\dots,s_{g,d-1},2t_{g,d},2t_{g,d}). 
\end{equation*}
Note that by Proposition \ref{prop:w-not-(1,1,1)} there is no Fano simplex $\Delta$ with associated unit fraction partition $A(\Delta) = (6,6,6)$. The other two cases give the following reduced weight systems
\begin{equation*}
    Q \ = \ (3,1,1,1),
    \qquad\qquad
    Q \ = \ \left( \frac{2t_{g,d}}{s_{g,1}},\dots,\frac{2t_{g,d}}{s_{g,d-1}}, 1, 1 \right)
\end{equation*}
and $\Delta$ is isomorphic to $H \Delta(Q)$, where $Q$ is one of the reduced weight systems above and~$H$ is a square integer matrix in Hermite normal form with $\det(H) = \lambda(\Delta)$. We first consider the case
\begin{equation*}
    A(\Delta) \ = \ (2,6,6,6),
    \qquad
    g(\Delta) \ = \ 1,
    \qquad
    Q_\Delta^\red = (3,1,1,1),
    \qquad
    \lambda(\Delta) \ = \ 12.
\end{equation*}
We consider the diagonal entries $(a_{11},a_{22},a_{33})$. Since $\Delta$ is a Fano simplex we have~$a_{11} = 1$. Moreover, Propositions \ref{prop:simp-Pmat-constr} and \ref{prop:simp-Pmat-finite} tell us that $a_{22}$ and $a_{33}$ are both divisors of $6$. As we have
\begin{equation*}
    a_{22}\cdot a_{33} \ = \ \det(H) \ = \ \lambda(\Delta) \ = \ 12,
\end{equation*}
this leaves for the diagonal of $H$ only the two possibilities $(a_{11},a_{22},a_{33}) = (1,2,6)$ and~$(a_{11},a_{22},a_{33}) = (1,6,2)$. The second case can be transformed into the first by changing the second and third column of $H$ and bringing it in Hermite normal form again. Thus there are $0 \le a,b < 5$ such that $\Delta \cong \Delta(P)$, where
\begin{equation*}
    P
    \ = \
    \left[\begin{array}{rrrr}
        1 & 1 & a & -(4+a)\\
        0 & 2 & b & -(2+b)\\
        0 & 0 & 6 & -6
    \end{array}\right].
\end{equation*}
The Gorenstein forms of $\Delta$ are then given by
\begin{equation*}
    u_0 \ = \ \left( 1, -1, \frac{b-a-1}{6} \right),
    \qquad
    u_1 \ = \ \left( -1, 3, \frac{a-3b-1}{6} \right),
\end{equation*}
\begin{equation*}
    u_2 \ = \ \left( -1, 0, \frac{5+a}{6} \right),
    \qquad
    u_3 \ = \ \left( -1, 0, \frac{a-1}{6} \right).
\end{equation*}
Since $\Delta$ is of Gorenstein index $1$, all its Gorenstein forms are integral. The last entry of $u_3$ thus dictates $a = 1$. Plugging this into $P$, we obtain $u_1 = (-1,3,b/2)$. We obtain~$b = 2$ and $P$ is the first matrix from Theorem \ref{thm:max-vol} (ii). Now consider the case
\begin{equation*}
    A(\Delta) \ = \ (s_{g,1},\dots,s_{g,d-1},2t_{g,d},2t_{g,d}),
    \qquad
    g(\Delta) \ = \ g,
\end{equation*}
\begin{equation*}
    Q_\Delta^\red \ = \ \left( \frac{2t_{g,d}}{s_{g,1}},\dots,\frac{2t_{g,d}}{s_{g,d-1}}, 1, 1 \right),
    \qquad
    \lambda(\Delta) \ = \ \frac{t_{g,d}}{g}.
\end{equation*}
We have $\Delta \cong H\cdot \Delta(Q_\Delta^\red) = \Delta(P)$, where the matrices $H$ and $P$ are given by
\begin{equation*}
    H
    \ = \
    \left[\begin{array}{cccc}
        1 & a_{12} & \cdots & a_{1d} \\
        0 & a_{22} & \cdots & a_{2d} \\
        \vdots & \ddots & \ddots & \vdots \\
        0 & \cdots & 0 & a_{dd} \\
    \end{array}\right],
    \qquad
    P
    \ = \
    \left[\begin{array}{ccccc}
        1 & a_{12} & \cdots & a_{1d} & -b_1\\
        0 & a_{22} & \cdots & a_{2d} & -b_2 \\
        \vdots & \ddots & \ddots & \vdots & \vdots \\
        0 & \cdots & 0 & a_{dd} & -b_d \\
    \end{array}\right].
\end{equation*}
The entries $b_1,\dots,b_d$ in the last column of $P$ can be expressed via the $a_{ij}$ by solving the linear system~$P\cdot Q_\Delta^\red = 0$. The determinant of $H$ satisfies
\begin{equation*}
    \det(H) \ = \ \lambda(\Delta) \ = \ \frac{t_{g,d}}{g} \ = \ s_{g,1} \cdots s_{g,d-1}. 
\end{equation*}
We first consider the case $d = 2$. Then $a_{22} = s_{g,1} = g+1$ holds. The last Gorenstein form of $\Delta$ reads
\begin{equation*}
    u_2 \ = \ \left( -1, \frac{a_{12}-1}{g+1} \right)
\end{equation*}
and we have $0 \le a_{12} < g+1$. The entries of $u_2$ are in $\frac{1}{g}\ZZ$. Thus $a_{12}-1$ is a multiple of~$g+1$. This is only possible for $a_{12} = 1$, which yields
\begin{equation*}
    P
    \ = \
    \left[\begin{array}{rrr}
        1 & 1 & -(2g+1)\\
        0 & g+1 & -(g+1)
    \end{array}\right]
    \ = \
    \left[\begin{array}{rrr}
        1 & \frac{(s_{g,1}-g)}{s_{g,1}}\frac{t_{g,2}}{g} & -\frac{(s_{g,1}+g)}{s_{g,1}}\frac{t_{g,2}}{g}\\
        0 & \frac{t_{g,2}}{g} & -\frac{t_{g,2}}{g}
    \end{array}\right].
\end{equation*}
Now assume $d > 2$. Note that the entries $\alpha_0,\dots,\alpha_{d-2}$ of the ufp $A(\Delta)$ are pairwise coprime. By Lemma \ref{lemma:fano-simp-Pmat} (iii) we have $a_{kk} = 1$ for all $k = 2,\dots,d-1$. Moreover we obtain $a_{dd} = \det(H) = s_{g,1} \cdots s_{g,d-1}$. We now show that $a_{kd} = \frac{(s_{g,k}-g)}{s_{g,k}}\frac{t_{g,d}}{g}$ holds for all~$1 \le k \le d-1$.  We set $m := a_{1d} + \dots + a_{(d-1)d} - 1$. The Gorenstein forms of $\Delta$ are given by
\begin{equation*}
    u_{k-1} \ = \ \left( -1, \dots, \frac{s_{g,k}}{g}-1, \dots, -1, \frac{m}{a_{dd}} - \frac{s_{g,k}\,a_{kd}}{g\,a_{dd}} \right) \ \in \ \frac{1}{g} \ZZ,
\end{equation*}
\begin{equation*}
    u_d \ = \ \left( -1, \dots, -1, \frac{m}{a_{dd}} \right) \ \in \ \frac{1}{g} \ZZ,
\end{equation*}
where $k = 1,\dots,d$ and the entry $s_{g,k}/g-1$ of $u_{k-1}$ is at the $k$th position. Note that~$a_{dd}$ is coprime to $g$. The last entry of $u_d$ thus dictates that $a_{dd}$ divides $m$. Moreover, by the last entry of $u_{k-1}$, $a_{kd}$ is a multiple of $s_{g,1} \cdots \hat{s}_{g,k} \cdots s_{g,d-1}$, where~$\hat{s}_{g,k}$ means, that $s_{g,k}$ is omitted in the product. There is thus $\Lambda_k \in \ZZ$ with
\begin{equation*}
    a_{kd} \ = \ \Lambda_k \, s_{g,1} \cdots \hat{s}_{g,k} \cdots s_{g,d-1}.
\end{equation*}
Using these $\Lambda_k$, we can write the integer $m$ as
\begin{equation*}
    m \ = \ \frac{a_{1d} + \dots + a_{(d-1)d} - 1}{a_{dd}} \ = \ \frac{\Lambda_1}{s_{g,1}} + \dots + \frac{\Lambda_{d-1}}{s_{g,d-1}} - \frac{g}{t_{g,d}}.
\end{equation*}
We now treat the $\Lambda_1,\dots,\Lambda_{d-1}$ as indeterminates. Note that they only appear in the last entry of the Gorenstein forms $u_0,\dots,u_{d}$, whereas they appear only in the first~$d-1$ entries of the last column $v_{d}$ of $P$. Evaluating $u_0,\dots,u_{d-2}$ on $v_d$ thus gives a system of~$d-1$ linear equations in the $d-1$ variables $\mu_1,\dots,\mu_{d-1}$, which are independent since the Gorenstein forms $u_0,\dots,u_{d-2}$ are linearly independent. This system thus has at most one solution. A direct computation shows that the choice~$\Lambda_k = s_{g,k} - g$ is a solution for that system. This shows that $P$ is the second matrix in Theorem \ref{thm:max-vol} (ii), which completes the proof of the Theorem.
\end{proof}

\begin{proof}[Proof of Theorem \ref{thm:max-mahler}]
Let $\Delta$ a $d$-dimensional IP simplex of Gorenstein index~$g$ and associated unit fraction partition $A(\Delta) = (\alpha_0,\dots,\alpha_d)$. We may assume that $A$ is ordered, ie. that~$\alpha_0 \le \dots \le \alpha_d$ holds. By Proposition \ref{prop:uf-vol} (i) we have
\begin{equation*}
    \Vol(\Delta)\Vol(\Delta^*) \ = \ \frac{\alpha_0 \cdots \alpha_d}{g^{d+1}}.
\end{equation*}
By Theorem \ref{thm:ufp-bounds} (i) the numerator of the right hand side is bounded by $t_{g,d+1}^2/g$. Thus we obtain
\begin{equation*}
    \Vol(\Delta)\Vol(\Delta^*) \ \le \ \frac{t_{g,d+1}^2}{g^{d+2}}.
\end{equation*}
If equality holds, then by Theorem \ref{thm:ufp-bounds} (i) we have $A(\Delta) = (s_{g,1},\dots,s_{g,d},t_{g,d+1})$, which is equivalent to
\begin{equation*}
    Q_\Delta^\red \ = \ Q(A(\Delta)) \ = \ \left( \frac{t_{g,d+1}}{s_{g,1}}, \dots, \frac{t_{g,d+1}}{s_{g,d}}, 1 \right).
\end{equation*}
On the other hand, assume that $Q_\Delta^\red$ is of this form. Let $\Delta' = \Delta(Q_\Delta^\red)$. There is~$H \in \GL(d+1,\QQ)$ such that $\Delta \cong H\cdot \Delta'$ holds. For the Mahler volume of $\Delta$ we thus obtain
\begin{equation*}
    \Vol(\Delta) \Vol(\Delta^*)
    \ = \
    \Vol( H \Delta' ) \, \Vol( (H^*)^{-1} (\Delta')^* )
    \ = \
    \Vol( \Delta' ) \Vol( (\Delta')^* )
    \ = \
    \frac{t_{g,d+1}^2}{g^{d+2}}.
\end{equation*}
\end{proof}

\begin{proof}[Proof of Theorem \ref{thm:fwps-max-mult}]
Let $\Delta$ a $d$-dimensional Fano simplex of Gorenstein index $g$ and associated unit fraction partition $A(\Delta) = (\alpha_0,\dots,\alpha_d)$. We may assume that the entries of $A(\Delta)$ satisfy $\alpha_0 \le \dots \le \alpha_d$. By Proposition \ref{prop:uf-vol} (iv) and Lemma \ref{lemma:lambdastar-denom} we have
\begin{equation}\label{eq:fact-bound}
    \lambda(\Delta)
    \ \le \
    g^{d-1} \lambda(\Delta) \lambda(\Delta^*)
    \ = \
    \frac{\alpha_0\cdots \alpha_d}{\lcm(\alpha_0,\dots,\alpha_d)^2}.
\end{equation}
We prove (i). Let $d = 3$ and $g \in \{1,2\}$. By Theorem \ref{thm:ufp-bounds} (iii) the right hand side of Equation \ref{eq:fact-bound} is bounded by $16g^2$. Assume $\lambda(\Delta) = 16g$ holds. Then we have
\begin{equation*}
    A(\Delta) \ = \ (4g,4g,4g,4g), \qquad Q_\Delta^\red \ = \ (1,1,1,1)
\end{equation*}
and there is a $3 \times 3$ integer matrix $H$ in Hermite normal form with determinant equal to~$\lambda(\Delta) = 16g^2$, such that $\Delta \cong H \cdot \Delta(1,1,1,1)$ holds. Thus we can write~$\Delta \cong \Delta(P)$ with
\begin{equation*}
    P
    \ = \ 
    \left[\begin{array}{rrrr}
        1 & a_{12} & a_{13} & -(a_{12}+a_{13}+1) \\
        0 & a_{22} & a_{23} & -(a_{22}+a_{23}) \\
        0 & 0      & a_{33} & -a_{33}
    \end{array}\right]
\end{equation*}
in Hermite normal form. By Lemma \ref{lemma:fano-simp-Pmat} (i), $a_{22}$ and $a_{33}$ each divide~$4g$, moreover we have $a_{22}\cdot a_{33} = \det(H) = 16g^2$. Thus $a_{22} = a_{33} = 4g$ holds. The difference of the Gorenstein forms $u_1$ and $u_2$ of $\Delta$ is given by
\begin{equation*}
   u_1 - u_2 \ = \ \left( 0, \frac{1}{g}, - \frac{a_{23}+4g}{4g^2} \right) \ \in \ \frac{1}{g}\ZZ.
\end{equation*}
Thus $4g$ divides $a_{23}$. This is only possible for $a_{23} = 0$. The last Gorenstein form of~$\Delta$ then reads
\begin{equation*}
    u_{3} \ = \ \left( -1, \frac{a_{12}-1}{4g}, \frac{a_{13}-1}{4g} \right),
\end{equation*}
which yields $a_{12} = 4k+1$ and $a_{13} = 4l+1$ for some $k,l \in \ZZ$. Taking the restrictions on~$a_{12}$ and $a_{13}$ into account we obtain $0 \le k,l \le g-1$. In case $g = 1$ we have~$k = 0$ and~$l = 0$. In case $g = 2$ the different choices for $k$ and $l$ lead to isomorphic simplices. We may thus assume $k = l = 0$ and $P$ is of the form stated in Theorem~\ref{thm:fwps-max-mult} (i).

We prove (ii). Let $(d,g) = (4,1)$. By Equation 
\ref{eq:fact-bound} and Theorem \ref{thm:ufp-bounds} (iii) we have $\lambda(\Delta) \le 128$. If equality holds, then we have
\begin{equation*}
    A(\Delta) \ = \ (2,8,8,8,8), \qquad Q_\Delta^\red \ = \ (4,1,1,1,1), \qquad \lambda(\Delta) \ = \ 128.
\end{equation*}
There is a $4 \times 4$ integer matrix $H$ in Hermite normal form with determinant equal to~$\lambda(\Delta) = 128$, such that $\Delta \cong H \cdot \Delta(4,1,1,1,1)$ holds. Thus $\Delta \cong \Delta(P)$ holds with
\begin{equation*}
    P
    \ = \ 
    \left[\begin{array}{rrrrr}
        1 & a_{12} & a_{13} & a_{14} & -(4+a_{12}+a_{13}+a_{14}) \\
        0 & a_{22} & a_{23} & a_{24} & -(a_{22}+a_{23}+a_{24}) \\
        0 & 0      & a_{33} & a_{34} & -(a_{33}+a_{34}) \\
        0 & 0      & 0      & a_{44} & -a_{44}
    \end{array}\right]
\end{equation*}
in Hermite normal form. By Lemma \ref{lemma:fano-simp-Pmat} we have $a_{22} \mid 2$ and $a_{33},a_{44} \mid 8$. Moreover the product of the diagonal entries is the determinant of $H$. The only possibility for the diagonal is thus $(a_{22},a_{33},a_{44}) = (2,8,8)$. Calculating the Gorenstein forms of $\Delta$ and using the fact that $\Delta$ is of Gorenstein index $1$, we obtain
\begin{equation*}
    a_{12} \ = \ a_{13} \ = \ a_{14} \ = \ 1, \qquad a_{23} \ = \ a_{24} \ = \ 2, \qquad a_{34} \ = \ 8.
\end{equation*}
This shows that $P$ is the matrix from Theorem \ref{thm:fwps-max-mult} (ii).

We prove (iii). Assume that $(d,g)$ is neither of $(3,1),$ $(3,2),$ $(4,1)$. By Equation~\ref{eq:fact-bound} and Theorem \ref{thm:ufp-bounds} (iii)-(c) we have
\begin{equation*}
    \lambda(\Delta) \ \le \ \frac{3 t_{g,d-1}^2}{g}.
\end{equation*}
If equality holds, then we have $A(\Delta) = A$, where $A$ is one of the following unit fraction partitions:
\begin{equation*}
    A \ = \ (12,12,12,12), \qquad A \ = \ (s_{g,1},\dots,s_{g,d-2},3t_{g,d-1},3t_{g,d-1},3t_{g,d-1}).
\end{equation*}
In the first case we are in the situation $(d,g) = (3,3)$ and we have
\begin{equation*}
    A(\Delta) \ = \ (12,12,12,12), \qquad Q_{\Delta}^\red \ = \ (1,1,1,1), \qquad \lambda(\Delta) \ = \ 144.
\end{equation*}
Again, we have $\Delta \cong \Delta(P)$ with
\begin{equation*}
    P
    \ = \ 
    \left[\begin{array}{rrrr}
        1 & a_{12} & a_{13} & -(1+a_{12}+a_{13}) \\
        0 & a_{22} & a_{23} & -(a_{22}+a_{23}) \\
        0 & 0      & a_{33} & -a_{33}
    \end{array}\right]
\end{equation*}
in Hermite normal form. By Lemma \ref{lemma:fano-simp-Pmat} both $a_{22}$ and $a_{33}$ are divisors of 12. Moreover we have $a_{22} \cdot a_{33} = \lambda(\Delta) = 144$. Thus $a_{22} = a_{33} = 12$ holds. Calculating the Gorenstein forms of $\Delta$ and using the fact that $\Delta$ is of Gorenstein index $3$, we obtain
\begin{equation*}
    a_{23} \ = \ 0, \qquad a_{12} \ = \ 4k+1, \qquad a_{13} \ = \ 4l+1,
\end{equation*}
where $0 \le k,l \le 2$. The cases $k = 2$ and $l = 2$, as well as $(k,l) = (0,0)$ lead to a non-primitive column of $P$. Thus these cases are excluded. All other choices for $k,l$ lead to isomorphic matrices. We may thus choose $(k,l) = (0,1)$ and $P$ is the first matrix from Theorem \ref{thm:fwps-max-mult} (iii). We now consider the second possible unit fraction partition for $\Delta$, ie. we have $(d,g) \ne (3,3)$ and
\begin{equation*}
    A(\Delta) \ = \ (s_{g,1},\dots,s_{g,d-2},3t_{g,d-1},3t_{g,d-1},3t_{g,d-1}),
\end{equation*}
\begin{equation*}
    Q_\Delta^\red \ = \ \left( \frac{3t_{g,d-1}}{s_{g,1}}, \dots, \frac{3t_{g,d-1}}{s_{g,1}}, 1, 1, 1 \right),
    \qquad
    \lambda(\Delta) \ = \ \frac{3 t_{g,d-1}^2}{g}.
\end{equation*}
As we have done before, we can write $\Delta \cong \Delta(P)$, where
\begin{equation*}
    P
    \ = \
    \left[\begin{array}{ccccc}
        1 & a_{12} & \cdots & a_{1d} & -b_1\\
        0 & a_{22} & \cdots & a_{2d} & -b_2 \\
        \vdots & \ddots & \ddots & \vdots & \vdots \\
        0 & \cdots & 0 & a_{dd} & -b_d \\
    \end{array}\right]
\end{equation*}
is in Hermite normal form. The entries $b_k$ of the last column of $P$ can be computed from the entries $a_{k,j}$ by solving the linear system $P\cdot Q_\Delta^\red = 0$. Moreover we have
\begin{equation*}
    a_{22} \cdots a_{dd} \ = \ \lambda(\Delta) \ = \ \frac{3t_{g,d-1}^2}{g} \ = \ 3 t_{g,d-1} \cdot s_{g,1} \cdots s_{g,d-2}.
\end{equation*}
Note that the entries $\alpha_{0}, \dots, \alpha_{d-3}$ of $A(\Delta)$ are pairwise coprime. Thus for the diagonal entries of $P$ Lemma \ref{lemma:fano-simp-Pmat} (iii) yields~$a_{22} = \dots = a_{(d-2)(d-2)} = 1$. Moreover both~$a_{(d-1)(d-1)}$ and $a_{dd}$ are divisors of $3t_{g,d-1}$ by Lemma \ref{lemma:fano-simp-Pmat} (i). Comparing this to the product of the diagonal entries we obtain that both $a_{(d-1)(d-1)}$ and $a_{dd}$ are multiples of the product~$s_{g,1} \cdots s_{g,d-2}$ and that
\begin{equation*}
    a_{(d-1)(d-1)}\cdot a_{dd} \ = \ s_{g,1} \cdots s_{g,d-2} \cdot 3g \cdot s_{g,1} \cdots s_{g,d-2}
\end{equation*}
holds.  We can thus write $a_{(d-1)(d-1)} = \Lambda s_{g,1} \cdots s_{g,d-2}$ for some divisor $\Delta$ of $3g$. We show that $\Lambda = 1$ holds. Set $m := a_{1(d-1)} + \dots + a_{(d-2)(d-1)} - 1$. The Gorenstein forms $u_d$ as well as $u_{k}$ for~$k = 0,\dots, d-3$ of $\Delta$ are given by
\begin{equation*}
    u_k \ = \ \left(-1, \dots, \frac{\alpha_k}{g}-1, \dots, -1, \frac{m}{a_{(d-1)(d-1)}} - \frac{\alpha_{k}a_{(k+1)(d-1)}}{ga_{(d-1)(d-1)}}, u_{kd}\right) \ \in \frac{1}{g} \ZZ,
\end{equation*}
\begin{equation*}
    u_d \ = \ \left( -1, \dots, -1, \frac{m}{a_{(d-1)(d-1)}}, u_{dd} \right) \ \in \frac{1}{g} \ZZ.
\end{equation*}
Here the entry $\alpha_k/g-1$ is at the position $k+1$ of $u_{k}$. Let $1 \le k \le d-2$. Comparing the second to last entries of $u_{k-1}$ and $u_d$ and using the fact that they are in $\frac{1}{g}\ZZ$, we must have that $a_{(d-1)(d-1)}$ divides $\alpha_{k-1} a_{k(d-1)} = s_{g,k}a_{k(d-1)}$. Thus we can write
\begin{equation*}
    a_{k(d-1)} \ = \ \Lambda_k s_{g,1} \cdots \hat{s}_{g,k} \cdots s_{g,d-2}
\end{equation*}
for some $\Lambda_{k} \in \ZZ_{\ge 1}$. Here $\hat{s}_{g,k}$ means that $s_{g,k}$ is omitted. Note that since $s_{g,k} a_{k(d-1)}$ is a multiple of $a_{(d-1)(d-1)}$, the number $\Lambda_k$ is a multiple of $\Lambda$. So in order for the column~$v_{d-2}$ to be primitive, $\Lambda$ must be equal to $1$. We thus have
\begin{equation*}
    a_{(d-1)(d-1)} \ = \ s_{g,1} \cdots s_{g,d-2}, \qquad a_{dd} \ = \ 3 t_{g,d-1}.
\end{equation*}
It remains show that $\Lambda_k = (s_{g,k}-g)$ holds for all $k = 1,\dots,d-2$. The situation is very similar to the last part of the proof of Theorem \ref{thm:max-vol}. However, as we do not have information about the entries $a_{1d},\dots,a_{(d-1)d}$ of $P$, we need to employ a different strategy. Note that $a_{(d-1)(d-1)}$ is coprime to $g$. Considering again the last Gorenstein form $u_d$ of~$\Delta$, its entry
\begin{equation*}
    u_{d(d-1)} \ = \ \frac{m}{a_{(d-1)(d-1)}} \ = \ \frac{a_{1(d-1)} + \dots + a_{(d-2)(d-1)} - 1}{a_{(d-1)(d-1)}}
\end{equation*}
must be an integer. In particular, $s_{g,k}$ divides $m$ for all $k = 1,\dots,d-2$. Since $a_{l(d-1)}$ is a multiple of $s_{g,k}$ for $l \ne k$, this means that we have
\begin{equation*}
    s_{g,k} \ \mid \ \Lambda_k s_{g,1} \cdots \hat{s}_{g,k} \cdots s_{g,d-2} - 1.
\end{equation*}
As $s_{g,l} = t_{g,l} + 1$ and $s_{g,k} \mid t_{g,l}$ holds for $l > k$, this implies that we have
\begin{equation*}
    s_{g,k} \ \mid \ s_{g,1} \cdots s_{g,k-1} \Lambda_k - 1.
\end{equation*}
Thus there is $B \in \ZZ$ with $B s_{g,k} \ = \ s_{g,1} \cdots s_{g,k-1} \Lambda_k - 1$. Since $P$ is in Hermite normal form, $\Lambda_k$ is in the range $0 \le \Lambda_k < s_{g,k}$. Thus $B$ is at least one, but less than~$s_{g,1} \cdots s_{g,k-1}$. Moreover, we obtain the identity
\begin{equation*}
    s_{g,1} \cdots s_{g,k-1} \Lambda_k \ = \ B s_{g,k} + 1 \ = \ B( t_{g,k} + 1 ) + 1,
\end{equation*}
and since $t_{g,k}$ is a multiple of $s_{g,1}, \dots,s_{g,k-1}$, this equation is only fulfilled if~$s_{g,1} \cdots s_{g,k-1}$ divides $B + 1$. Comparing this to the possible values of $B$, we obtain $B = s_{g,1} \cdots s_{g,k-1} - 1$. Plugging this in for $B$ and solving for $\Lambda_k$, we obtain~$\Lambda_k = s_{g,k} - g$. This shows that $P$ is the second matrix from Theorem \ref{thm:fwps-max-mult}. Finally assume $g$ is odd. We plug in the values for $a_{1d}, \dots,a_{(d-1)d}$ provided in Theorem \ref{thm:fwps-max-mult} (iii) and check that the resulting matrix has primitive columns. This shows that this is a valid choice for $a_{1d}, \dots,a_{(d-1)d}$, which completes the proof.
\end{proof}

\goodbreak

\section{A classification procedure for IP lattice simplices}\label{sec:fwps-classification}

Throughout this section we develop a procedure for the classification of all IP lattice simplices of given dimension and Gorenstein index, see Algorithm~\ref{alg:fullclass}. It is easily adapted to only classify Fano simplices, see Remark \ref{rem:fano-mod}.

\begin{proposition}\label{prop:simp-Pmat-constr}
Fix an integer $d \ge 2$ and $d+2$ positive integers $g, g_0, \dots, g_d$ with~$g~=~\lcm(g_0, \dots, g_d)$. Let $A = (\alpha_0,\dots,\alpha_d) \in \ZZ^{d+1}_{\ge 1}$ a unit fraction partition of $g$, ie.
\begin{equation*}
    \frac{1}{g} \ = \ \frac{1}{\alpha_0} + \dots + \frac{1}{\alpha_d}.
\end{equation*}
Denote by $w = (w_0,\dots,w_d) = Q(A)$ the weight system associated with $A$. Consider the $d \times (d+1)$ integer matrices of the form
\begin{equation*}
    P
    \ := \
    [\, v_0 \ \dots \ v_d\, ]
    \ := \ 
    \left[\begin{array}{ccccc}
        a_{11} & a_{12} & \cdots & a_{1d} & -b_1\\
        0 & a_{22} & \cdots & a_{2d} & -b_2 \\
        \vdots & \ddots & \ddots & \vdots & \vdots \\
        0 & \cdots & 0 & a_{dd} & -b_d \\
    \end{array}\right]
\end{equation*}
such that for all $k = 1, \dots, d$ the entries of $P$ satisfy
\begin{enumerate}
    \item $a_{kk} \in \ZZ_{\ge 1},\ a_{kk} \mid \alpha_{k-1}$,
    \item $0 \le a_{ik} < a_{kk}$ for all $1 \le i < k$,
    \item $b_k w_d = a_{kk} w_{k-1} + \dots + a_{kd}w_{d-1}$.
\end{enumerate}
Let $\Delta := \Delta(P)$ the convex hull of the columns of $P$. Then $\Delta$ is a $d$-dimensional IP lattice simplex whose associated weight system satisfies $Q_\Delta^\red = (w_0,\dots,w_d)$. The~$k$-th Gorenstein form $u_k = (u_{k1},\dots,u_{kd})$ of $\Delta$ is explicitly given by
\begin{equation}\label{eq:gorform}
    u_{kj} \ = \
    \left\{\begin{array}{ll}
        \frac{|w| - w_k}{a_{jj}w_k} - \frac{\sum_{l=1}^{j-1}a_{lj}u_{kl}}{a_{jj}}, & \text{ if } j = k+1, \\
        \frac{- 1 - \sum_{l=1}^{j-1}a_{lj}u_{kl}}{a_{jj}}, & \text{ otherwise.}
    \end{array}\right.
\end{equation}
If each of $g_k u_k$ is a primitive vector in $\ZZ^d$, then $\Delta$ is of Gorenstein index $g$ with local Gorenstein indices $g_k$, where $k = 0,\dots,d$.
\end{proposition}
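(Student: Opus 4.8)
The plan is to check the claimed conclusions one at a time, in the order: (a) $\Delta(P)$ is a $d$-dimensional IP lattice simplex; (b) its reduced weight system is $(w_0,\dots,w_d)$; (c) the formula \eqref{eq:gorform} for the Gorenstein forms is correct; (d) the statement about the Gorenstein index under the primitivity hypothesis. Throughout, write $v_0,\dots,v_d$ for the columns of $P$. First I would observe that the upper triangular block $[v_0\ \dots\ v_{d-1}]$ has determinant $a_{11}\cdots a_{dd}\neq 0$, so $v_0,\dots,v_{d-1}$ are linearly independent; hence $\Delta$ is $d$-dimensional, and its vertices are visibly lattice points, so it is a lattice simplex. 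To see it is IP, use condition (iii): it says precisely that $w_k v_k$ summed over $k$ has vanishing $w_d$-weighted... more carefully, (iii) rearranges to $\sum_{k=0}^{d} w_k v_k = 0$ (the first $d$ coordinates give exactly the $d$ equations $b_k w_d = a_{kk}w_{k-1}+\dots+a_{kd}w_{d-1}$ after noting $v_d = -(b_1,\dots,b_d)^{\mathsf t}$ and $v_j$ has $j$-th coordinate $a_{jj}$ and $i$-th coordinate $a_{ij}$ for $i<j$). Since all $w_k>0$, the origin is a positive combination of the vertices, so $\mathbf 0 \in \mathrm{int}(\Delta)$.

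Next, for (b): by Remark \ref{rem:ws-props} (iii), $Q_\Delta^\red$ is the unique reduced weight system $Q'$ with $\sum q'_i v_i = 0$; we have just exhibited $(w_0,\dots,w_d) = Q(A)$ as a weight system with this property, and by Proposition \ref{prop:ws-ufp} the tuple $Q(A)$ is reduced. Hence $Q_\Delta^\red = (w_0,\dots,w_d)$. For (c): the $k$-th Gorenstein form $u_k$ is characterized by $\langle u_k, v_j\rangle = -1$ for all $j \neq k$. Expanding $\langle u_k, v_j\rangle = \sum_{l=1}^{j} a_{lj} u_{kl}$ (with $a_{jj}$ the diagonal entry) for $j\neq k$, and solving recursively for $u_{kj}$ by increasing $j$, gives exactly the second case of \eqref{eq:gorform}; for the one index $j=k+1$ we instead use that $u_k$ is the facet normal, i.e. it is determined by the remaining linear condition coming from $\sum_i w_i v_i = 0$ together with the already-known entries — equivalently $\langle u_k, \sum_i w_i v_i\rangle = 0$ forces $\langle u_k, v_k\rangle = -(|w|-w_k)/w_k$, and solving the resulting equation for $u_{k,k+1}$ yields the first case. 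This is a direct back-substitution and I would present it compactly.

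Finally, for (d): assuming each $g_k u_k$ is a primitive vector of $\ZZ^d$, the integer $g_k$ is by definition (Definition \ref{def:gorind} (iii)) the $k$-th local Gorenstein index of $\Delta$, since $g_k$ is then the least positive integer clearing the denominators of $u_k$. Then by the Remark following Definition \ref{def:gorind}, $g(\Delta) = \lcm(g_0,\dots,g_d) = g$. The only genuinely delicate point I anticipate is bookkeeping in step (c): making sure the recursion for $u_{kj}$ is well-posed (each $a_{jj}\neq 0$, and the $j=k+1$ branch really does close up consistently with the facet condition rather than over-determining the system). Everything else is linear algebra plus invoking the already-established dictionary between weight systems and unit fraction partitions. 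I would therefore devote most of the written proof to verifying \eqref{eq:gorform} and treat (a), (b), (d) briefly.
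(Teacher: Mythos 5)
Your proposal is correct and follows essentially the same route as the paper's proof: the triangular shape with $a_{kk}\ge 1$ gives a full-dimensional lattice simplex, condition (iii) is exactly $P\cdot w=0$, which places the origin in the interior and identifies $Q_\Delta^{\red}=w$ via the uniqueness statement in Remark \ref{rem:ws-props} (and reducedness of $Q(A)$ from Proposition \ref{prop:ws-ufp}), the entries of $u_k$ are obtained by back-substitution from $\langle u_k,v_j\rangle=-1$ for $j\ne k$ together with the value of $\langle u_k,v_k\rangle$ forced by $P\cdot w=0$, and the final assertion is just the definition of the (local) Gorenstein indices. Only correct the sign slip in your sketch: $\langle u_k,\sum_i w_i v_i\rangle=0$ gives $\langle u_k,v_k\rangle=+\tfrac{|w|-w_k}{w_k}=|w|/w_k-1$, not $-\tfrac{|w|-w_k}{w_k}$, which is what produces the first case of \eqref{eq:gorform}.
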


\begin{proof}
As $P$ is of rank $d$, the polytope $\Delta$ is full-dimensional. Its vertices are precisely the columns $v_0,\dots,v_d$. It is thus a lattice simplex. By condition (iii) we have~$P\cdot w = 0$. With $\beta_k := w_k/|w|$ we can write
\begin{equation*}
\mathbf{0} \ = \ \beta_0 v_0 + \dots + \beta_d v_d,
\end{equation*}
which is a convex combination of $v_0,\dots,v_d$ with non-vanishing coefficients. Thus the origin is contained in the interior of $\Delta$, making it an IP lattice simplex. By Remark \ref{rem:ws-props} (ii) we have $Q_\Delta^\red = w$. Let $u_k = (u_{k1},\dots,u_{kd})$ the $k$-th Gorenstein form of~$\Delta$. Let $1 \le j \le d$. If $j \ne k+1$, then $\bangle{u_k,v_j} = -1$ holds and for $j = k+1$ we have $\bangle{u_k,v_j} = |w|/w_k - 1$. Solving these equations for $u_{kj}$ produces the identities in Equation \ref{eq:gorform}. The last assertion is just the definition of the Gorenstein index and the local Gorenstein indices of $\Delta$.
\end{proof}

\begin{proposition}\label{prop:simp-Pmat-finite}
Let $\Delta$ a $d$-dimensional IP lattice simplex of Gorenstein index~$g$. Then $\Delta \cong \Delta(P)$ holds with a matrix $P$ as provided by Proposition \ref{prop:simp-Pmat-constr}.
\end{proposition}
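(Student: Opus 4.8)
The plan is to realise $\Delta$ in the triangular shape of Proposition~\ref{prop:simp-Pmat-constr} by a single unimodular change of lattice basis, after which conditions (ii) and (iii) are automatic and only the divisibility (i) requires an idea. Let $e_1,\dots,e_d$ denote the standard basis of $\ZZ^d\cong N$, let $v_0,\dots,v_d$ be the vertices of $\Delta$ in an arbitrary order, put $A(\Delta)=(\alpha_0,\dots,\alpha_d)$ and $w:=Q(A(\Delta))$; one has $w=Q_\Delta^\red$ (Propositions~\ref{prop:a-of-delta} and~\ref{prop:ws-ufp}), so $\sum_i w_iv_i=0$ by Remark~\ref{rem:ws-props}~(iii), with all $w_i>0$. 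Since $v_0,\dots,v_d$ span $N_\QQ$ and $w_d\ne0$, the vectors $v_0,\dots,v_{d-1}$ are linearly independent. First I would choose $H\in\GL(d,\ZZ)$ bringing the $d\times d$ matrix $[\,v_0\ \dots\ v_{d-1}\,]$ into upper-triangular Hermite normal form and replace $\Delta$ by $H\Delta$; then $\Delta\cong\Delta(P)$ with $P=[\,v_0\ \dots\ v_d\,]$ of the stated shape, the diagonal entries $a_{11},\dots,a_{dd}$ positive integers and $0\le a_{ik}<a_{kk}$ for $i<k$, so (ii) holds; writing $v_d=(-b_1,\dots,-b_d)$, condition (iii) is exactly the $k$-th row of the relation $Pw=0$, which persists because $H$ is applied to every $v_i$. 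Everything thus reduces to the claim $a_{kk}\mid\alpha_{k-1}$ for $k=1,\dots,d$.

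For this I would argue with the Gorenstein forms $u_0,\dots,u_d$ of $\Delta$, normalised so that $\bangle{u_j,v_i}=-1$ whenever $i\ne j$, and the local Gorenstein indices $g_0,\dots,g_d$: since $g_j\mid g$ and $g_ju_j\in M$, one has $gu_j\in M=\ZZ^d$ for all $j$. Evaluating $u_{k-1}$ on $\sum_i w_iv_i=0$ yields $\bangle{u_{k-1},v_{k-1}}=|w|/w_{k-1}-1=\alpha_{k-1}/g-1$, using $\alpha_{k-1}=g|Q_\Delta|/q_{k-1}=g|w|/w_{k-1}$. Now fix $k\in\{1,\dots,d\}$ and consider the integral linear form $\psi:=gu_{k-1}-gu_d\in\ZZ^d$ (note $k-1<d$). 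Both $u_{k-1}$ and $u_d$ take the value $-1$ on each of $v_0,\dots,v_{k-2}$, so $\psi$ vanishes on the $\QQ$-span of $v_0,\dots,v_{k-2}$; as $[\,v_0\ \dots\ v_{d-1}\,]$ is now upper triangular with non-zero diagonal, that span equals $\QQ e_1+\dots+\QQ e_{k-1}$, whence $\psi(e_i)=0$ for $i<k$. On the other hand $\psi(v_{k-1})=g\bigl((\alpha_{k-1}/g-1)-(-1)\bigr)=\alpha_{k-1}$. Writing $v_{k-1}=a_{kk}e_k+\sum_{i<k}a_{ik}e_i$ we obtain $\alpha_{k-1}=\psi(v_{k-1})=a_{kk}\,\psi(e_k)$, and $\psi(e_k)\in\ZZ$ since $\psi\in\ZZ^d$; hence $a_{kk}\mid\alpha_{k-1}$. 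The case $k=1$ is the same computation with $v_0=a_{11}e_1$ and no lower-order terms.

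This yields a matrix $P$ with $\Delta(P)\cong\Delta$ satisfying (i)--(iii). To confirm that $P$ is genuinely one of the matrices provided by Proposition~\ref{prop:simp-Pmat-constr} for the data $g,g_0,\dots,g_d,A(\Delta)$, one only adds the two routine remarks that $\lcm(g_0,\dots,g_d)=g$ and that each $g_ku_k$ is primitive: the facet of $\Delta$ opposite $v_k$ contains the lattice points $v_j$ with $j\ne k$, so its affine hull is a lattice hyperplane, which forces the minimal integral multiple $g_ku_k$ of $u_k$ to be a primitive lattice vector. I expect the single genuine obstacle to be the divisibility in (i); the passage to Hermite normal form and the checks of (ii) and (iii) are routine, and the primitivity point is standard once the facet-hyperplane observation is in place. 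Incidentally, the computation proving (i) is exactly the one referenced in the proof of Lemma~\ref{lemma:fano-simp-Pmat}~(i).
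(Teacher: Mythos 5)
Your proof is correct and takes essentially the same route as the paper: reduce to Hermite normal form via a unimodular change of basis, then deduce $a_{kk}\mid\alpha_{k-1}$ from the integrality of the $g$-fold Gorenstein forms, your form $\psi=g(u_{k-1}-u_d)$ being a cleaner packaging of the paper's recursive comparison of $u_{k-1}$ with the common coordinates $q_j$ shared by the later Gorenstein forms. Your extra check that each $g_ku_k$ is primitive is correct but not required, since conditions (i)--(iii) alone describe the matrices provided by Proposition~\ref{prop:simp-Pmat-constr}.
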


\begin{proof}
Write $w := Q_\Delta^\red = (w_0,\dots,w_d)$ and let $A = A(\Delta) = (\alpha_0,\dots,\alpha_d)$ the unit fraction partition of $g$ associated with $\Delta$. By Proposition \ref{prop:ws-ufp} the reduced weight systems $w$ and $Q(A(\Delta))$ coincide. Let~$P$ the~$d \times (d+1)$ integer matrix whose columns are the vertices of $\Delta$. By bringing~$P$ in Hermite normal form, we may assume
\begin{equation*}
    P
    \ = \
    [\, v_0 \ \dots \ v_d\, ]
    \ = \
    \left[\begin{array}{ccccc}
        a_{11} & a_{12} & \cdots & a_{1d} & -b_1\\
        0 & a_{22} & \cdots & a_{2d} & -b_2 \\
        \vdots & \ddots & \ddots & \vdots & \vdots \\
        0 & \cdots & 0 & a_{dd} & -b_d \\
    \end{array}\right],
\end{equation*}
where $a_{kk} \in \ZZ_{\ge 1}$ holds for all $k = 1, \dots, d$ as well as $0 \le a_{ik} < a_{kk}$ for all $1 \le i < k$. Resolving $P \cdot w = 0$ for the entries $b_k$, we obtain the identity
\begin{equation*}
    b_{k} w_d \ = \ a_{kk} w_{k-1} + \dots + a_{kd}w_{d-1}
\end{equation*}
It thus remains to show that for all $k$ the diagonal entry $a_{kk}$ divides~$\alpha_{k-1}$. Consider the following sequence of rational numbers
\begin{equation*}
    q_1 \ := \ -\frac{1}{a_{11}}, \qquad q_j \ := \ -\frac{1 + a_{1j}q_1 + \dots + a_{j-1,j}q_{j-1}}{a_{jj}}.
\end{equation*}
Let $k \ge 1$ and let $u_k$ the $k$-th Gorenstein form of $\Delta$. For each $1 \le j \le k$ we have~$\bangle{u_k,v_j} = -1$. Solving this for $u_{kj}$ we get $u_{kj} = q_j$. In particular $g\, q_k$ is an integer. Evaluating $u_{k-1}$ on $v_k$, we obtain
\begin{equation*}
    a_{1k} q_1 + \dots + a_{(k-1)k}q_{k-1} + a_{kk} u_{(k-1)k} \ = \ \bangle{u_{k-1},v_k} \ = \ \frac{|w|}{w_{k-1}} - 1.
\end{equation*}
With the definition of $q_k$, we can rewrite this equation as
\begin{equation*}
    a_{kk} (u_{(k-1)k} - q_k) \ = \ -\frac{|w|}{w_{k-1}}.
\end{equation*}
Note that the $g$-fold of both $u_{(k-1)k}$ and $q_k$ is an integer. Multiplying both sides by~$g\,w_{k-1}$ thus shows that $a_{kk} w_{k-1}$ is a divisor of $g|w| = \alpha_{k-1} w_{k-1}$. Clearing~$w_{k-1}$ on both sides, we see that $a_{kk}$ divides $\alpha_{k-1}$.
\end{proof}

Propositions \ref{prop:simp-Pmat-constr} and \ref{prop:simp-Pmat-finite} provide us with a procedure to enumerate up to isomorphy all IP lattice simplices $\Delta$ with a given constellation of local Gorenstein indices~$(g_0,\dots,g_d)$ and given reduced weight system $w$. The list produced may contain redundancies, ie. matrices $P$ and $P'$ that give isomorphic simplices $\Delta(P)$ and $\Delta(P')$. In practice, we want the list to be redundancy free without having to check each pair of matrices for isomorphy. The solution to this problem is to define a normal form $\NF(P)$ for these matrices $P$, which has the property that two matrices $P$ and $P'$ give isomorphic simplices if and only if their normal forms coincide. The normal form we present in Definition~\ref{def:normal-form} is similar to the PALP normal form for lattice polytopes described in \cite{KrSk04}, see also~\cite{GrKa13}. To fix some notation, if~$B$ is a~$m\times n$ integer matrix with columns $b_1,\dots,b_n$ and $\sigma \in S_n$, then we denote by~$B_\sigma$ the matrix with columns $b_{\sigma(1)},\dots,b_{\sigma(n)}$. Moreover, by $\HNF(B)$ we denote the hermite normal form of $B$.

\begin{definition}\label{def:normal-form}
Let $P$ a $d \times (d+1)$ integer matrix whose columns generate $\QQ^d$ as a convex cone. Let $w = (w_0,\dots,w_d)$ the reduced weight system and $(g_0,\dots,g_d)$ the local Gorenstein indices of the IP lattice simplex $\Delta(P)$. We denote by $S_P$ the subset of $S_{d+1}$ consisting of all permutations $\sigma \in S_{d+1}$ with the following properties:
\begin{enumerate}
    \item
    If $\sigma(i) \le \sigma(j)$ holds, then $w_{\sigma(i)} \ge w_{\sigma(j)}$.
    \item
    If $\sigma(i) \le \sigma(j)$ and $w_{\sigma(i)} = w_{\sigma(j)}$ holds, then $g_{\sigma(i)} \ge g_{\sigma(j)}$.
\end{enumerate}
We define the \emph{normal form} of $P$ as
\begin{equation*}
    \NF(P) \ := \ \min\{ \HNF(P_\sigma); \ \sigma \in S_P \},
\end{equation*}
where the minimum is taken lexicographically, ie. we write the entries of the matrix~$\HNF(P_\sigma) = (h_{ij})_{ij}$ as a list of integers $(h_{11},\dots,h_{1d},h_{21},\dots,h_{(d+1)d})$ and take the lexicographic minimum among those lists.
\end{definition}

\begin{proposition}
For $d \times (d+1)$ integer matrices $P$ and $P'$, whose columns generate $\QQ^d$ as a convex cone, we have $\Delta(P) \cong \Delta(P')$ if and only if their normal forms $\NF(P)$ and $\NF(P')$ coincide.
\end{proposition}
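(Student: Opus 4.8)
The plan is to derive the statement from two facts. First, an isomorphism of IP simplices is, by the notion of isomorphism fixed in Section \ref{sect:ratsimp}, a lattice automorphism of $\ZZ^d$ carrying the vertex set of one simplex onto that of the other; as a simplex isomorphism is determined by the induced vertex bijection, $\Delta(P) \cong \Delta(P')$ holds if and only if there are $U \in \GL(d,\ZZ)$ and $\tau \in S_{d+1}$ with $U P = P'_\tau$, i.e. $U$ sends the columns of $P$ bijectively onto those of $P'$, permuting them by $\tau$. Second, since the columns of $P$ and $P'$ span $\QQ^d$, both matrices have rank $d$, so their (row) Hermite normal form is the unique representative of the left $\GL(d,\ZZ)$-orbit: $\HNF(B) = \HNF(B'')$ if and only if $B'' = UB$ for some $U \in \GL(d,\ZZ)$. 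We additionally use that reordering columns does not change the convex hull, that $\HNF(UB) = \HNF(B)$ for $U \in \GL(d,\ZZ)$, and that $S_P \neq \emptyset$, so that $\NF(P)$ is well defined.

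For the implication ``$\NF(P) = \NF(P') \Rightarrow \Delta(P) \cong \Delta(P')$'', write the common value as $\HNF(P_\sigma) = \HNF(P'_{\sigma'})$ for some $\sigma \in S_P$ and $\sigma' \in S_{P'}$. By the second fact there is $U \in \GL(d,\ZZ)$ with $P'_{\sigma'} = U P_\sigma$, and applying $U$ to $\Delta(P) = \Delta(P_\sigma)$ yields $\Delta(P'_{\sigma'}) = \Delta(P')$; hence the two simplices are isomorphic.

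For the converse, fix $\varphi \in \GL(d,\ZZ)$ and $\tau \in S_{d+1}$ with $\varphi P = P'_\tau$. The decisive observation is that $\tau$ matches the data defining $S_P$ and $S_{P'}$: from $|\det\varphi| = 1$ and Remark \ref{rem:ws-props} (iv) we get $w_i = w'_{\tau(i)}$ for the reduced weight systems, and the dual lattice isomorphism induced by $\varphi$ carries the Gorenstein form of $\Delta(P)$ opposite the $i$-th vertex to the Gorenstein form of $\Delta(P')$ opposite the $\tau(i)$-th vertex, whence $g_i = g'_{\tau(i)}$ for the local Gorenstein indices. Consequently, for every $\sigma \in S_P$ the columns of $P_\sigma$ and of $P'_{\tau\sigma}$ carry, position by position, the same weight and the same local Gorenstein index; thus $\tau\sigma$ satisfies conditions (i) and (ii) of Definition \ref{def:normal-form} for $P'$, i.e. $\tau\sigma \in S_{P'}$, and $\varphi P_\sigma = P'_{\tau\sigma}$ gives $\HNF(P_\sigma) = \HNF(P'_{\tau\sigma})$. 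This proves $\{\HNF(P_\sigma) \,;\, \sigma \in S_P\} \subseteq \{\HNF(P'_{\sigma'}) \,;\, \sigma' \in S_{P'}\}$; the reverse inclusion follows in the same way from $\varphi^{-1}$ and $\tau^{-1}$. Equal sets have equal lexicographic minima, so $\NF(P) = \NF(P')$.

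The only step that is not pure bookkeeping is the $\tau$-equivariance $g_i = g'_{\tau(i)}$: one checks that the induced dual lattice isomorphism sends $u_i$ to $u'_{\tau(i)}$ (both being the linear form equal to $-1$ on the respective opposite facet) and, being an isomorphism of the dual lattices, preserves the least positive integer clearing denominators. This compatibility of the vertex permutation $\tau$ with the weight ordering and the local Gorenstein index ordering is exactly what makes conjugation by $\tau$ carry $S_P$ onto $S_{P'}$, and is therefore the heart of the forward implication and the main point to get right; everything else reduces to the two standard facts above.
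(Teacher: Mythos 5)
Your proposal is correct and follows essentially the same route as the paper: an isomorphism yields a unimodular matrix together with a column permutation, that permutation is compatible with the weight and local Gorenstein index data and hence identifies $S_P$ with $S_{P'}$ (the paper's ``quick comparison'' $S_P=\sigma S_{P'}$), so the sets $\{\HNF(P_\sigma)\}$ coincide and the lexicographic minima agree, while the converse uses that the Hermite normal form classifies left $\GL(d,\ZZ)$-orbits. You merely spell out the details the paper leaves implicit, so no further comment is needed.
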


\begin{proof}
Assume $\Delta(P) \cong \Delta(P')$ holds. Then there is a permutation $\sigma \in S_{d+1}$ and a $d \times d$ unimodular matrix $S$ such that $S \cdot P_\sigma = P'$ holds. A quick comparison shows~$S_P = \sigma S_{P'}$. Thus the sets of hermite normal forms, among which the lexicographic minimum is chosen, coincide. We obtain $\NF(P) = \NF(P')$. On the other hand, if~$\NF(P) = \NF(P')$ holds, then there are $\sigma, \sigma' \in S_{d+1}$ and unimodular~$d\times d$ matrices~$S$ and $S'$ with $S\cdot P_\sigma = S' \cdot P'_{\sigma'}$. Thus $\Delta(P)$ and $\Delta(P')$ are isomorphic.
\end{proof}

We translate Proposition \ref{prop:simp-Pmat-constr} into a classification procedure realized in Algorithm~\ref{alg:subclass}. As input it takes a unit fraction partition $A = (\alpha_0,\dots,\alpha_d)$ of $g$ and a tuple $(g_0,\dots,g_d)$ of positive integers with $g = \lcm(g_0,\dots,g_d)$. It then produces a list of matrices $P$ corresponding to IP lattice simplices $\Delta(P)$ with associated unit fraction partition $A$ and local Gorenstein indices $(g_0,\dots,g_d)$. This list is complete, ie. every IP lattice simplex $\Delta$ with associated unit fraction partition $A$ and local Gorenstein indices $(g_0,\dots,g_d)$ is isomorphic to some $\Delta(P)$ with $P$ from that list, and the list is redundancy free, ie. two different matrices $P$ and $P'$ from the list give non-isomorphic simplices $\Delta(P)$ and $\Delta(P')$.

\goodbreak

\begin{algrthm}\label{alg:subclass}
ClassifySimp( $A$, $[g_0,\dots,g_d]$ )
\hfill\hrule
\begin{algorithmic}
\Require  -- A unit fraction partition $A = [\alpha_0,\dots,\alpha_d] \in \ZZ^{n+1}_{\ge 1}$ of $g$,
\Statex  -- A list of positive integers $[g_0,\dots,g_d] \in \ZZ^{n+1}_{\ge 1}$ with $g = \lcm(g_0,\dots,g_d)$
\end{algorithmic}
\hrule
\begin{algorithmic}[1]
\State $L \gets [\ ]$
\State $w \gets Q(A)$
\State $\text{div}_k \gets \{ \text{Divisors of } \alpha_k\}$ for $k = 0,\dots,d-1$
\ForAll{ $(a_{11},a_{12},a_{22},\dots,a_{1d},\dots,a_{dd})$ with $a_{kk} \in \text{div}_{k-1}$ and $ 0 \le a_{ik} < a_{kk}$}
    \State $b_k \gets (a_{kk} w_{k-1} + \dots + a_{kd} w_{d-1}) / w_d$ for $k = 1,\dots,d$
    \State
    $
        P
        \gets
        \left[\begin{array}{ccccc}
            a_{11} & a_{12} & \cdots & a_{1d} & -b_1\\
            0 & a_{22} & \cdots & a_{2d} & -b_2 \\
            \vdots & \ddots & \ddots & \vdots & \vdots \\
            0 & \cdots & 0 & a_{dd} & -b_d \\
        \end{array}\right]
    $
    \State $u_k \gets k$-th linear form as in Equation \ref{eq:gorform} for $k = 0,\dots,d$
    \If{$b_k \in \ZZ$ \textbf{and} $g_k u_k$ is a primitive point in $\ZZ^d$ \textbf{and} $\NF(P) \not\in L$}
        \State add $\NF(P)$ to $L$
    \EndIf
\EndFor
\State \Return $L$
\end{algorithmic}
\end{algrthm}

With Algorithm \ref{alg:subclass} we can classify the $d$-dimensional IP lattice simplices with a fixed constellation of local Gorenstein indices and fixed unit fraction partition. To obtain the classification of all $d$-dimensional IP lattice simplices of Gorenstein index $g$, we thus need a list of all length $d+1$ unit fraction partitions of $g$. This is done by the following Algorithm, which takes as input a reduced positive rational number~$p/q$ and a natural number $n \ge 1$ and produces a list of all ordered unit fraction partitions $\alpha_1 \le \dots \le \alpha_n$ of $p/q$ of length $n$. For two unit fraction partitions~$A$ and~$A'$ we write $A \sim A'$, if they only differ by order.

\begin{algrthm}\label{alg:ufp}
UFP( $p/q$, $n$ )
\hfill\hrule
\begin{algorithmic}
\Require  -- A reduced positive rational $p/q \in \QQ_{>0}$
\Statex -- A positive integer $n \in \ZZ_{\ge 1}$
\end{algorithmic}
\hrule
\begin{algorithmic}[1]
\If{ $n = 1$ \textbf{and} $p = 1$ }
    \State \Return $[(p/q)]$
\EndIf
\State $L \gets [\ ]$
\For{$k = \lceil q/p \rceil, \dots, \lfloor nq/p \rfloor$}
    \State $L_2 \gets \text{UFP}(p/q-1/k, n-1)$
    \ForAll{$(1/\alpha_2,\dots,1/\alpha_n) \in L_2$}
        \If{$(1/k, 1/\alpha_1,\dots,1/a_n) \not\sim A'$ for all $A' \in L$}
            \State sort $(1/k, 1/\alpha_1,\dots,1/a_n)$ decreasingly and add it to $L$
        \EndIf
    \EndFor
\EndFor
\State \Return $L$
\end{algorithmic}
\end{algrthm}

The following Algorithm takes as input integers $d\ge 2$ and $g \ge 1$ and performs the classification of all $d$-dimensional IP lattice simplices of Gorenstein index $g$. As in the case of Algorithm \ref{alg:subclass}, the output list of matrices $P$ is complete and redundancy-free.

\begin{algrthm}\label{alg:fullclass} ClassifyAllSimp( $d$, $g$ )
\hfill\hrule
\begin{algorithmic}
\Require -- An integer $d \ge 2$
\Statex -- An integer $g \ge 1$
\end{algorithmic}
\hrule
\begin{algorithmic}[1]
\State $L \gets [\ ]$
\ForAll{$ A \in \text{UFP}(1/g, d+1)$}
    \State $L_2 \gets [\ ]$
    \ForAll{$(g_0,\dots,g_d)$ with $g_k \mid g$ such that $g = \lcm(g_0,\dots,g_d)$}
        \ForAll{$P \in \text{ClassifySimp}( A, (g_0,\dots,g_d))$}
            \If{$P \not\in L_2$}
                \State add $P$ to $L_2$
            \EndIf
        \EndFor
    \EndFor
    \State Append $L_2$ to $L$
\EndFor
\State \Return $L$
\end{algorithmic}
\end{algrthm}

\begin{remark}\label{rem:fano-mod}
To classify only Fano simplices of given dimension $d$ and Gorenstein index $g$, we perform the following two modifications:
\begin{enumerate}
    \item
    In Algorithm \ref{alg:subclass} line $6$ we consider those matrices $P$ whose columns are all primitive vectors in $\ZZ^d$.

    \item
    In Algorithm \ref{alg:fullclass} line $2$ we only loop over well-formed unit fraction partitions of $g$, see Remark \ref{rem:ws-props} (ii) and Proposition \ref{prop:ws-ufp}.
\end{enumerate}

\end{remark}

\goodbreak

\begin{bibdiv}
\begin{biblist}

\bib{ArDeHaLa15}{book}{
   author={Arzhantsev, Ivan},
   author={Derenthal, Ulrich},
   author={Hausen, Jürgen},
   author={Laface, Antonio},
   title={Cox rings},
   series={Cambridge Studies in Advanced Mathematics},
   volume={144},
   publisher={Cambridge University Press, Cambridge},
   date={2015},
   pages={viii+530},
   isbn={978-1-107-02462-5},
   review={\MR{3307753}},
}

\bib{AvKaLeNi21}{article}{
  title={Sharp bounds on fake weighted projective spaces with canonical singularities},
  author={Averkov, G.},
  author={Kasprzyk, A.},
  author={Lehmann, M.},
  author={Nill, B.},
  year={2021},
  eprint={arXiv:2105.09635},
}

\bib{AvKrNi15}{article}{
  title={Largest integral simplices with one interior integral point: Solution of Hensley's conjecture and related results},
  author={Averkov, G.},
  author={Krümpelmann, J.},
  author={Nill, B.},
  journal={Advances in Mathematics},
  volume={274},
  pages={118--166},
  year={2015},
  publisher={Elsevier},
}

\bib{Bae22}{article}{
   author={B{\"a}uerle, Andreas},
   title={Sharp degree bounds for fake weighted projective spaces},
   journal={arXiv preprint arXiv:2207.01709},
   year={2022}
}

\bib{Bae23}{webpage}{
   author={B{\"a}uerle, Andreas},
   title={Classification of Fano simplices},
   url={https://github.com/abaeuerle/fano-simplices},
}

\bib{Ba21}{article}{
   author={Balletti, Gabriele},
   title={Enumeration of lattice polytopes by their volume},
   journal={Discrete \& Computational Geometry},
   volume={65},
   number={4},
   pages={1087--1122},
   year={2021},
   publisher={Springer}
}

\bib{Ju17}{article}{
    title={Julia: A fresh approach to numerical computing},
    author={Bezanson, Jeff},
    author={Edelman, Alan},
    author={Karpinski, Stefan},
    author={Shah, Viral B},
    journal={SIAM Review},
    volume={59},
    number={1},
    pages={65--98},
    year={2017},
    publisher={SIAM},
    doi={10.1137/141000671},
    url={https://epubs.siam.org/doi/10.1137/141000671}
}

\bib{BoBo92}{article}{
  title={Singular toric Fano varieties},
  author={Borisov, A. A.},
  author={Borisov, L. A.},
  journal={Mat. Sb.},
  volume={183},
  number={2},
  pages={134--141},
  year={1992}
}

\bib{Co02}{article}{
   author={Conrads, H.},
   title={Weighted projective spaces and reflexive simplices},
   journal={Manuscripta Math.},
   volume={107},
   number={2},
   pages={215--227},
   year={2002},
}

\bib{CoLiSch11}{book}{
   author={Cox, D. A.},
   author={Little, J. B.},
   author={Schenck, H. K.},
   title={Toric varieties},
   series={Graduate Studies in Mathematics},
   volume={124},
   publisher={American Mathematical Society, Providence, RI},
   date={2011},
   pages={xxiv+841},
}

\bib{Da09}{article}{
   author={Dais, Dimitrios I},
   title={Classification of toric log Del Pezzo surfaces having Picard number 1 and index $\le$ 3},
   journal={Results in Mathematics},
   volume={54},
   pages={219--252},
   year={2009},
   publisher={Springer}
}

\bib{Fu93}{book}{
   author={Fulton, W.},
   title={Introduction to toric varieties},
   series={Annals of Mathematics Studies},
   volume={131},
   note={The William H. Roever Lectures in Geometry},
   publisher={Princeton University Press, Princeton, NJ},
   date={1993},
   pages={xii+157},
   isbn={0-691-00049-2},
}

\bib{GrKa13}{article}{
  title={Normal forms of convex lattice polytopes},
  author={Grinis, Roland},
  author={Kasprzyk, Alexander},
  journal={arXiv preprint arXiv:1301.6641},
  year={2013}
}

\bib{HaeHaHaSpr22}{article}{
   author={Hättig, Daniel},
   author={Hafner, Beatrice},
   author={Hausen, Jürgen},
   author={Springer, Justus},
   title={Del Pezzo surfaces of Picard number one admitting a torus action},
   journal={arXiv preprint arXiv:2207.14790v1},
   year={2022}
   }

\bib{IgSa21}{article}{
   author={Iglesias-Valino, Oscar},
   author={Santos, Francisco},
   title={The complete classification of empty lattice 4-simplices},
   journal={Revista matem{\'a}tica iberoamericana},
   volume={37},
   number={6},
   pages={2399--2432},
   year={2021}
}

\bib{IzKu95}{article}{
   author={Izhboldin, O.},
   author={Kurliandchik, L.},
   title={Unit fractions},
   journal={Translations of the American Mathematical Society-Series 2},
   volume={166},
   pages={193--200},
   year={1995},
   publisher={Providence [etc.] American Mathematical Society, 1949-}
}

\bib{Ka10}{article}{
   author={Kasprzyk, Alexander},
   title={Canonical toric Fano threefolds},
   journal={Canadian Journal of Mathematics},
   volume={62},
   number={6},
   pages={1293--1309},
   year={2010},
   publisher={Cambridge University Press}
   }

\bib{KrSk00}{article}{
   author={Kreuzer, Maximilian},
   author = {Skarke, Harald},
   title = {Complete classification of reflexive polyhedra in four dimensions},
   Journal = {Adv. Theor. Math. Phys.},
   Volume = {4},
   Number = {6},
   Pages = {1209--1230},
   Year = {2000},
   }

\bib{KrSk04}{article}{
  title={PALP: a package for analysing lattice polytopes with applications to toric geometry},
  author={Kreuzer, Maximilian},
  author={Skarke, Harald},
  journal={Computer Physics Communications},
  volume={157},
  number={1},
  pages={87--106},
  year={2004},
  publisher={Elsevier}
}

\bib{Ku08}{article}{
  title={From the Mahler conjecture to Gauss linking integrals},
  author={Kuperberg, Greg},
  journal={Geometric And Functional Analysis},
  volume={18},
  number={3},
  pages={870--892},
  year={2008},
  publisher={Springer}
}

\bib{Ni07}{article}{
    author={Nill, Benjamin},
    title={Volume and Lattice Points of Reflexive Simplices},
    journal={Discrete Comput. Geom.},
    volume={37},
    date={2007},
    pages={301--320},
}

\bib{OEIS-syl}{webpage}{
   author={OEIS Foundation Inc. (2023)},
   title={ Entry A000058 in The On-Line Encyclopedia of Integer Sequences},
   url={https://oeis.org/A000058},
}

\end{biblist}
\end{bibdiv}

\end{document}